

\documentclass[11pt,reqno]{article}
\usepackage{amsmath,amsfonts,amsthm,amssymb}
\usepackage{graphics}
\usepackage[all]{xy}
\usepackage{color}
\usepackage{multicol}
\usepackage{amscd}


  \theoremstyle{plain}
\newtheorem{theorem}{Theorem}[section]
\newtheorem{proposition}[theorem]{Proposition}

\newtheorem{corollary}[theorem]{Corollary}

\theoremstyle{definition}
\newtheorem{definition}[theorem]{Definition}
\newtheorem{example}[theorem]{Example}

\xyoption{knot} \UseComputerModernTips \knottips{FF}
\def\Bracket#1{\mathord{\Big\langle\,~ \raise8pt\xybox{0;/r1.3pc/:#1}\,
~\Big\rangle}}



\begin{document}

\title{Computations of quandle cocyle invariants of surface-links using marked graph diagrams}

\author{\it\small Seiichi Kamada, Jieon Kim and Sang Youl Lee
\\{}\\{\it\small Department of Mathematics,   Osaka City University,}\\
{\it\small Sugimoto, Sumiyoshi-ku, Osaka 558-8585, Japan}\\{\it\small E-mail: skamada@sci.osaka-cu.ac.jp,}
\\{\it\small Department of Mathematics, Graduate School of Natural Sciences,}\\{\it\small Pusan National University,}\\
{\it\small Busan 609-735, Republic of Korea}\\{\it\small E-mail: jieonkim@pusan.ac.kr}
\\{\it\small Department of Mathematics, Pusan National University,}\\
{\it\small Busan 609-735, Republic of Korea}\\{\it\small E-mail: sangyoul@pusan.ac.kr}}

\maketitle

\begin{abstract}
 By using the cohomology theory of quandles, quandle cocycle invariants and shadow quandle cocycle invariants
are defined for oriented links and surface-links via broken surface diagrams.  By using symmetric quandles, symmetric quandle cocycle invariants are also defined for unoriented links and surface-links via broken surface diagrams.
A marked graph diagram is a link diagram possibly with $4$-valent vertices equipped with markers. S. J. Lomonaco, Jr. and K. Yoshikawa  introduced a method of describing surface-links by using marked graph diagrams. In this paper, we give interpretations of these quandle cocycle invariants in terms of marked graph diagrams, and introduce a method of computing them from marked graph diagrams.
\end{abstract}


\section{Introduction}\label{sect-intr}

A {\it surface-link}  is a closed 2-manifold smoothly (or piecewise linearly and locally flatly) embedded in the Euclidian $4$-space $\mathbb R^4$.  
Two surface-links $\mathcal L$ and $\mathcal L'$ are said to be {\it equivalent} if there exists an orientation preserving homeomorphism $h:\mathbb R^4\rightarrow \mathbb R^4$ such that $h(\mathcal L)=\mathcal L'.$ When $\mathcal L$ and $\mathcal L'$ are oriented, it is assumed that $h|_{\mathcal L}:\mathcal L\rightarrow \mathcal L'$ is also an orientation preserving homeomorphism.

A {\it broken surface diagram} of a surface-link is a projection image in $\mathbb R^3$ with over/under sheet information at each double point curve. It is known that two broken surface diagrams present equivalent surface-links if and only if they are related by a finite sequence of Roseman moves (cf. \cite{Ro}).  

A marked graph diagram is a link diagram possibly with $4$-valent vertices equipped with markers. S. J. Lomonaco, Jr. \cite{Lo} and K. Yoshikawa \cite{Yo} introduced a method of describing surface-links by using marked graph diagrams. Yoshikawa introduced local moves on marked graph diagrams, which are so-called {\it Yoshikawa moves}. Two marked graph diagrams present equivalent surface-links if and only if they are related by a finite sequence of Yoshikawa moves (\cite{KK,KJL2,Sw}). So one can use marked graph diagrams for studying surface-links and their invariants (cf. \cite{As,JKL,JKaL,KJL,KJL2,Le2,Le4,Le1,So}).

A {\it quandle} is a set $X$ with a binary operation $\ast:X\times X\rightarrow X$ satisfying certain conditions derived from Reidemeister moves for classical link diagrams (\cite{Jo,Ma}). By using the cohomology theory of quandles  (\cite{CJKLS, CJKS01b, FeRoSa1,FeRoSa2,Fl,Gr}), {\it quandle cocycle invariants} and {\it shadow quandle cocycle invariants} 
are defined for oriented links and surface-links via broken surface  diagrams (\cite{CKS, CJKLS, CJKS01a}).  On the other hand, by using symmetric quandles, {\it symmetric quandle cocycle invariants} are also defined for unoriented links and surface-links via broken surface diagram (\cite{Ka2,KO}). These invariants for surface-links are defined as state-sums over all quandle colorings of sheets and corresponding Boltzman weights that are evaluations of a cocycle at triple points in broken surface diagrams.

The aim of this paper is to interpret of these quandle cocycle invariants in terms of marked graph diagrams, and introduce a method of computing the quandle cocycle invariants from marked graph diagrams.

This paper is organized as follows: In Section \ref{sect-mgd}, we prepare some preliminaries about broken surface diagrams and marked graph diagrams. Section \ref{sect-qcoc} contains a review of quandle cocycle invariants of oriented surface-links. In Section \ref{sect-qcocm}, we describe quandle cocycle invariants via marked graph diagrams and give a method of  computing the quandle cocycle invariants from marked graph diagrams. Section \ref{sect-sqcoc} contains shadow colorings and shadow quandle cocycle invariants of oriented surface-links. In Section \ref{sect-sqcocm}, we describe how to compute shadow quandle cocycle invariants from marked graph diagrams. In Section \ref{sect-syqcoc}, we recall symmetric quandles and symmetric quandle cocycle invariants of unoriented surface-links. Section \ref{sect-syqcocm} is devoted to giving a method of computing symmetric quandle cocycle invariants from marked graph diagrams.


\section{Marked graph diagrams of surface-links}
\label{sect-mgd}

In this section, we recall broken surface diagrams and marked graph diagrams presenting surface-links.

Let $\mathcal L$ be a surface-link.  By deforming $\mathcal L$ by an ambient isotopy of $\mathbb{R}^4$ if necessary, we may assume that the restriction map $q|_{\mathcal L} : {\mathcal L} \to  \mathbb{R}^3$ is a general position map, where $q:\mathbb{R}^4\rightarrow\mathbb{R}^3$ denotes the projection 
$(x, y, z, t) \mapsto (x, y, t)$.  Along the double point curves, one of the sheets (called the {\it over-sheet}) lies above the other ({\it under-sheet}) with respect to the $z$-coordinate. The under-sheets are coherently broken in the projection. The union $\mathcal B$ of such broken surfaces is called a {\it broken surface diagram} of $\mathcal L$.  
 When $\mathcal L$ is an oriented surface-link, we assume that the sheets of are co-oriented such that the pair (orientation, co-orientation) matches the given (right-handed) orientation of $\mathbb{R}^3.$ In \cite{Ro}, D. Roseman introduced seven moves of broken surface diagrams, called {\it Roseman moves}.  Two surface-links are equivalent if and only if their broken surface diagrams are related by a finite sequence of Roseman moves. For more details, see \cite{CS,Ro}.

\bigskip

A {\it marked graph} is a spatial graph $G$ in $\mathbb R^3$ which satisfies the following:
\begin{itemize}
  \item [(1)] $G$ is a finite regular graph with $4$-valent vertices, say $v_1, v_2, . . . , v_n$.
  \item [(2)] Each $v_i$ is a rigid vertex; that is, we fix a rectangular neighborhood $N_i$ homeomorphic to $\{(x, y)|-1 \leq x, y \leq 1\},$ where $v_i$ corresponds to the origin and the edges incident to $v_i$ are represented by $x^2 = y^2$.
  \item [(3)] Each $v_i$ has a {\it marker}, which is the interval on $N_i$ given by  $\{(x, 0)|-1 \leq x \leq 1\}$.
\end{itemize}

Two marked graphs are said to be {\it equivalent} if they are ambient isotopic in $\mathbb R^3$ with keeping the rectangular neighborhoods and markers.

An {\it orientation} of a marked graph $G$ is a choice of an orientation for each edge of $G$ such that every vertex in $G$ looks like 
\xy (-5,5);(5,-5) **@{-}, 
(5,5);(-5,-5) **@{-}, 
(3,3.2)*{\llcorner}, 
(-3,-3.4)*{\urcorner}, 
(-2.5,2)*{\ulcorner},
(2.5,-2.4)*{\lrcorner}, 
(3,-0.2);(-3,-0.2) **@{-},
(3,0);(-3,0) **@{-}, 
(3,0.2);(-3,0.2) **@{-}, 
\endxy or 
\xy (-5,5);(5,-5) **@{-}, 
(5,5);(-5,-5) **@{-},  
(2.5,2)*{\urcorner}, 
(-2.5,-2.2)*{\llcorner}, 
(-3.2,3)*{\lrcorner},
(3,-3.4)*{\ulcorner},
(3,-0.2);(-3,-0.2) **@{-},
(3,0);(-3,0) **@{-}, 
(3,0.2);(-3,0.2) **@{-}, 
\endxy.  
A marked graph $G$ is said to be 
{\it orientable} if it admits an orientation. Otherwise, it is said to be {\it non-orientable}. 
Figure~\ref{fig-nori-mg} shows an oriented marked graph and a non-orientable marked graph.  
Marked graphs can be described by  diagrams on $\mathbb R^2$ with some $4$-valent vertices equipped with markers. 

\begin{figure}[ht]
\begin{center}
\resizebox{0.5\textwidth}{!}{%
  \includegraphics{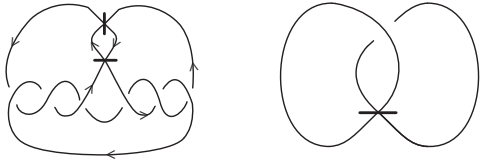}}
\caption{Marked graphs}\label{fig-nori-mg}
\end{center}
\end{figure}

A surface-link $\mathcal L$ in $\mathbb R^4 =\mathbb R^3 \times \mathbb R$ can be described in terms of its {\it cross-sections} $\mathcal L_t=\mathcal L \cap \mathbb R^3\times \{t\}$,  $ t \in \mathbb R$ (cf. \cite{Fox}).  Let $p:\mathbb R^4 \to \mathbb R$ be the projection given by $p(x_1, x_2, x_3, x_4)=x_4$, and we denote by 
 $p_{\mathcal L} : \mathcal L \to  \mathbb R$ the restriction to ${\mathcal L}$.  
 It  is  known (\cite{KSS,Kaw,Lo}) that any surface-link $\mathcal L$ is equivalent to a surface-link $\mathcal L'$, called a {\it hyperbolic splitting} of $\mathcal L$,
such that
the projection $p_{\mathcal L'}: \mathcal L' \to \mathbb R$ satisfies that 
all critical points are non-degenerate, 
all the index 0 critical points (minimal points) are in $\mathbb R^3\times \{-1\}$, 
all the index 1 critical points (saddle points) are in $\mathbb R^3\times \{0\}$, and 
all the index 2 critical points (maximal points) are in $\mathbb R^3\times \{1\}$.

Let $\mathcal L$ be a surface-link and let ${\mathcal L'}$ be a hyperbolic splitting of $\mathcal L.$
The cross-section $\mathcal L'_0=\mathcal L'\cap \mathbb R^3 \times \{0\}$ at $t=0$ is a $4$-valent graph in $\mathbb R^3\times \{0\}$. We give a marker at each $4$-valent vertex (saddle point) that indicates how the saddle point opens up above as illustrated in Figure~\ref{sleesan2:fig1}. The resulting marked graph $G$ is called a {\it marked graph} presenting $\mathcal L$. As usual, $G$ is described by a diagram $\Gamma$ on $\mathbb R^2$ which is a generic projection on $\mathbb R^2$ with over/under crossing information for each double point  such that the restriction to a rectangular neighborhood of each marked vertex is an embedding. Such a diagram is called a {\it marked graph diagram} or a {\it ch-diagram} (cf. \cite{So}) presenting $\mathcal L$. 

When $\mathcal L$ is an oriented surface-link, we assume that 
$\mathcal L'_0$ has the induced orientation as the boundary of the oriented surface $\mathcal L' \cap (\mathbb R^3 \times (-\infty, 0])$. 

\begin{figure}[h]
\begin{center}
\resizebox{0.60\textwidth}{!}{%
  \includegraphics{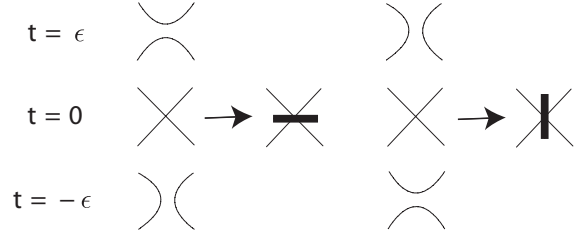} }
\caption{Marking of a vertex} \label{sleesan2:fig1}
\end{center}
\end{figure}

Let $\Gamma$ be a marked graph diagram and $\Gamma_0$ the singular link diagram obtained from $\Gamma$ by removing all markers. Let $V(\Gamma)=\{v_1,v_2,\ldots,v_n\}$ be the set of all vertices of $\Gamma$.  For each $i$ $(i=1,\ldots, n),$ consider four points $v_i^1,v_i^2,v_i^3$, and $v_i^4$ on $\Gamma$ in a neighborhood of $v_i$ as in Figure \ref{fig-ver}. We define
 \begin{align*}
\Gamma_+=\bigl[\Gamma_0\setminus \overset{n}{\underset{i=1}{\cup}}\bigl(\overset{4}{\underset{j=1}{\cup}}|v_i,v_i^j|\bigr)\bigr]\cup\bigl[\overset{n}{\underset{i=1}{\cup}}\bigl(|v_i^1,v_i^2|\cup|v_i^3,v_i^4|\bigr)\bigr],
\end{align*}
which is called the {\it positive resolution} of $\Gamma,$ and
 \begin{align*}
\Gamma_-=\bigl[\Gamma_0\setminus \overset{n}{\underset{i=1}{\cup}}\bigl(\overset{4}{\underset{j=1}{\cup}}|v_i,v_i^j|\bigr)\bigr]\cup\bigl[\overset{n}{\underset{i=1}{\cup}}\bigl(|v_i^1,v_i^3|\cup|v_i^2,v_i^4|\bigr)\bigr],
\end{align*}
the {\it negative resolution} of $\Gamma$, where $|v,w|$ is the line segment connecting $v$ and $w$.  
When both resolutions $\Gamma_-$ and $\Gamma_+$ are diagrams of trivial links, we call 
$\Gamma$  {\it admissible}. 

\begin{figure}[ht]
\begin{center}
\resizebox{0.18\textwidth}{!}{%
  \includegraphics{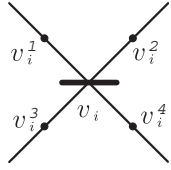} }
\caption{$v_i^1,v_i^2,v_i^3$, and $v_i^4$}\label{fig-ver}
\end{center}
\end{figure}

When $\Gamma$ is admissible, we construct a surface-link as follows (cf.  \cite{Ka2,KSS,Kaw,Yo}).
Let $L_0$ denote  a graph in $\mathbb R^3$ whose diagram is $\Gamma_0$.  Let $w_i^j$ and $w_i$ be points on $L_0$ such that $\pi(w_i^j)=v_i^j,$ $\pi(w_i)=v_i,$ respectively, where $\pi:\mathbb R^3\rightarrow \mathbb R^2$ is the projection $(x,y,z) \mapsto (x,y)$. 
For each $t \in [0,1]$, let $w_i^j(t)$ be the point $ (1-t) w_i + t w_i^j \in \mathbb{R}^3$.  

For each $t \in [0,1]$, let 
$L_t^{+}$ be a link defined by
$$L_t^{+} = \bigl[ L_0 \setminus \overset{n}{\underset{i=1}{\cup}} \bigl( \overset{4}{\underset{j=1}{\cup}} 
| w_i, w_i^j(t)  |   \bigr) \bigr]   \cup \bigl[ \overset{n}{\underset{i=1}{\cup}} \bigl( 
| w_i^1(t), w_i^2(t) | \cup
| w_i^3(t), w_i^4(t) | \bigr)\bigr],$$ 
and for each $t \in [-1,0]$, let $L_-$ be a link defined by 
$$L_{t}^{-} = \bigl[ L_0 \setminus \overset{n}{\underset{i=1}{\cup}} \bigl(\overset{4}{\underset{j=1}{\cup}}
 | w_i, w_i^j(-t) | \bigr)\bigr] 
\cup \bigl[ \overset{n}{\underset{i=1}{\cup}} \bigl( 
| w_i^1(-t), w_i^3(-t) | \cup 
| w_i^2(-t), w_i^4(-t) | \bigr)\bigr]. $$ 
 
Put $L_+ = L_1^{+}$ and $L_- = L_{-1}^{-}$.  Then $L_+$ and $L_-$ have diagrams $\Gamma_+$ and $\Gamma_-$, respectively.  
Let $B_1^{+},\ldots,B_\mu^+$ be mutually disjoint 2-disks in $\mathbb{R}^3$ with 
$\partial(B_1^{+}\cup\cdots\cup B_\mu^+)=L_+$, and let 
$B_1^{-},\ldots,B_\lambda^-$ be mutually disjoint 2-disks  in $\mathbb{R}^3$ with 
$\partial(B_1^{-}\cup\cdots\cup B_\lambda^-)=L_-$.  

Let $F(\Gamma)$ be a surface-link in $\mathbb{R}^4=\mathbb R^3\times \mathbb R$ defined by
\begin{align*}
F(\Gamma) = \/ 
&(B_1^{-}\cup\cdots\cup B_\lambda^-)\times\{-2\} 
\cup L_- \times(-2,-1)\\
&\cup (\cup_{t\in[-1,0)}L_t^-\times\{t\})
\cup L_0 \times\{0\} 
\cup (\cup_{t\in(0,1]}L_t^+\times\{t\})\\
&\cup L_+ \times(1,2) 
\cup (B_1^{+}\cup\cdots\cup B_\mu^+)\times \{2\}. 
\end{align*}

We say that $F(\Gamma)$ is a {\it surface-link associated to $\Gamma$}.  It is uniquely determined from $\Gamma$ up to equivalence (see \cite{KSS}).


A surface-link $\mathcal L$ is said to be {\it presented} by a marked graph diagram $\Gamma$ if $\mathcal L$ is equivalent to the  surface-link $F(\Gamma)$.  Any surface-link can be presented by an admissible marked graph diagram.  Two admissible marked graph diagrams present equivalent surface-links if and only if they are related by a finite sequence of Yoshikawa moves (\cite{KK,KJL2,Sw}).


S. Ashihara  introduced a method of constructing a broken surface diagram of a surface-link from its marked graph diagram \cite{As}. For our later use, we describe here his construction.  
In what follows, by $D\rightarrow D'$ we mean that a link diagram $D'$ is obtained from a  link diagram $D$ by a single Reidemeister move (Figure~\ref{fig-rmove}) or an ambient isotopy of $\mathbb R^2$.
\begin{figure}[ht]
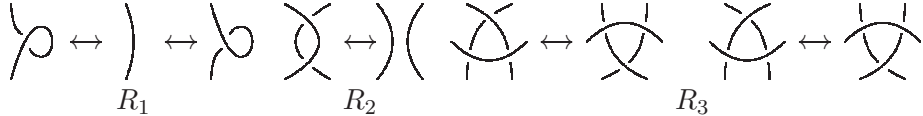

\centerline{
\xy (0,0);(4,7) **\crv{(2,7)}, (4,3);(4,7)
**\crv{(6,3)&(6,7)}, (0,10);(1.5,6) **\crv{(0.2,6.5)},
(2.5,4);(4,3) **\crv{(3,3.1)},
\endxy
\xy (2,5);(6,5) **@{-} ?>*\dir{>} ?<*\dir{<},(0,0)*{},
\endxy
\hskip 0.2cm
\xy (0,0);(0,10) **\crv{(2,5)},
(1,-3) *{R_1},
\endxy
\xy (2,5);(6,5) **@{-} ?>*\dir{>} ?<*\dir{<},(0,0)*{},
\endxy
\hskip 0.2cm
\xy (0,10);(4,3) **\crv{(2,3)}, (4,7);(4,3)
**\crv{(6,7)&(6,3)}, (0,0);(1.5,4) **\crv{(0.2,3.5)},
(2.5,6);(4,7) **\crv{(3,6.9)},
\endxy
\quad
\xy (0,0);(0,10) **\crv{(9,5)}, (6,10);(3.8,8.5) **\crv{(5,9.5)},
(6,0);(3.8,1.5) **\crv{(5,0.5)}, (2.3,2.8);(2.3,7.2)
**\crv{(0.5,5)},
\endxy
\xy (2,5);(6,5) **@{-} ?>*\dir{>} ?<*\dir{<},
(4,-3) *{R_2},(0,0)*{},
\endxy
\xy (0,0);(0,10) **\crv{(4,5)}, 
(6,0);(6,10) **\crv{(2,5)},
\endxy
\quad
\xy (0,5);(10,5) **\crv{(5,0)}, (7.5,4);(2,10) **\crv{(6.7,8)},
(2,0);(2.2,2.2) **\crv{(2,1)}, (8,0);(7.8,2.2) **\crv{(8,1)},
(2.5,4);(4.4,7.7) **\crv{(3,6.5)}, (5.8,8.8);(8,10) **\crv{(6,9)},
\endxy
\xy (2,5);(6,5) **@{-} ?>*\dir{>} ?<*\dir{<},
,(0,0)*{},
\endxy
\hskip 0.2cm
\xy (0,5);(10,5) **\crv{(5,10)}, (2.5,6);(8,0) **\crv{(3.3,2)},
(2,10);(2.2,7.8) **\crv{(2,9)}, (8,10);(7.8,7.8) **\crv{(8,9)},
(7.5,6);(5.6,2.3) **\crv{(7,3.5)}, (4.2,1.2);(2,0) **\crv{(4,1)},
\endxy
\xy (4,-3) *{R_{3}},(0,0)*{},
\endxy
\xy (0,5);(10,5) **\crv{(5,0)}, (2.5,4);(8,10) **\crv{(3.3,8)},
(8,0);(7.8,2.2) **\crv{(8,1)}, (2,0);(2.2,2.2) **\crv{(2,1)},
(7.5,4);(5.6,7.7) **\crv{(7,6.5)}, (4.2,8.8);(2,10) **\crv{(4,9)},
\endxy
\xy (2,5);(6,5) **@{-} ?>*\dir{>} ?<*\dir{<},
,(0,0)*{},
\endxy
\hskip 0.2cm
\xy (0,5);(10,5) **\crv{(5,10)}, (7.5,6);(2,0) **\crv{(6.7,2)},
(8,10);(7.8,7.8) **\crv{(8,9)}, (2,10);(2.2,7.8) **\crv{(2,9)},
(2.5,6);(4.4,2.3) **\crv{(3,3.5)}, (5.8,1.2);(8,0) **\crv{(6,1)},
\endxy
}\caption{Reidemeister moves of type $R_1$, $R_2$ and $R_3$}
\label{fig-rmove}
\end{figure}
  
 Let $\Gamma$ be an admissible marked graph diagram, and let $\Gamma_+$ and $\Gamma_-$ be the positive and the negative resolutions.  
 
Since $\Gamma_+$ is a diagram of a trivial link, there is a sequence of link diagrams from $\Gamma_+$ to a trivial link diagram $O$ related by ambient isotopies of $\mathbb R^2$ and Reidemeister moves: 
$$\Gamma_+=D_1\rightarrow D_2\rightarrow\cdots\rightarrow D_r=O.$$ 
For each $i$ ($i=1, \ldots, r-1$), let $\{f^{(i)}_t\}_{t\in I}$ be a $1$-parameter  family of homeomorphisms from $\mathbb{R}^3$ to $\mathbb{R}^3$ which satisfies 
$$f_0^{(i)}={\rm id}, \quad f_1^{(i)}(L(D_i))=L(D_{i+1}),$$ 
where $L(D_i)$ denotes a link in $\mathbb{R}^3$ with diagram $D_i$ for $i$ ($i=1, \ldots, r$). 
Without loss of generality, we may assume that $L(D_1)= L_+$ and the  
following two conditions are satisfied.  
\begin{itemize}
\item[$\bullet$] When the move $D_i\rightarrow D_{i+1}$ is an ambient isotopy of $\mathbb R^2$, let $\{h^{(i)}_t\}_{t\in I}$ be  an ambient isotopy of $\mathbb R^2$ such that $h_1^{(i)}(D_i)=D_{i+1}.$ Then $f^{(i)}_t$ satisfies $\pi(f_t^{(i)}(L(D_i)))=h_t^{(i)}(\pi(L(D_i)))$ for $t\in I.$ 
\item[$\bullet$] When the move $D_i\rightarrow D_{i+1}$ is a  Reiedemeister move, let $B_{(i)}$ be a disk in $\mathbb R^2$ where the move is applied, and let $M_{(i)}$ be the subset of $B_{(i)}\times I$ $\subset \mathbb R^3$ determined by $\pi(M_{(i)}\cap (B_{(i)}\times \{t\}))=\pi(f_t^{(i)}(L(D_i)))\cap B_{(i)}$ for $t\in I.$ 
Then $M_{(i)}$ is as in Figure~\ref{fig-m1}, \ref{fig-m2}, or \ref{fig-m3}.
\end{itemize} 
 \begin{figure}[ht]
\begin{center}
\resizebox{0.5\textwidth}{!}{%
  \includegraphics{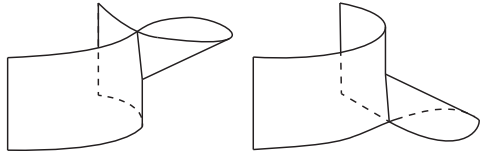} }
\caption{$M_{(i)}$ for $R_1$}
\label{fig-m1}
\end{center}
\end{figure}
 \begin{figure}[ht]
\begin{center}
\resizebox{0.5\textwidth}{!}{%
  \includegraphics{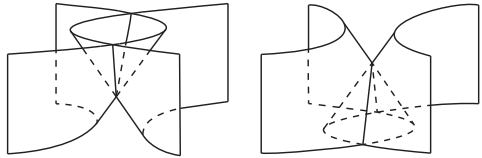} }
\caption{$M_{(i)}$ for $R_2$}
\label{fig-m2}
\end{center}
\end{figure}
 \begin{figure}[ht]
\begin{center}
\resizebox{0.5\textwidth}{!}{%
  \includegraphics{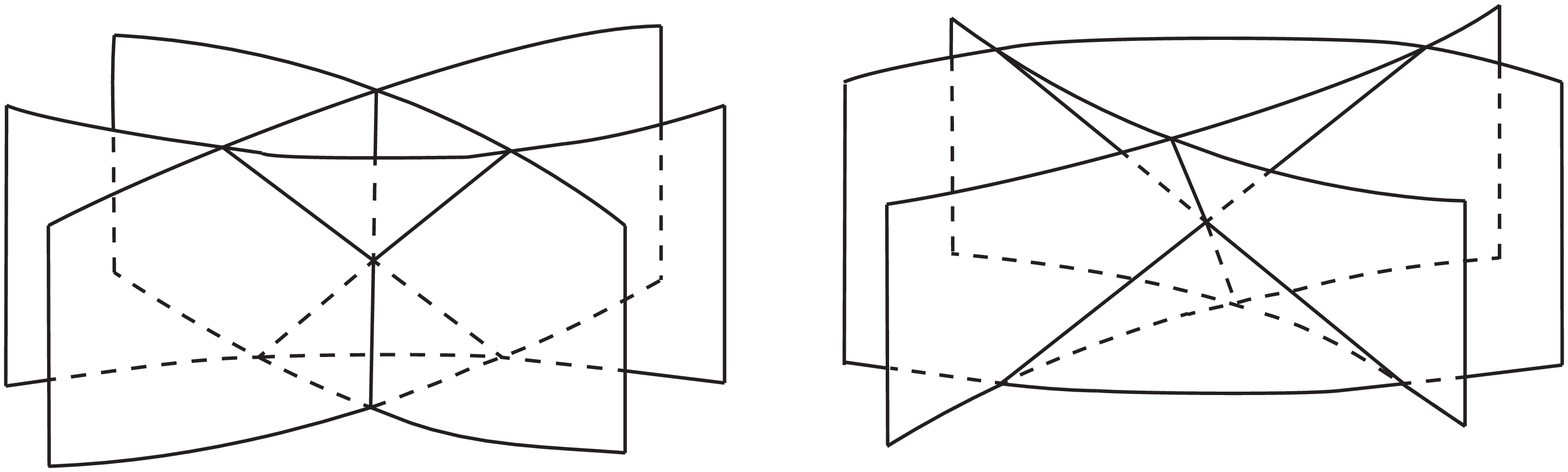} }
\caption{$M_{(i)}$ for $R_3$}
\label{fig-m3}
\end{center}
\end{figure}
Take real numbers $t_1, \ldots, t_r$ with $1 < t_1< \cdots<t_r<2.$ For each $i$ ($i=1, \ldots, r-1$), we define a homeomorphism $F^{(i)}:\mathbb{R}^4 (=\mathbb{R}^3 \times \mathbb{R}) 
\rightarrow\mathbb{R}^4$ by 
\begin{align*}
F^{(i)}(x,t)=\begin{cases}
(x,t) \hskip 2cm (t\leq t_i),\\
(f_{\phi(t)}^{(i)}(x),t) \hskip 1cm (t_i<t< t_{i+1}),\\
(f_{1}^{(i)}(x),t) \hskip 1.15cm (t\geq t_{i+1}),
\end{cases}
\end{align*}
where $\phi(t)=(t-t_i)/(t_{i+1}-t_i).$

Similarly, consider a sequence of link diagrams from $\Gamma_-$ to a trivial link diagram $O',$ related by ambient isotopies of $\mathbb R^2$ and Reidemeister moves:
$$\Gamma_-=D_1' \rightarrow D_2' \rightarrow\cdots\rightarrow D_s'=O'.$$
For each $j$ ($j=1, \ldots,s-1$), let $\{g^{(j)}_t\}_{t\in I}$ be a $1$-parameter family of homeomorphisms from $\mathbb{R}^3$ to $\mathbb{R}^3$ which satisfies $$ g_0^{(j)}={\rm id}, \quad g_1^{(j)}(L(D_j'))=L(D_{j+1}').$$  
Without loss of generality, we may assume that $L(D_1')=L_-$ and 
the following two conditions are satisfied.  
\begin{itemize}
\item[$\bullet$] When the move $D_j'\rightarrow D_{j+1}'$ is an ambient isotopy of $\mathbb R^2$, let $\{{h'_t}^{(j)}\}_{t\in I}$ be  an ambient isotopy of $\mathbb R^2$ such that ${h'_1}^{(j)}(D_j')=D_{j+1}'.$ Then $g^{(j)}_t$ satisfies $\pi(g_t^{(j)}(L(D_j')))={h'_t}^{(j)}(\pi(L(D_j')))$ for $t\in I.$
\item[$\bullet$] When the move $D_j'\rightarrow D_{j+1}'$ is a  Reidemeister move, let $B_{(j)}'$ be a disk in $\mathbb R^2$ where the move is applied, and let $M_{(j)}'$ be the subset of $B_{(j)}'\times I$ $\subset \mathbb R^3$ determined by $\pi(M_{(j)}'\cap (B_{(j)}'\times \{t\}))=\pi(g_t^{(j)}(L(D_j')))\cap B_{(j)}'$ for $t\in I.$ Then $M_{(j)}'$ is as in Figure \ref{fig-m1}, \ref{fig-m2}, or \ref{fig-m3}.
\end{itemize} 
Take real numbers $t_1', \ldots, t_s'$ with $-1 > t_1'> \cdots> t_s' > -2.$ For each $j$ ($j=1, \ldots, s-1$), we define a homeomorphism $G^{(j)}:\mathbb{R}^4\rightarrow\mathbb{R}^4$ by 
\begin{align*}
G^{(j)}(x,t)=\begin{cases}
(x,t) \hskip 2cm (t\geq t_j'),\\
(g_{\psi(t)}^{(j)}(x),t) \hskip 1cm (t_{j+1}'<t< t_j'),\\
(g_{1}^{(j)}(x),t) \hskip 1.15cm (t\leq t_{j+1}'),
\end{cases}
\end{align*}
where $\psi(t)=(t_j'-t)/(t_j'-t_{j+1}').$

Let $F'=G^{(s-1)}\circ G^{(s-2)}\circ\cdots\circ G^{(1)}\circ F^{(r-1)}\circ F^{(r-2)}\circ\cdots\circ F^{(1)}(F(\Gamma))$.
Then $F'$ is equivalent to $F(\Gamma)$.  

Let $B_1,\ldots,B_\mu$ be mutually disjoint 2-disks in $\mathbb{R}^3$ such that $\partial(B_1\cup\cdots\cup B_\mu)=L(O)$ and $\pi|_{B_1\cup\cdots\cup B_\mu}$ is an embedding. Let $B_1',\ldots,B_\lambda'$ be mutually disjoint 2-disks in $\mathbb{R}^3$ 
such that $\partial(B_1'\cup\cdots\cup B_\lambda')=L(O')$ and $\pi|_{B_1'\cup\cdots\cup B_\lambda'}$ is an embedding. Finally we define $F$ to be the surface constructed as follows:
$$F=(B_1'\cup\cdots\cup B_\lambda')\times\{-2\} \cup  (F'\cap(\mathbb{R}^3\times(-2,2)) ) \cup (B_1\cup\cdots\cup B_\mu)\times\{2\}. $$
It is in general position with respect to the projection $q : \mathbb R^4 \to \mathbb R^3, (x,y,z,t)\\ \mapsto (x,y,t)$. The broken surface diagram of $F$ obtained from  $q(F)$   is called a {\it broken surface diagram associated to $\Gamma$}, and denoted by  $\mathcal{B}(\Gamma)$.


\section{Quandle cocycle invariants of oriented surface-links}\label{sect-qcoc}

We recall quandle cocycle invariants of oriented surface-links from \cite{CJKLS}.

A {\it quandle} is a set $X$ with a binary operation $\ast:X\times X\rightarrow X$ satisfying that 
(i) for any $x\in X$, $x\ast x=x$, (ii) for any $x,y \in X$, there is a unique $u\in X$ such that $x=u\ast y$, and (iii)  for any $x,y,z\in X$, $(x\ast y)\ast z=(x\ast z)\ast (y\ast z)$.   
In (ii), the unique element $u$ is denoted by $x\ast \bar y$, and then $x=u\ast y=(x\ast\bar y)\ast y$. 
 
\begin{example}\label{ex-qua}

(1) The {\it dihedral quandle} of order $n$ is the set 
${R_n}=\{0,1,\ldots, n-1\}$ with the binary operation  $i\ast j=2j-i$ (mod $n)$ for each $i,j\in R_n$.   

(2) Let $S_4=\{0,1,2,3\}.$ Define a binary operation $\ast:S_4\times S_4\rightarrow S_4$ by
\begin{center}
\begin{tabular}{c|cccc}
*&0&1&2&3\\
\hline
0&0&2&3&1\\
1&3&1&0&2\\
2&1&3&2&0\\
3&2&0&1&3\\
\end{tabular}
\end{center} 
Then $S_4$ is a quandle, which is called the {\it tetrahedral quandle}.
 

(3) Let $G$ be a group. The {\it conjugation quandle,}  denoted by conj$(G)$, is $G$ with the operation $x\ast y=y^{-1}xy.$ 
  \end{example}

Let $X$ be a quandle. For each positive integer $n$, let $C_n^R(X)$ be the free abelian group generated by $n$-tuples $(x_1,\ldots,x_n)$ of elements of $X$.  We assume $C_n^R(X) = \{ 0 \}$ for $n \leq 0$. 
Define a homomorphism $\partial_n:C_n^R(X)\rightarrow C_{n-1}^R(X)$ by \begin{align*}
\partial_n(x_1,x_2,\ldots,x_n)&=\sum^{n}_{i=2} (-1)^i[(x_1,x_2,\ldots,x_{i-1},x_{i+1},\ldots,x_n)\\&-(x_1\ast x_i,x_2\ast x_i,\ldots,x_{i-1}\ast x_i,x_{i+1},\ldots,x_n)]\end{align*} for $n\geq2$ and $\partial_n=0$ for $n\leq1.$ Then $C^R_*(X)=\{C_n^R(X),\partial_n\}$ is a chain complex.
   Let $C_n^D(X)$ be the subset of $C_n^R(X)$ generated by $n$-tuples $(x_1,\ldots,x_n)$ with $x_i=x_{i+1}$ for some $i\in{1,\ldots,n-1}$ if $n\geq2;$ otherwise let $C_n^D(X)=0.$ Then $C_*^D(X)=\{C_n^D(X),\partial_n\}$ is a sub-complex of $C_*^R(X).$ Consider the quotient chain complex $C_*^Q(X)=\{C_n^Q(X),\partial_n \}$, where $C_n^Q(X)=C_n^R(X)/C_n^D(X)$.   
For an abelian group $A$,  we define chain and cochain complexes by 
$C_*^Q(X;A)=C^Q_*(X)\otimes A$ and 
$C_Q^*(X;A)=Hom(C_*^Q(X),A)$.  The homology and cohomology groups are denoted by 
$H_n^Q(X;A)$ and $H_Q^n(X;A)$, respectively.  
The cycle and boundary groups (or cocycle and coboundary groups, resp.) are denoted by $Z_n^Q(X;A)$ and $B_n^Q(X;A)$ (or $Z^n_Q(X;A)$ and $B^n_Q(X;A)$, resp.).  
We will omit the coefficient group $A$ as usual if $A=\mathbb{Z}.$

A homomorphism 
$\theta:C_3^R(X)\rightarrow A$ is regarded as a 3-cocycle of the cochain complex $C_Q^*(X;A)$, called a {\it quandle $3$-cocycle},  
if and only if  $\theta$ satisfies the following two conditions (where $A$ is written multiplicative):
\begin{itemize}
\item[$\bullet$] $\theta(x,x,y)=1$ and $\theta(x,y,y)=1$ for all $x,y\in X,$ where 1 is the identity element in $A$.
\item[$\bullet$] $\theta(x,z,w)\theta(x,y,z)\theta(x\ast z,y\ast z,w)=\theta(x,y,w)\theta(x\ast y,z,w)\theta(x\ast w, y\ast w, z\ast w)$ for each $x,y,z,w\in X.$
\end{itemize}

Let $\mathcal B$ be a broken surface diagram of an oriented surface-link $\mathcal L$, and let $S(\mathcal B)$ be the set of sheets of $\mathcal B$.  
Let $X$ be a quandle.  A {\it coloring} of $\mathcal B$ by $X$ is a map $\mathcal{C}: S(\mathcal B)\rightarrow X$ satisfying the condition that at each double point curve,  if the co-orientation of the over-sheet $y$ is from the under-sheet $x$ to $z$, then $\mathcal{C}(z)=\mathcal{C}(x)\ast\mathcal{C}(y).$ See the left of Figure \ref{fig-colb}.  Let ${\rm Col}_X(\mathcal B)$ denote the set of all colorings of $\mathcal B$ by $X$. 

    \begin{figure}[ht]
\begin{center}
\resizebox{0.55\textwidth}{!}{%
  \includegraphics{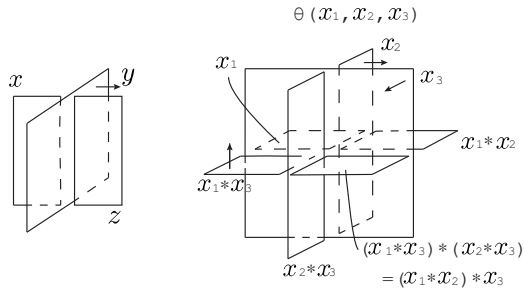} }
\caption{A double point curve and a triple point}\label{fig-colb}
\end{center}
\end{figure}

Let $\tau$ be a triple point of $\mathcal B$.  
The {\it sign} of $\tau$ is positive if the co-orientations of the top, the middle and the bottom sheets at $\tau$ in this order match the given (right-handed) orientation of $\mathbb{R}^3$. Otherwise, the sign is negative.  
There are eight complementary regions of $\mathcal B$ around $\tau$. (Some of them may be the same.)  There is a unique region such that the co-orientations of the sheets facing the region point from the region to the opposite regions.  We call this region the {\it source region} of $\tau$.  

For a 3-cocycle $\theta\in Z^3_Q(X;A)$, the quandle cocycle invariant $\Phi_\theta(\mathcal L)$ of an oriented surface-link 
$\mathcal L$ associated to $\theta$ is defined as follows.   Let $\mathcal B$ be a broken surface diagram of $\mathcal L$. Let $\mathcal{C}: S(\mathcal B)\rightarrow X$ be a coloring of $\mathcal B$. Let $\tau$ be a triple point of $\mathcal B$ and let $x_1, x_2$, and $x_3$ be colors of the bottom, the middle, and the top sheets facing the source region of $\tau$, respectively. Let $\epsilon(\tau)$ denote the sign of $\tau$. 
See Figure~\ref{fig-colb}, where $\epsilon(\tau)=1$.  
The {\it (Boltzman) weight} $B_\theta(\tau,\mathcal{C})$ at $\tau$ with respect to $\mathcal{C}$ is defined to be $$B_\theta(\tau,\mathcal{C})=\theta(x_1, x_2, x_3)^{\epsilon(\tau)}.$$

The {\it partition function} or {\it state-sum} of $\mathcal B$ (associated to  $\theta$) is 
\begin{align*}\Phi_\theta(\mathcal B)=\sum_{\mathcal{C}\in {\rm Col}_X(\mathcal B)}\prod_{\tau\in T(\mathcal B)}B_\theta(\tau,\mathcal{C})\in \mathbb Z[A],\end{align*}
where  $T(\mathcal B)$ is the set of all triple points in $\mathcal B.$

\begin{theorem}[\cite{CJKLS}]\label{thm-cocycleb}
Let $\mathcal L$ be an oriented surface-link and let $\mathcal B$ be a broken surface diagram of $\mathcal L$. The partition function $\Phi_\theta(\mathcal B)$ does not depend on the choice of $\mathcal B$. Thus it is an invariant of $\mathcal L$. \end{theorem}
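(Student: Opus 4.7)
The plan is to invoke Roseman's theorem, recalled in Section~\ref{sect-mgd}: two broken surface diagrams present equivalent oriented surface-links if and only if they are related by a finite sequence of Roseman moves. It therefore suffices to show that $\Phi_\theta(\mathcal{B})$ is invariant under each of the seven Roseman moves. Since every such move is supported in a ball $B \subset \mathbb{R}^3$ and agrees with the identity outside $B$, the argument reduces to a purely local analysis: compare $\mathcal{B} \cap B$ with $\mathcal{B}' \cap B$.

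For each Roseman move I would first produce a natural bijection $\mathrm{Col}_X(\mathcal{B}) \to \mathrm{Col}_X(\mathcal{B}')$. Away from $B$ the coloring is unchanged, and inside $B$ the colors on new sheets are forced: going through a co-oriented crossing from an $x$-sheet under a $y$-sheet yields an $(x \ast y)$-sheet, and the inverse quandle operation $\bar{\ast}$ provided by axiom (ii) lets us determine colors in the other direction. The quandle axioms (i) and (iii) are precisely what is needed for consistency at the triple points that appear along the move, so the map is well-defined and bijective. Given this bijection, it remains to show that for corresponding colorings $\mathcal{C} \leftrightarrow \mathcal{C}'$ the products $\prod_\tau B_\theta(\tau,\mathcal{C})$ and $\prod_{\tau'} B_\theta(\tau',\mathcal{C}')$ coincide. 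For the Roseman moves that either create no new triple points, or create a pair of triple points with opposite signs along the same triple of colors, cancellation is immediate. The moves that create a pair of triple points whose source-region colors are of the form $(x,x,y)$ or $(x,y,y)$ contribute trivially by the two degeneracy conditions $\theta(x,x,y) = \theta(x,y,y) = 1$.

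The essential case is the tetrahedral Roseman move, in which four sheets meet in general position in $B$: on one side of the move there are three triple points, and on the other side there are three triple points, with the four sheets partitioned into top/middle/bottom roles in two different combinatorial patterns. With colors $x, y, z, w$ assigned to the four sheets facing the source region, the product of Boltzman weights on one side reads $\theta(x,y,w)\,\theta(x \ast y, z, w)\,\theta(x \ast w, y \ast w, z \ast w)$, and on the other side it reads $\theta(x,z,w)\,\theta(x,y,z)\,\theta(x \ast z, y \ast z, w)$. Equality of these two expressions is exactly the second bullet condition characterizing a quandle $3$-cocycle, which holds because $\theta \in Z^3_Q(X;A)$ by hypothesis. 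Therefore the partition function is preserved by this move as well.

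The main obstacle is the bookkeeping in the tetrahedral case: one has to track co-orientations and signs of the six triple points through the move, identify the source region at each triple point, and read off the ordered triple of colors for the bottom, middle, and top sheets. Only after carefully verifying that the six factors organize themselves into the shape of the cocycle identity, with matching signs $\epsilon(\tau)$, does the argument close. Invariance of $\Phi_\theta(\mathcal{B})$ under all Roseman moves then yields the theorem.
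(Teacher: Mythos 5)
This theorem is quoted from \cite{CJKLS} and the paper supplies no proof of its own, so there is nothing internal to compare against; your Roseman-move argument is precisely the standard proof from that reference, and it is correct in outline (bijection of colorings for each move, triviality or cancellation for the moves not involving a quadruple point, degeneracy conditions for the branch-point move, and the $3$-cocycle identity for the tetrahedral move). One bookkeeping correction: the tetrahedral move has \emph{four} triple points on each side, not three --- the triple point formed by the three upper sheets has the same weight before and after the move, since the bottom sheet does not recolor the sheets lying above it, and only after cancelling that pair do the remaining six weights assemble into the second bullet condition for $\theta\in Z^3_Q(X;A)$ exactly as you describe.
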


We call $\Phi_\theta(\mathcal B)$ the {\it quandle cocycle invariant} of $\mathcal L$ associated to $\theta$, and denote it by $\Phi_\theta(\mathcal L).$


\section{How to compute quandle cocycle invariants from marked graph diagrams}\label{sect-qcocm}

In this section we introduce a method of computing quandle cocycle invariants from  marked graph diagrams.

Let $\Gamma$ be an oriented marked graph diagram and let $V(\Gamma)$ denote the set of all marked vertices of $\Gamma$.   By an {\it arc} of $\Gamma$ we mean  a connected component of $\Gamma \setminus V(\Gamma)$.  (At a crossing of $\Gamma$ the under-arcs are assumed to be cut.)  Let $A(\Gamma)$ denote the set of arcs of $\Gamma$.   Since $\Gamma$ is oriented, we assume that it is co-oriented: 
The co-orientation of an arc of $\Gamma$ satisfies that the pair (orientation, co-orientation) matches the (right-handed) orientation of the plane. At a crossing, if the pair of the co-orientation of the over-arc and that of the under-arc matches the (right-handed) orientation of the plane, then the crossing is called {\it positive}; otherwise it is {\it negative}. The crossing in (a) of Figure~\ref{fig-col} is positive and that in (b) is negative.

\begin{definition}\label{defn-qc-1}
Let $X$ be a quandle and let $\Gamma$ be an oriented marked graph diagram. A {\it  coloring} of $\Gamma$ by $X$ is a map $\mathcal{C}: A(\Gamma) \rightarrow X$ satisfying the following conditions (1) and (2):
\begin{itemize}
\item [(1)] For each crossing $c$, let $s_2$ be the over-arc and let $s_1$ and $s_3$ be the under-arcs  as shown in (a) or (b) of Figure~\ref{fig-col} such that the co-orientation of $s_2$ points from $s_1$ to  $s_3$. Then 
$\mathcal C(s_3)= \mathcal C(s_1) \ast \mathcal C(s_2)$.  

(In this case, $s_1$ is called the {\it source arc} and $s_3$ is called the {\it target arc} at $c$. The quandle element $\mathcal{C}(s_i)$ is called a {\it color} of the arc $s_i$.)  

\item[(2)] For each marked vertex $v$, let $s_1, s_2, s_3$ and $s_4$ be the arcs of $\Gamma$ as shown in (c) or (d) of Figure~\ref{fig-col}.
Then $\mathcal C(s_1)=\mathcal C(s_2)=\mathcal C(s_3)=\mathcal C(s_4)$.  

\begin{figure}[ht]
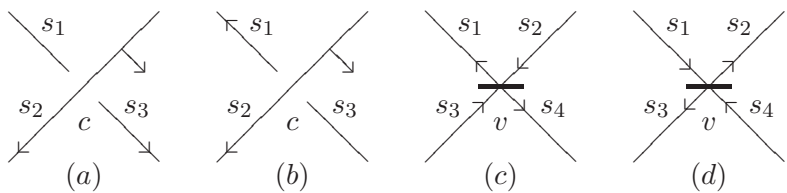

\centerline{
\xy (-10,10);(-2,2) **@{-}, 
(10,-10);(2,-2) **@{-}, 
(10,10);(-10,-10) **@{-},  
(5,5);(8,2) **@{-},
(-8,-8) *{\llcorner}, (7.4,2.7) *{\lrcorner},
(8,-8) *{\lrcorner}, (7,-3) *{s_3},(-4,8) *{s_1},(-7,-3) *{s_2},(0,-5) *{c},
(0,-12)*{(a)},
\endxy 
 \qquad
\xy (-10,10);(-2,2) **@{-}, 
(10,-10);(2,-2) **@{-}, 
(10,10);(-10,-10) **@{-},  
(5,5);(8,2) **@{-},
(-8,-8) *{\llcorner}, (7.4,2.7) *{\lrcorner},
(-8,7.5) *{\ulcorner}, 
(7,-3) *{s_3},(-4,8) *{s_1},(-7,-3) *{s_2},(0,-5) *{c},
(0,-12)*{(b)},
\endxy
\qquad
\xy (-10,10);(10,-10) **@{-}, 
(10,10);(-10,-10) **@{-}, 
(3,3.2)*{\llcorner}, 
(-3,-3.4)*{\urcorner}, 
(-2.5,2)*{\ulcorner},
(2.5,-2.4)*{\lrcorner}, 
(3,-0.2);(-3,-0.2) **@{-},
(3,0);(-3,0) **@{-}, 
(3,0.2);(-3,0.2) **@{-}, 
(7,-3) *{s_4},(-4,8) *{s_1},(-7,-3) *{s_3},(4,8) *{s_2},(0,-5) *{v},
(0,-12)*{(c)},
\endxy
 \qquad
\xy (-10,10);(10,-10) **@{-}, 
(10,10);(-10,-10) **@{-},  
(2.5,2)*{\urcorner}, 
(-2.5,-2.2)*{\llcorner}, 
(-3.2,3)*{\lrcorner},
(3,-3.4)*{\ulcorner},
(3,-0.2);(-3,-0.2) **@{-},
(3,0);(-3,0) **@{-}, 
(3,0.2);(-3,0.2) **@{-}, 
(7,-3) *{s_4},(-4,8) *{s_1},(-7,-3) *{s_3},(4,8) *{s_2},(0,-5) *{v},
(0,-12)*{(d)},
\endxy 
}
\vskip.1cm
\caption{Labels at a crossing}\label{fig-col}
\end{figure}

\end{itemize}

\end{definition}

We denote by ${\rm Col}_X(\Gamma)$ the set of colorings of $\Gamma$ by $X$.  

\begin{theorem}\label{thm-col}
Let $\mathcal L$ be an oriented surface-link. Let $\Gamma$ and $\mathcal B$ be a marked graph diagram and a broken surface diagram presenting $\mathcal L$, respectively. Then there is a bijection from ${\rm Col}_X(\Gamma)$ to ${\rm Col}_X(\mathcal B)$. 
\end{theorem}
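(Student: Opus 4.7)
The plan is to establish the bijection by working with the particular broken surface diagram $\mathcal{B}(\Gamma)$ built from $\Gamma$ in Section~\ref{sect-mgd}, and then transferring the result to an arbitrary $\mathcal{B}$. Since any two broken surface diagrams of $\mathcal{L}$ are related by a finite sequence of Roseman moves, and each Roseman move induces a canonical bijection on quandle colorings, it suffices to exhibit a bijection $\mathrm{Col}_X(\Gamma) \leftrightarrow \mathrm{Col}_X(\mathcal{B}(\Gamma))$.

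For the forward map, I would color $\mathcal{B}(\Gamma)$ from $\mathcal{C}\in\mathrm{Col}_X(\Gamma)$ slab by slab. In the middle slab $\mathcal{B}(\Gamma)\cap (\mathbb{R}^3\times[-\epsilon,\epsilon])$, the sheet-pieces correspond naturally to the arcs of $\Gamma$, together with the saddle surface at each marked vertex; transport $\mathcal{C}$ onto these pieces. Then push the coloring up through the model pieces $M_{(i)}$ associated to the sequence $\Gamma_+=D_1\to\cdots\to D_r=O$, using the classical fact that a quandle coloring of a link diagram extends uniquely across each Reidemeister move and each planar isotopy. Cap each disk at $t=2$ with the (well-defined) color of its connected boundary circle. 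Handle the lower slab symmetrically via $\Gamma_-=D_1'\to\cdots\to D_s'=O'$.

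Verifying that this yields a valid coloring of $\mathcal{B}(\Gamma)$ amounts to checking the coloring condition at every double-point curve. These split into three types: (a) those inherited from the crossings of $\Gamma$, where the rule $\mathcal{C}(s_3)=\mathcal{C}(s_1)\ast\mathcal{C}(s_2)$ of Definition~\ref{defn-qc-1}(1) is exactly what is needed; (b) those sitting inside the model pieces of Figures~\ref{fig-m1}--\ref{fig-m3}, where compatibility is the standard local invariance of quandle colors under $R_1$, $R_2$, $R_3$; and (c) those in planar-isotopy regions, where no new constraints appear. Condition~(2) of Definition~\ref{defn-qc-1} (all four arcs at a marker share a color) is forced because the saddle at each $w_i$ is a single smoothly embedded disk in $\mathbb{R}^4$ with no internal self-intersection, which projects via $q$ to a single sheet of $\mathcal{B}(\Gamma)$ touching all four incident arcs.

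The inverse map is restriction: given a coloring of $\mathcal{B}(\Gamma)$, read off the color of each arc of $\Gamma$ from the corresponding sheet near $t=0$. Conditions (1) and (2) of Definition~\ref{defn-qc-1} then hold by the same two observations above, and the two maps are mutually inverse by construction. The only genuine obstacle is the saddle analysis in the previous paragraph; once the saddle at a marked vertex is identified as a single sheet of $\mathcal{B}(\Gamma)$, every other verification reduces to the standard theory of quandle colorings under Reidemeister moves for classical link diagrams.
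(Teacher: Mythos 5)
Your proposal is correct, and it reaches the same destination by a more hands-on route than the paper. The paper's proof is algebraic: it introduces the fundamental quandle $Q(\Gamma)$ presented by the arcs of $\Gamma$ with the crossing relations $s_3=s_1\ast s_2$ and the vertex relations $s_1=s_2=s_3=s_4$, reduces to the case $\mathcal B=\mathcal B(\Gamma)$, invokes Ashihara's argument to get a natural isomorphism $Q(\Gamma)\cong Q(\mathcal B)$, and then identifies both coloring sets with ${\rm Hom}(Q(\cdot),X)$. You instead build the bijection directly, propagating a coloring of $\Gamma$ slab by slab through the pieces $M_{(i)}$ using unique extension of quandle colorings across Reidemeister moves, and checking the double-point-curve conditions case by case; your observation that the saddle at each marked vertex is a single locally embedded sheet connecting all four incident arcs is exactly what makes the vertex condition $\mathcal C(s_1)=\cdots=\mathcal C(s_4)$ correspond to a genuine constraint on $\mathcal B(\Gamma)$, and it is the geometric content behind the vertex relations in $Q(\Gamma)$. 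Both arguments share the same two pillars — the reduction to $\mathcal B(\Gamma)$ via invariance under Roseman moves, and the slab decomposition of $\mathcal B(\Gamma)$ — so the difference is one of packaging: your version is self-contained and explicit about where each coloring condition is used, while the paper's version is shorter, outsources the slab analysis to \cite{As}, and yields the stronger, quandle-independent statement $Q(\Gamma)\cong Q(\mathcal B)$, from which the bijection follows functorially for every $X$ at once.
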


\begin{proof}
The fundamental quandle $Q(\Gamma)$ is defined by a quandle generated by $A(\Gamma)$ and the defining relations $s_3 = s_1 \ast s_2$ for $s_1, s_2, s_3$ as in (a) or (b) in Figure~\ref{fig-col} and $s_1 = s_2 = s_3 = s_4$ for $s_1, \dots, s_4$ as in (c) or (d).  Without loss of generality, we may assume that $\mathcal B$ is a broken surface diagram associated to $\Gamma$.  Then by 
the same argument with \cite{As} we see that there is a natural isomorphism from  
 $Q(\Gamma)$ to the fundamental quandle $Q(\mathcal B)$ of $\mathcal B$.   
Since ${\rm Col}_X(\Gamma)$ is identified with ${\rm Hom}(Q(\Gamma),X)$ and 
${\rm Col}_X(\mathcal B)$ is identified with 
${\rm Hom}(Q(\mathcal B),X)$, we have a bijection from ${\rm Col}_X(\Gamma)$ to ${\rm Col}_X(\mathcal B)$. 
\end{proof}

Let $\Gamma$ be a marked graph diagram of an oriented surface-link $\mathcal L$ and $\Gamma_+$ the positive  resolution of $\Gamma.$ Let $\Gamma_+=D_1 \rightarrow D_2 \rightarrow\cdots\rightarrow D_r=O$  be a sequence of link diagrams from $\Gamma_+$ to a trivial link diagram $O$ related by ambient isotopies of $\mathbb R^2$ and oriented Reidemeister moves.   Let $I^3_+=\{i \mid D_{i}\rightarrow D_{i+1} \text{ is a move of type }  R_3\}$.  For each $i \in I^3_+$, let $B_{(i)}$ be a disk in $\mathbb R^2$ where the move $D_i\rightarrow D_{i+1}$  is applied.  

Similarly, let $\Gamma_-$ be the negative  resolution of $\Gamma$ and $\Gamma_-=D_1' \rightarrow D_2' \rightarrow\cdots\rightarrow D_s'=O'$ a sequence  of link diagrams from $\Gamma_-$ to a trivial link diagram $O'$ related by ambient isotopies of $\mathbb R^2$ and Reidemeister moves. Let $I^3_-=\{j \mid D_{j}'\rightarrow D_{j+1}' \text{ is a move of type } R_3\}$.  For each $j \in I^3_-$, let $B_{(j)}'$ be a disk in $\mathbb R^2$ where  the move $D_{j}'\rightarrow D_{j+1}'$ is applied.

We define two functions $\epsilon_{tm}$ and $\epsilon_{b}$ from the disjoint union $I^3_+ \amalg {I^3_-}$ to $\{\pm1\}$ as follows: 

Let $i\in I^3_+$ (or $i\in I^3_-$, resp.) and 
let $c$ 
be the crossing between the top arc and the two middle arcs in  $D_{i}\cap B_{(i)}$ (or $D_{i}'\cap B_{(i)}'$, resp.) 
and let $n_1$  
be the co-orientation of the bottom arc. Define $\epsilon_{tm}(i)$ and $\epsilon_{b}(i)$ for $i\in I^3_+\amalg I^3_-$ by
\begin{align}
&\epsilon_{tm}(i)={\rm sign}(c),\label{eq-etmi}\\
&\epsilon_{b}(i)=\begin{cases}
1 \hskip 0.9cm \text{if $n_1$ points from }c,\\
-1 \hskip 0.6cm \text{otherwise}. \\
\end{cases}\label{eq-ebi}
\end{align}
%
%

\begin{definition}\label{def-weightm}
Let $\Gamma$ be a marked graph diagram of an oriented surface-link $\mathcal L.$ Let $\mathcal{C}:A(\Gamma)\rightarrow X$ be a coloring of  $\Gamma$ and let $\theta\in Z^3_Q(X;A).$

Let $i\in I^3_+ \amalg {I^3_-}$. Let $R$ be the source region of the crossing $c$, i.e., the quadrant from which all co-orientations of the top arc and the middle arc point outwards. Let $R'$ be the opposite region of $R$ with respect to the top arc.  The {\it(Boltzman) weight} $B_\theta(i,\mathcal{C})$ at $i$ with respect to $\mathcal C$ is defined by $$B_\theta(i,\mathcal{C})=\theta(x_1,x_2,x_3)^{\epsilon_{tm}(i)\epsilon_{b}(i)},$$ where $x_2$ and $x_3$ are the colors of the middle arc and the top semi-arc facing $R$, respectively,  and $x_1$ is the color of the bottom semi-arc which is in $R$ or $x_1$ is the element with $x_1=a\ast\overline{x_3},$ where $a$ is the color of the bottom semi-arc which is in $R'$. See Figure \ref{fig-lab1}.

 \begin{figure}[ht]
\begin{center}
\resizebox{0.6\textwidth}{!}{%
  \includegraphics{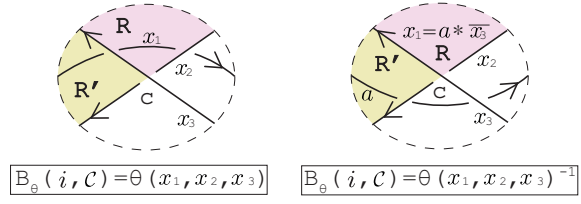} }
\caption{A (Boltzman) weight  at $i\in I^3_+ \amalg {I^3_-}$}\label{fig-lab1}
\end{center}
\end{figure}

 \end{definition}

\begin{definition}\label{def-cocyclem}
Let $\Gamma$ be a marked graph diagram of an oriented surface-link $\mathcal L.$ 
The {\it partition function} or {\it state-sum} of $\Gamma$ (associated to  $\theta$) is 
\begin{align*}
\Phi_\theta(\Gamma)=\sum_{\mathcal{C}\in {\rm Col}_X(\Gamma)}\biggl(\prod_{i\in I^3_+ }B_\theta(i,\mathcal{C})\prod_{j\in I^3_- }B_\theta(j,\mathcal{C})^{-1}\biggr).\end{align*}
\end{definition}

\begin{theorem}\label{thm-cocyclem}
Let $\mathcal L$ be an oriented surface-link and $\Gamma$ a marked graph diagram of $\mathcal L$. Then for any $\theta\in Z^3_Q(X;A),$ $\Phi_\theta(\mathcal L)=\Phi_\theta(\Gamma)$. 
\end{theorem}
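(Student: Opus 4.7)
The plan is to exploit Theorem \ref{thm-cocycleb}: since $\Phi_\theta(\mathcal L)$ can be computed from \emph{any} broken surface diagram of $\mathcal L$, it suffices to compute $\Phi_\theta(\mathcal B(\Gamma))$ where $\mathcal B(\Gamma)$ is the broken surface diagram associated to $\Gamma$ by Ashihara's construction recalled in Section \ref{sect-mgd}, and to verify this equals $\Phi_\theta(\Gamma)$ as defined in Definition \ref{def-cocyclem}. The bijection ${\rm Col}_X(\Gamma)\cong{\rm Col}_X(\mathcal B(\Gamma))$ from Theorem \ref{thm-col} lets us match colorings on the two sides summand by summand, so the whole problem reduces to showing that for each coloring $\mathcal C$, the product of Boltzman weights at triple points of $\mathcal B(\Gamma)$ equals the product defining the $\Gamma$-state-sum.

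The key observation is the following structural fact about $\mathcal B(\Gamma)$: triple points can only arise where the construction uses an $R_3$ move, because ambient isotopies of $\mathbb R^2$, $R_1$- and $R_2$-moves, the product pieces $L_\pm\times(\pm1,\pm2)$, the capping disks, and the saddle layer $\cup_{t}L_t^\pm\times\{t\}$ can each be arranged to contribute at most double point curves but no triple points (this is the content of Figures \ref{fig-m1} and \ref{fig-m2} for $R_1$, $R_2$, while Figure \ref{fig-m3} for $R_3$ produces exactly one triple point in the interior of $B_{(i)}\times I$). Hence there is a canonical bijection between $T(\mathcal B(\Gamma))$ and the disjoint union $I^3_+\amalg I^3_-$ indexing the $R_3$ moves in the resolution sequences.

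The next step is the local computation at each such triple point $\tau_i$. For $i\in I^3_+$, the time parameter $t$ of the $R_3$ move lies in $(t_i,t_{i+1})\subset(1,2)$, so in $\mathcal B(\Gamma)=q(F)$ the ``time'' coordinate $t$ becomes the third spatial coordinate. One checks directly from Figure \ref{fig-m3} that the three sheets meeting at $\tau_i$ correspond to the top, middle and bottom arcs of the $R_3$ move in $D_i$; their co-orientations in $\mathbb R^3$ are determined by the co-orientations of the arcs in $\mathbb R^2$ together with the direction of increasing $t$. The sign $\epsilon(\tau_i)$ is then $\epsilon_{tm}(i)\epsilon_b(i)$: the factor $\epsilon_{tm}(i)={\rm sign}(c)$ records whether the co-orientation frame of top-over-middle is right-handed in the $(x,y)$-plane, and $\epsilon_b(i)$ records whether the bottom co-orientation points in the same or opposite direction, so that their product gives the handedness of the ordered triple (top,middle,bottom) of co-orientations in $\mathbb R^3$. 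The source region of $\tau_i$ and the source region $R$ of the crossing $c$ correspond under projection, so the colors $x_1,x_2,x_3$ in Definition \ref{def-weightm} are exactly the colors of the bottom, middle, top sheets at the source region of $\tau_i$; this yields $B_\theta(\tau_i,\mathcal C)=\theta(x_1,x_2,x_3)^{\epsilon_{tm}(i)\epsilon_b(i)}=B_\theta(i,\mathcal C)$. For $j\in I^3_-$ the same analysis applies, except the parametrization $\psi(t)=(t_j'-t)/(t_j'-t_{j+1}')$ reverses the direction of $t$, which flips the orientation of the third axis and therefore flips the sign of the resulting triple point; this is exactly the source of the exponent $-1$ on $B_\theta(j,\mathcal C)$ in Definition \ref{def-cocyclem}.

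Putting these local identifications together,
\[
\prod_{\tau\in T(\mathcal B(\Gamma))}B_\theta(\tau,\mathcal C)=\prod_{i\in I^3_+}B_\theta(i,\mathcal C)\prod_{j\in I^3_-}B_\theta(j,\mathcal C)^{-1},
\]
and summing over colorings (which correspond bijectively via Theorem \ref{thm-col}) gives $\Phi_\theta(\mathcal B(\Gamma))=\Phi_\theta(\Gamma)$. The result then follows from Theorem \ref{thm-cocycleb}. The main obstacle I expect is step three: carefully bookkeeping the orientation/co-orientation data under the projection $q(x,y,z,t)=(x,y,t)$ so that the formula $\epsilon(\tau_i)=\epsilon_{tm}(i)\epsilon_b(i)$ is rigorously justified in every one of the possible local configurations of an oriented $R_3$ move, and likewise that the 2D source region $R$ (together with the $R'$-convention $x_1=a\ast\overline{x_3}$ used when the bottom semi-arc actually lies in $R'$) really recovers the 3D source region of the triple point with the same ordered triple of colors.
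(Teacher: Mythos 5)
Your proposal is correct and follows essentially the same route as the paper's proof: reduce to the associated broken surface diagram $\mathcal B(\Gamma)$ via Theorem~\ref{thm-cocycleb} and the coloring bijection of Theorem~\ref{thm-col}, identify $T(\mathcal B(\Gamma))$ with $I^3_+\amalg I^3_-$ since only $R_3$ moves produce triple points, verify $\epsilon(\tau_i)=\epsilon_{tm}(i)\epsilon_b(i)$ (with the sign reversal for $I^3_-$ coming from the reversed time direction), and match the source region and colors including the $x_1=a\ast\overline{x_3}$ case. The "main obstacle" you flag is exactly what the paper spends its Case~I/Case~II analysis on, and your outline of how to resolve it matches the paper's argument.
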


\begin{proof}
Let $\mathcal B=\mathcal{B}(\Gamma)$ be a broken surface diagram associated to $\Gamma$. It is sufficient to prove that $\Phi_\theta(\Gamma)=\Phi_\theta(\mathcal B)$. 

Since there is a natural bijection between  
${\rm Col}_X(\Gamma)$ and ${\rm Col}_X(\mathcal B)$ (as in the proof of Theorem~\ref{thm-col}), it 
suffices to show  the following claim.  

\smallskip
 {\bf Claim}: For each coloring $\mathcal C\in {\rm Col_X(\Gamma),}$
 $$\prod_{i\in I^3_+ }B_\theta(i,\mathcal{C})\prod_{j\in I^3_- }B_\theta(j,\mathcal{C})^{-1}=\prod_{\tau\in T(\mathcal B)}B_\theta(\tau,\mathcal C),$$
 where $\mathcal C\in {\rm Col_X(\mathcal B)}$ in the right hand side  is the corresponding  coloring.  
 
 \bigskip
{\it{\bf Proof of Claim.}}    Let $\mathcal B^i_j=\mathcal B\cap (\mathbb R^2\times [t_j',t_i])$ for $i=1, \ldots, r$ and $j=1, \ldots, s.$  Let $\phi:(\mathbb R^2,\Gamma_0)\rightarrow (\mathbb R^2\times [t_1',t_1], \mathcal B^1_1)$ be the natural embedding at $t=0$ as in Figure \ref{fig-ne}. The vertices of $\Gamma_0$ correspond to the saddle points in $\mathcal B^1_1$ and the crossings of  $\Gamma_0$ correspond to the intersection of $\mathbb R^2 \times \{0\}$ and the double point curves in $\mathcal B^1_1$. There are no triple points in $\mathcal B^1_1$. 
   
 \begin{figure}[ht]
\begin{center}
\resizebox{0.7\textwidth}{!}{%
  \includegraphics{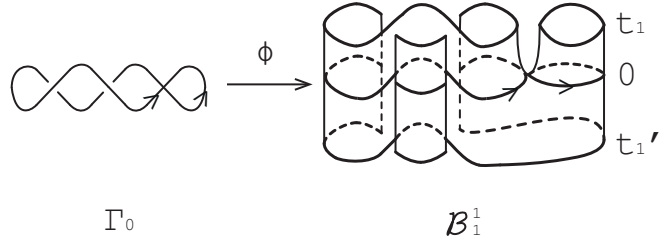} }
\caption{$\phi:(\mathbb R^2,\Gamma_0)\rightarrow (\mathbb R^2\times [t_1',t_1], \mathcal B^1_1)$}\label{fig-ne}
\end{center}
\end{figure}

Let $\mathcal B_i=\mathcal B\cap(\mathbb R^2\times [t_{i},t_{i+1}])$ for $i=1,\ldots, r-1$ and 
$\mathcal B_{j}'=\mathcal B\cap(\mathbb R^2\times [t_{j+1}',t_j'])$ for  $j=1,\ldots, s-1.$ Note that 
$T(\mathcal B)=\bigl(\overset{r-1}{\underset{i=1}{\cup}}T(\mathcal B_i)\bigr)\cup\bigl(\overset{s-1}{\underset{j=1}{\cup}}T(\mathcal B_j')\bigr)$.  

If the move $D_i\rightarrow D_{i+1}$ is an ambient isotopy of $\mathbb R^2$, then $D_{i}\times [t_{i},t_{i+1}]\cong \mathcal B_i$, and 
there are no triple points in $\mathcal B_i.$

Suppose that the move $D_i\rightarrow D_{i+1}$ is a  Reidemeister move. Since $D_{i}\setminus B_{(i)}$ and $D_{i+1}\setminus B_{(i)}$ are identical, there are no triple points in $\mathcal B_i\setminus M_{(i)}$ and we have $T(\mathcal B_i)=T(M_{(i)}),$ where $M_{(i)}$ is a subset of $B_{(i)}\times I$ determined by $\pi(M_{(i)}\cap (B_{(i)}\times \{t\}))=\pi(f_t^{(i)}(L(D_i)))\cap B_{(i)}$ for $t\in I$ and a homeomorphism $f_t^{(i)}:\mathbb R^3\rightarrow \mathbb R^3$ satisfying $f_0^{(i)}={\rm id}$ and $f_1^{(i)}(L(D_i))=L(D_{i+1}).$

If the move $D_i\rightarrow D_{i+1}$ is of  type $R_1$ or $R_2$, then there are no triple points in 
 $M_{(i)}$.  See Figures \ref{fig-m1} and \ref{fig-m2}.  

If the move $D_{i}\rightarrow D_{i+1}$ is of type $R_3$, then there is a triple point $\tau_i$ in $M_{(i)}$ as  in Figure \ref{fig-trip} and $T(\mathcal B_i)=\{\tau_i\}.$ Then $\overset{r-1}{\underset{i=1}{\cup}}T(\mathcal B_i) = \{\tau_i \mid i \in I^3_+\}.$ 

 \begin{figure}[ht]
\begin{center}
\resizebox{1\textwidth}{!}{%
  \includegraphics{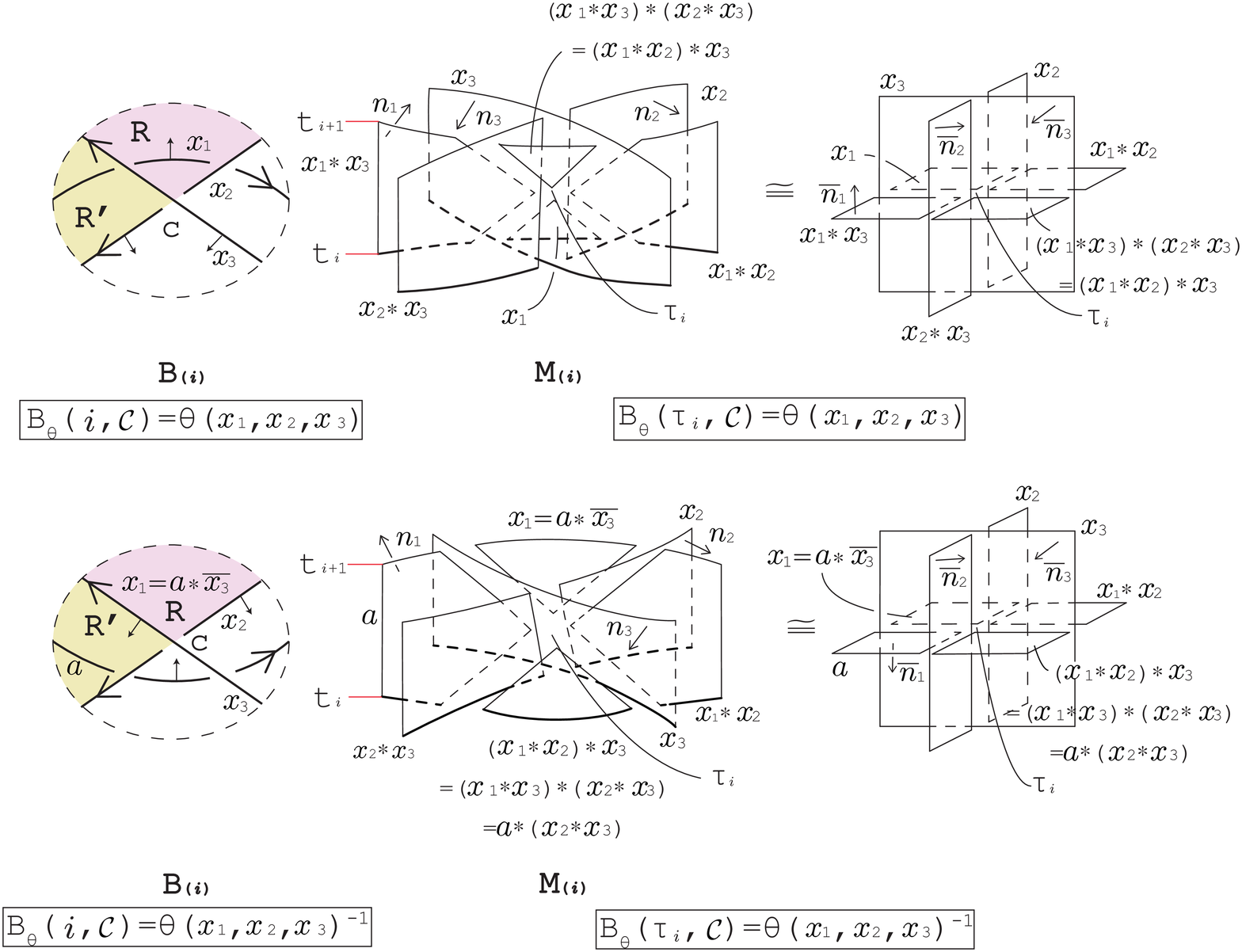} }
\caption{A (Boltzman) weight}\label{fig-trip}
\end{center}
\end{figure}

  Similarly, suppose that the move $D_j'\rightarrow D_{j+1}'$ is a  Reidemeister move and $M_{(j)}'$ is a subset of $B_{(j)}'\times I$ determined by $\pi(M_{(j)}'\cap (B_{(j)}'\times \{t\}))=\pi(g_t^{(j)}(L(D_j')))\cap B_{(j)}',$ where $g_t^{(j)}:\mathbb R^3\rightarrow \mathbb R^3$ is a homeomorphism  satisfying $g_0^{(j)}={\rm id}$ and $g_1^{(j)}(L(D_j'))=L(D_{j+1}')$ for $t\in I.$  There is a triple point $\tau_j'\in M_{(j)}'$ for  $j \in I^3_-$.   We have that  $\overset{s-1}{\underset{j=1}{\cup}}T(\mathcal B_j')=\{\tau_j' \mid j \in I^3_-\}.$  
  
Now we have 
   \begin{align}\label{eq-tr}
   T(\mathcal B)= \{ \tau_i \mid i \in I^3_+\} \cup \{ \tau_j' \mid j \in I^3_-\}. \end{align}

Let $i\in I^3_+$, i.e.,  $D_i\rightarrow D_{i+1}$ is a move of type $R_3$ and let $\tau_i$ be the  triple point in $M_{(i)}.$ Let $n_1$, $n_2$  and $n_3$ be the co-orientations of the bottom, the middle and the top arcs of $D_i$  in $B_{(i)}$, respectively. By an ambient isotopy, we deform $M_{(i)}$ in $B_{(i)} \times I$ to the standard  form of the neighborhood of the triple point $\tau_i$ as in Figure~\ref{fig-trip}.  
  Let $\bar{n}_1,$ $\bar{n}_2,$ and $\bar{n}_3$ be the normal vectors corresponding to $n_1,$ $n_2,$ and $n_3$, respectively. Without loss of generality, we may assume $\bar{n}_3= {\bf e}_1$, $\bar{n}_2= \epsilon \/ {\bf e}_2$ and $\bar{n}_1= \epsilon' \/ {\bf e}_3$ for some $\epsilon,\epsilon'\in\{1,-1\}$.  Here ${\bf e}_1 = (1,0,0)$, ${\bf e}_2 = (0,1,0)$ and ${\bf e}_3 = (0,0,1)$.  
See Figure~\ref{fig-trip}.

Let $c$ be the crossing between the top and the middle arcs in $B_{(i)}$. It is clear from Figure \ref{fig-trip} that $\epsilon={\rm sign}(c)$. By (\ref{eq-etmi}), $\epsilon={\rm sign}(c)=\epsilon_{tm}(i).$ Hence $\bar{n}_2= \epsilon_{tm}(i) \/ {\bf e}_2$.

The sign $\epsilon'$ depends on the co-orientation $n_1$ of the bottom arc. If $n_1$ points from $c$, then $\epsilon'=1$. If $n_1$ points toward $c,$ then $\epsilon'=-1.$ So, by (\ref{eq-ebi}), $\epsilon'=\epsilon_b(i)$ and hence $\bar{n}_1= \epsilon_b(i) \/ {\bf e}_3$. 

On the other hand, by definition, the sign $\epsilon(\tau_i)$ of the triple point $\tau_i$ is positive if the co-orientations of the top, the middle and the bottom sheets in this order match the given (right-handed) orientation of $\mathbb{R}^3$. Otherwise, the sign $\epsilon(\tau_i)$ is negative. This gives
\begin{align*}
\epsilon(\tau_i)=\begin{cases}
1 \hskip 1.3cm \text{if } (\bar{n}_3,\bar{n}_2,\bar{n}_1)\in A,\\
-1 \hskip 1cm \text{if }(\bar{n}_3,\bar{n}_2,\bar{n}_1)\in B,\\
\end{cases}
\end{align*}
where 
$A=\{( {\bf e}_1, {\bf e}_2, {\bf e}_3 ), ( {\bf e}_1, -{\bf e}_2, -{\bf e}_3) \}$ and 
$B=\{( {\bf e}_1, -{\bf e}_2, {\bf e}_3 ), ( {\bf e}_1, {\bf e}_2, -{\bf e}_3 ) \}.$  Therefore for each $i\in I^3_+,$ 
\begin{align}\label{eq-taui}\epsilon(\tau_i)=\epsilon_{tm}(i)\epsilon_b(i).\end{align}

Let $j\in I^3_-$, i.e.,   $D_j'\rightarrow D_{j+1}'$ is a move of type $R_3$. Let $\tau_j'$ be the triple point in $M_{(j)}'$.  
 Let $n_1$, $n_2$  and $n_3$ be the co-orientations of the bottom, the middle and the top arcs of $D_j'$ in $B_{(j)}'$, respectively. By an ambient isotopy, we deform $M_{(j)}'$ to the standard  form of the neighborhood of the triple point $\tau_j'.$ Let $\bar{n}_1,$ $\bar{n}_2,$ and $\bar{n}_3$ be the co-orientations corresponding to $n_1,$ $n_2,$ and $n_3$, respectively. Without loss of generality, we may assume $\bar{n}_3= {\bf e}_1$,  $\bar{n}_2=  \epsilon \/ {\bf e}_2$ and $\bar{n}_1= \epsilon' \/ {\bf e}_3$ for some $\epsilon,\epsilon' \in\{1,-1\}$.

Let $c$ be the crossing between the top and the middle arcs in $B_{(j)}'$. It is easily seen that $\epsilon={\rm sign}(c)$ (cf. Figure \ref{fig-trip}). By (\ref{eq-etmi}), $\epsilon={\rm sign}(c)=\epsilon_{tm}(j).$ Hence $\bar{n}_2= \epsilon_{tm}(j) \/ {\bf e}_2$. 

The sign $\epsilon'$ depends on the co-orientation $n_1$ of the bottom arc. If $n_1$ points from $c$, then $\epsilon'=-1$. If $n_1$ points toward $c,$ then $\epsilon'=1.$ So, by (\ref{eq-ebi}), $\epsilon'=-\epsilon_b(j)$ and hence $\bar{n}_1= -\epsilon_b(j)\/ {\bf e}_3$.

On the other hand, by definition, $\epsilon(\tau_j')=1$ if $\bar{n}_3, \bar{n}_2,$ and $\bar{n}_1$ in this order match the given (right-handed) orientation of $\mathbb{R}^3$. Otherwise, $\epsilon(\tau_j')=-1$. This gives 
\begin{align*}
\epsilon(\tau_j')=\begin{cases}
1 \hskip 1.3cm \text{if } (\bar{n}_3,\bar{n}_2,\bar{n}_1)\in B,\\
-1 \hskip 1cm \text{if }(\bar{n}_3,\bar{n}_2,\bar{n}_1)\in A.\\
\end{cases}
\end{align*} Therefore for each $j\in I^3_-,$ 
\begin{align}\label{eq-tauj}\epsilon(\tau_j')=-\epsilon_{tm}(j)\epsilon_b(j).\end{align}

We show that for each $i \in I^3_+$, $B_\theta(i,\mathcal C)=B_\theta(\tau_i,\mathcal C)$  and that for each $j \in I^3_-$, $B_\theta(j,\mathcal C)=B_\theta(\tau_j',\mathcal C)^{-1}$.  

Let $i\in I^3_+$ (or $j \in I^3_-$).  
There are two cases: The bottom arc meets the source region of the crossing $c$ or not 
(see Figure~\ref{fig-trip}). In this proof, we denote $M_{(i)}$ (or $M_{(j)}'$) by $M$ and $[t_i,t_{i+1}]$ (or $[t_{j+1}',t_j']$) by $I$.  

{\bf Case I}: Consider $i\in I^3_+$ (or $j \in  I^3_-$) 
such that the bottom arc in $B_{(i)}$ (or $B_{(j)}'$) hits the source region $R$ of $c$.

The top (or the middle, resp.) sheet in $M$ corresponds to the top (or the middle, resp.) arc times $I$. As shown in Figure \ref{fig-trip}, $R\times I$ is divided into two ($3$-dimensional) regions 
by the bottom sheet whose color is $x_1$.  One of them is the source region $\mathcal R$ of the triple point $\tau_i$ (or $\tau_j'$).    The colors of the top arc and the middle arc facing the source region $R$ of $c$ are the colors of the top and the middle sheets facing $\mathcal R$.  From the equalities (\ref{eq-taui}) and (\ref{eq-tauj}), we see that 
$B_\theta(i,\mathcal C)=\theta(x_1,x_2,x_3)^{\epsilon_{tm}(i)\epsilon_b(i)}=\theta(x_1,x_2,x_3)^{\epsilon(\tau_i)}=B_\theta(\tau_i,\mathcal C)$ and 
$B_\theta(j,\mathcal C)=\theta(x_1,x_2,x_3)^{\epsilon_{tm}(j)\epsilon_b(j)}=\theta(x_1,x_2,x_3)^{-\epsilon(\tau_j')}=B_\theta(\tau_j',\mathcal C)^{-1}$.  

{\bf Case II}: Consider $i\in I^3_+$ ($j \in  I^3_-$) such that the bottom arc in $B_{(i)}$ (or $B_{(j)}'$) does not meet the source region $R$ of $c$.

Similar to the case I, the second and third coordinates of $B_\theta(i,\mathcal C)$ 
(or $B_\theta(j,\mathcal C)$) 
are the same as those of $B_\theta(\tau_i,\mathcal C)$ (or 
$B_\theta(\tau_j',\mathcal C)$).  
As illustrated in Figure \ref{fig-trip}, $R'\times I$ is divided into two ($3$-dimensional) regions 
by the bottom sheet whose color is $a$, where $a$ is the color of the bottom arc in $R'$.   Since the co-orientation of the top sheet is from $R\times I$ to $R'\times I$,  $a=x_1\ast x_3.$ Thus $x_1=a\ast\overline{x_3}.$ Therefore 
$B_\theta(i,\mathcal C)=\theta(x_1,x_2,x_3)^{\epsilon_{tm}(i)\epsilon_b(i)}=\theta(x_1,x_2,x_3)^{\epsilon(\tau_i)}=B_\theta(\tau_i,\mathcal C)$ 
and 
$B_\theta(j,\mathcal C)=B_\theta(\tau_j',\mathcal C)^{-1}$.  
This completes the proof of Claim and hence the proof of Theorem~\ref{thm-cocyclem}.

\end{proof}

%
%
%
%

\begin{example}\label{ex-coc102}

We consider the oriented marked graph diagram $10_2$ of the  2-twist spun trefoil $\mathcal L$ in Figure \ref{fig-ytab}.
Let $$\theta={\chi_{0,1,0}}^{-1}\chi_{0,2,0}{\chi_{0,2,1}}^{-1}\chi_{1,0,1}\chi_{1,0,2}\chi_{2,0,2}\chi_{2,1,2}\in Z^3_Q(R_3;\mathbb Z_3),$$ where $\chi_{x,y,z}(a,b,c)=u$ if $(x,y,z)=(a,b,c),$ $\chi_{x,y,z}(a,b,c)=1$ otherwise, and $\mathbb Z_3= \langle u \mid u^3=1 \rangle$ is the cyclic group of order $3$.  Consider sequences of link  diagrams from the positive and negative resolutions to trivial link diagrams are as shown in Figures~\ref{fig-102+} and \ref{fig-102-}, respectively. Then $I^3_+=\phi$ and ${I^3_-}=\{2,3,4,5,8,10\}.$ The (Boltzman) weights are 
$B_\theta(2,\mathcal{C})=\theta(y,y,x)=1,$  
$B_\theta(3,\mathcal{C})=\theta(x*y,x,x)^{-1}=1,$  
$B_\theta(4,\mathcal{C})=\theta(x,x*y,x)^{-1},$ 
$B_\theta(5,\mathcal{C})=\theta(x,y,x)^{-1},$  
$B_\theta(8,\mathcal{C})=\theta(y,x,y)$ and 
$B_\theta(10,\mathcal{C})=\theta(x*y,x,x*y)$ for $x,y\in R_3.$ 
Therefore 
\begin{align*}\Phi_\theta(\mathcal L)&=\sum_{(x,y) \in R_3\times R_3} \theta(x,x*y,x)\theta(x,y,x)\theta(y,x,y)^{-1}\theta(x*y,x,x*y)^{-1}\\
&=3+6u.
\end{align*}
This matches the computation in \cite{CJKLS}.

 \begin{figure}
\begin{center}
\resizebox{1\textwidth}{!}{%
  \includegraphics{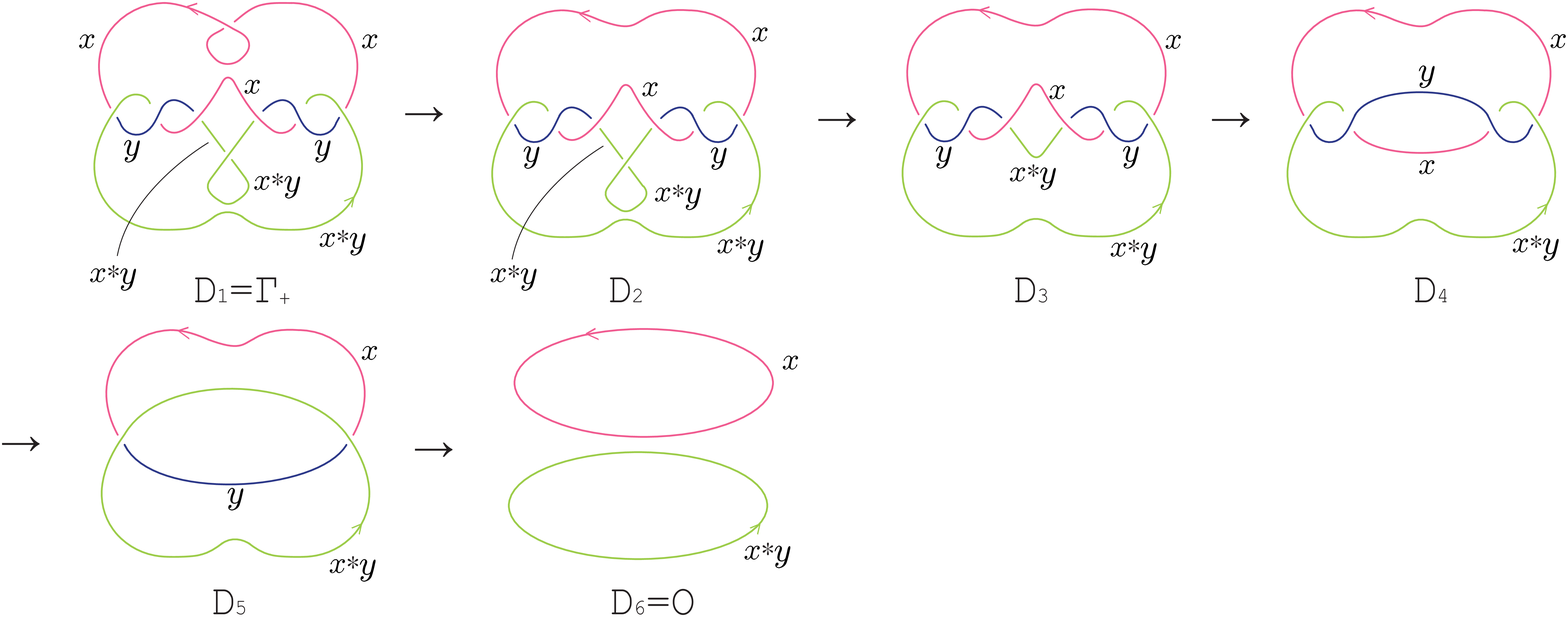} }
\caption{A sequence of link diagrams for positive resolution of $10_2$}\label{fig-102+}
\end{center}
\begin{center}
\resizebox{1\textwidth}{!}{%
  \includegraphics{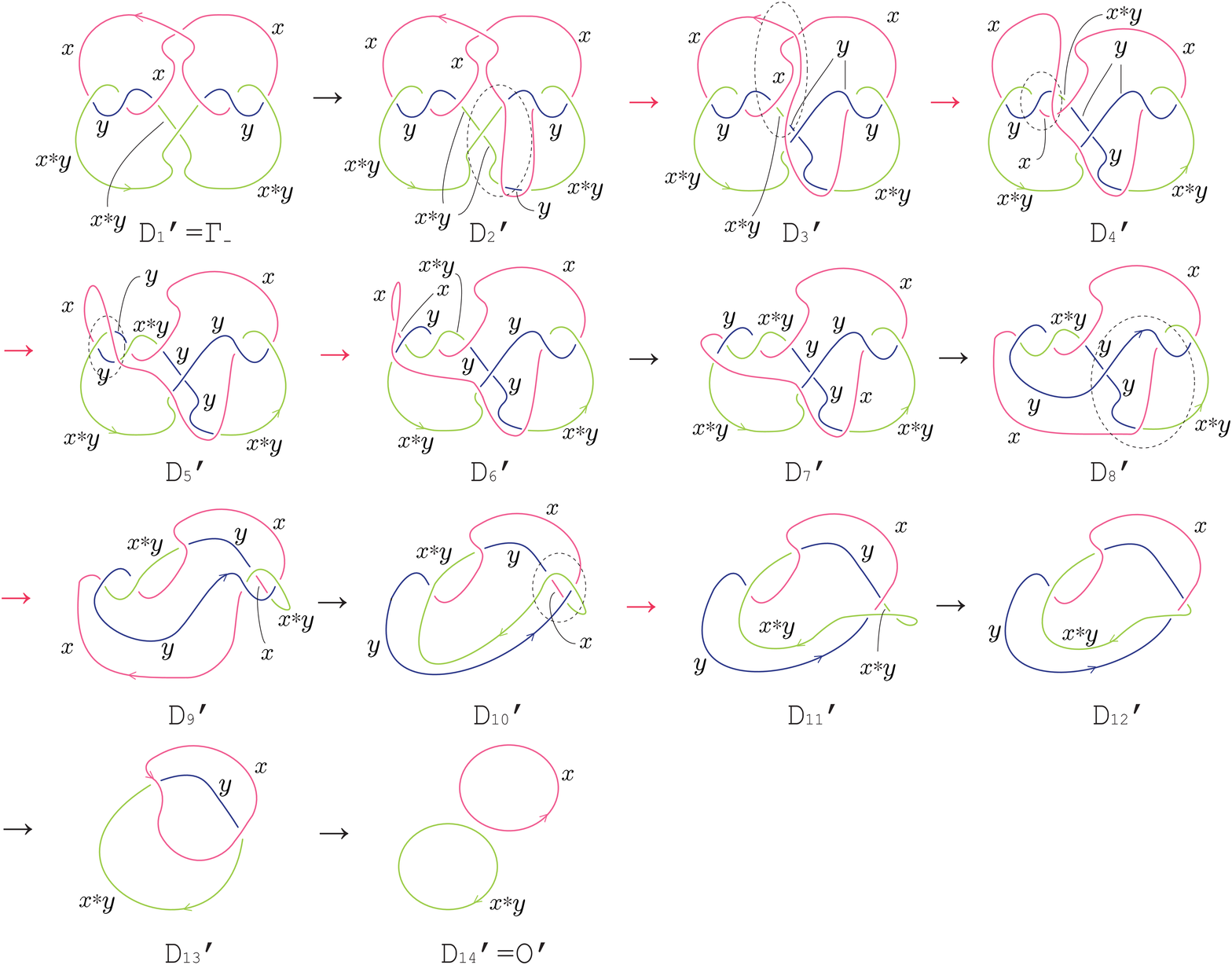} }
\caption{A sequence of link diagrams for negative resolution of $10_2$}\label{fig-102-}
\end{center}
\end{figure}

\end{example}

For 
a surface-link $\mathcal L$, the {\it ch-index} $\chi(\mathcal L)$  is defined by $\min_{\Gamma}\chi(\Gamma),$ where $\Gamma$ is a marked graph diagram presenting $\mathcal L$ and $\chi(\Gamma)$  is the sum of the number of crossings of $\Gamma$ and that of vertices of $\Gamma$.

\begin{example}\label{ex-cocy}
 Let $\mathcal L$ be an oriented surface-link with $\chi(\mathcal L)\leq 10$ presented by a marked graph diagram in 
Figure~\ref{fig-ytab} (see \cite{Yo}, for more details). 
 
  \begin{figure}[ht]
\begin{center}
\resizebox{0.8\textwidth}{!}{%
  \includegraphics{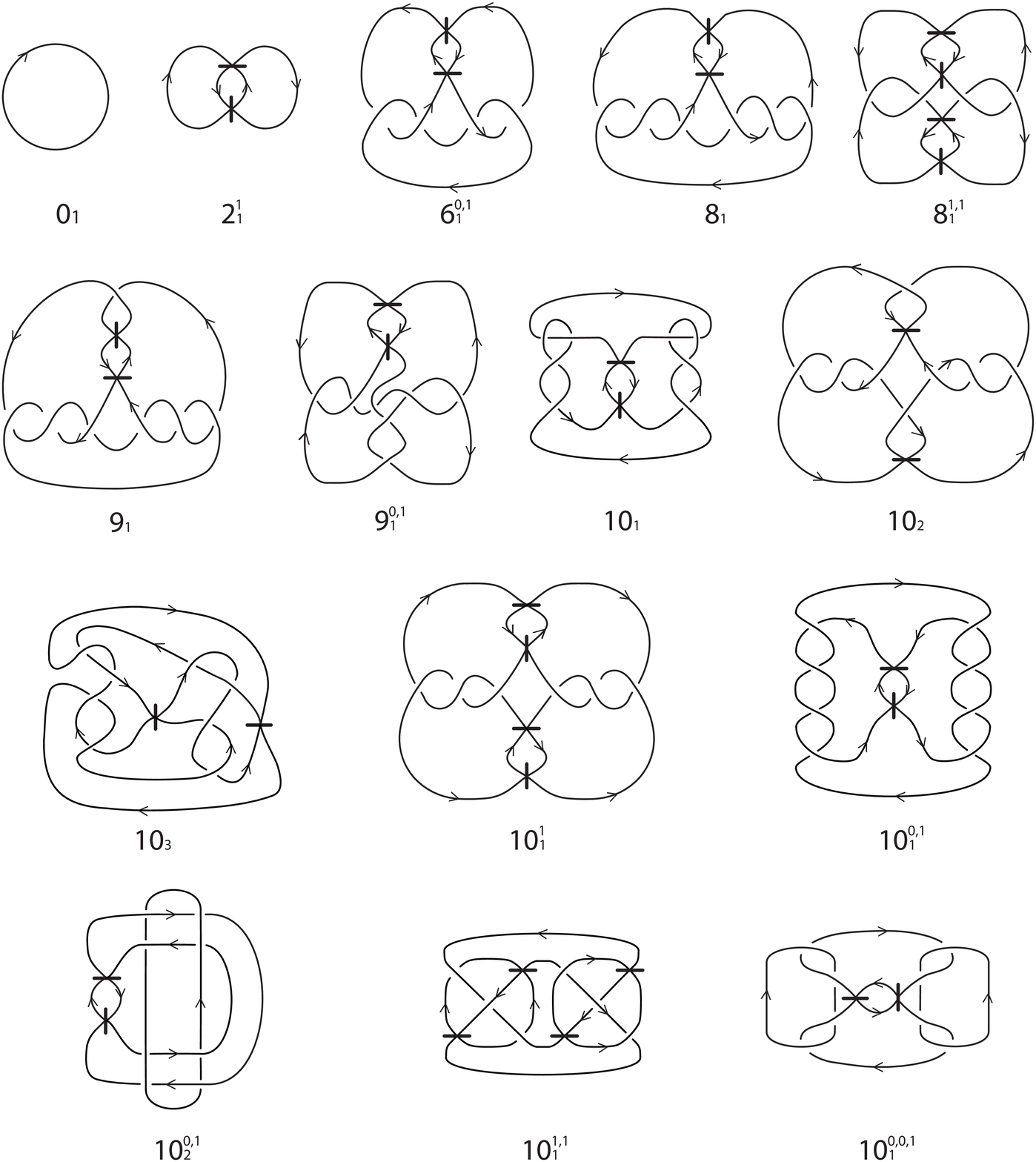} }
\caption{Oriented marked graph diagrams $\Gamma$ with $\chi(\Gamma)\leq 10$}\label{fig-ytab}
\end{center}
\end{figure}

Let ${R_3}$ be the dihedral quandle of order 3 and $\theta$ the 3-cocycle in Example \ref{ex-coc102}. Let $S_4$ be the tetrahedral quandle in Example \ref{ex-qua} and let $\eta=$ 
$$\chi_{0,1,0}\chi_{0,2,1}\chi_{0,2,3}\chi_{0,3,0}\chi_{0,3,1}\chi_{0,3,2}\chi_{1,0,1}\chi_{1,0,3}\chi_{1,2,0}\chi_{1,3,1}\chi_{2,0,3}\chi_{2,1,0}\chi_{2,1,3}\chi_{2,3,2} $$ 
in $Z^3_Q(S_4;\mathbb Z_2),$ where  $\chi_{x,y,z}(a,b,c)=t$ if $(x,y,z)=(a,b,c),$ $\chi_{x,y,z}(a,b,c)=1$ otherwise, and $\mathbb Z_2= \langle t \mid t^2=1 \rangle$. Then $\Phi_\theta(\mathcal L)$ and $\Phi_\eta(\mathcal L)$ are as in the table below.

\begin{center}
\begin{tabular}{|l|l|l|l|l|l|}
\hline
$\mathcal L$ & $\Phi_\theta(\mathcal L)$ & $\Phi_\eta(\mathcal L)$ & $\mathcal L$ & $\Phi_\theta(\mathcal L)$ &  $\Phi_\eta(\mathcal L)$\\
\hline
\hline
$0_{1}$ & 3&4& $10_{2}$ & 3+6u&4\\
$2_{1}^{1}$ & 3&4&$10_{3}$ & 3&4+12t\\
$6_{1}^{0,1}$ & 3&4&$10_{1}^{1}$ & 9&16\\
$8_{1}$ & 9&16&$10_{1}^{0,1}$ & 3&4\\ 
$8_{1}^{1,1}$ & 3&4&$10_{2}^{0,1}$ & 3&4\\
$9_{1}$ & 9&16&$10_{1}^{1,1}$ & 3&4\\
$9_{1}^{0,1}$ & 3&4&$10_{1}^{0,0,1}$ & 9&16\\
$10_{1}$ & 3&4 &&& \\
\hline
\end{tabular}
\vskip 0.5cm
\centerline{Table: $\Phi_\theta(\mathcal L)$ and $\Phi_\eta(\mathcal L)$ with $\chi(\mathcal L)\leq 10$}\label{tb-coc}
\end{center}

\end{example}

In \cite{Yo}, K. Yoshikawa introduced the notion of a marked graph diagram of {\it triangle type}.  It is seen that the quandle cocycle invariant $\Phi_\theta(\mathcal L)$ of an oriented surface-link $\mathcal L$  presented by a marked graph diagram of triangle type 
 is equal to $\#{\rm Col}_X(\mathcal L)$ for  any finite quandle $X$ and any 3-cocycle  $\theta\in Z^3_Q(X;A),$ where $\#{\rm Col}_X(\mathcal L)$ denotes the cardinality of the set ${\rm Col}_X(\mathcal L).$

In \cite{So}, M. Soma gave an enumeration of surface-links presented by marked graph diagrams of {\it square type}; $A_n, B_n, C_n, D_n,E_n, F_n, G_n, H_n$ and $I$ (See \cite{KJL}, \cite[Theorems 1.1 and 1.2]{So}). We remark that  surface-links presented by  marked graph diagrams $A_n$ and $B_n$ are orientable for all $n \geq 2$, and  surface-links presented by  marked graph diagrams $E_n, F_n, G_n, H_n$ and $I$ are also orientable for all odd integers $n \geq 3$.  See Figure \ref{fig-table-sl}.

We observe that for any finite quandle $X$ and $\theta\in Z^3_Q(X;A),$  the quandle cocycle invariant $\Phi_\theta(\mathcal L)$ of an oriented surface-link $\mathcal L$  presented by a marked graph diagram of square type 
 is equal to $\#{\rm Col}_X(\mathcal L)$  except for the surface-link presented by $G_n.$

\begin{figure}[h]
\begin{center}
\resizebox{0.75\textwidth}{!}{%
\includegraphics{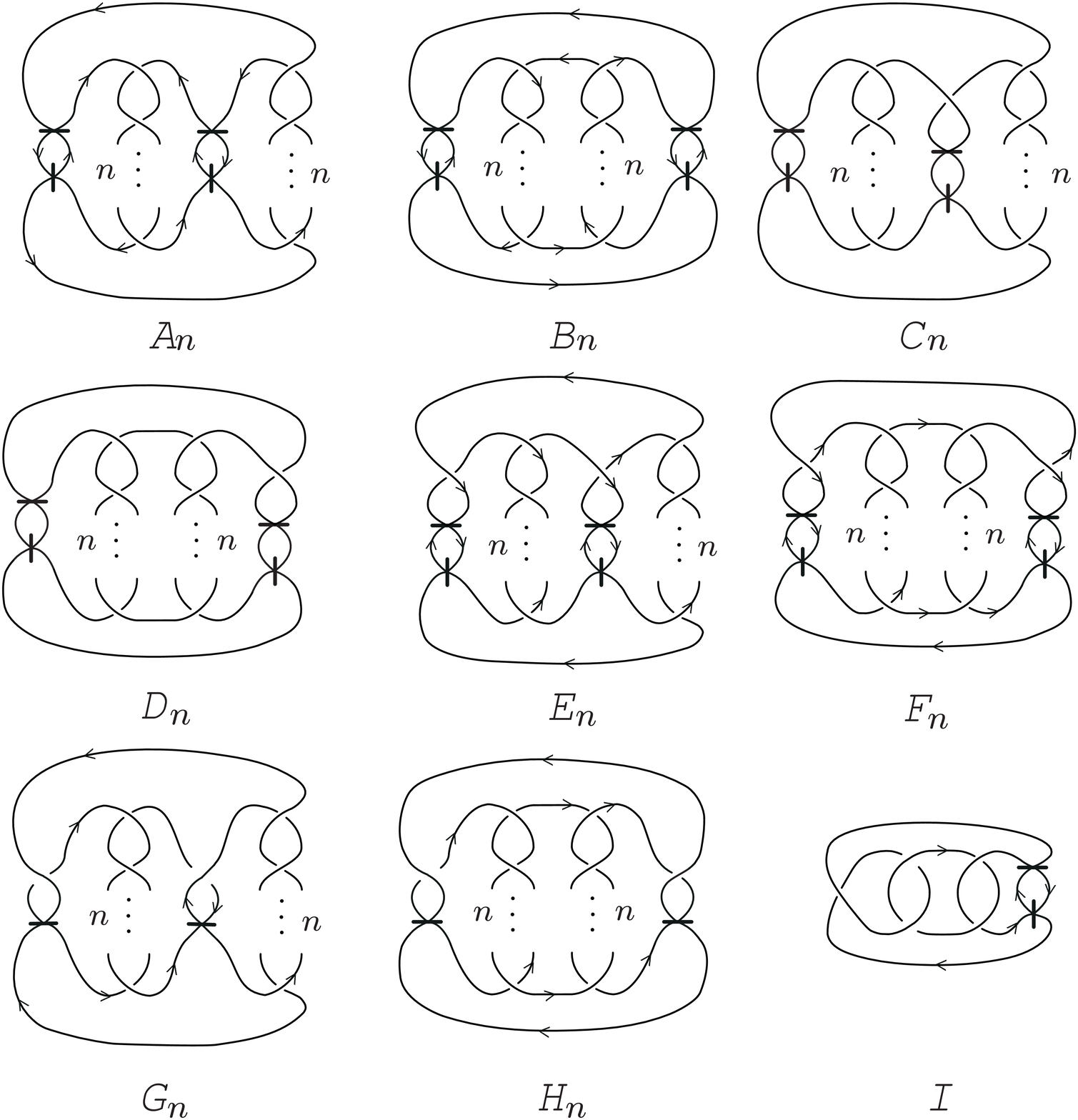} }
\caption{Marked graph diagrams of square type} 
\label{fig-table-sl}
\end{center}
\end{figure}

For an oriented surface-link $\mathcal L$,  we denote the same surface-link as $\mathcal L$ but with the opposite orientations on all the components of $\mathcal L$ by $-\mathcal L$. An oriented surface-link $\mathcal L$ is said to be {\it invertible} if it is equivalent to $-\mathcal L$; otherwise  {\it non-invertible}. The quandle cocycle invariant provides a diagrammatic method of detecting non-invertibility of surface-links (cf. \cite[Section 3]{CKS1}).

\begin{theorem}
 For every integer $k\geq0,$ the oriented surface-links presented by marked graph diagrams $G_{18k+3}$ and $G_{18k+15}$ in Figure \ref{fig-table-sl}   are non-invertible.
\end{theorem}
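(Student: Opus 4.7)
The plan is to exhibit a finite quandle $X$ and a $3$-cocycle $\theta \in Z^3_Q(X;A)$ for which $\Phi_\theta(\mathcal L) \neq \Phi_\theta(-\mathcal L)$ when $\mathcal L$ is presented by $G_{18k+3}$ or $G_{18k+15}$. As observed in \cite[Section 3]{CKS1}, such an inequality forces $\mathcal L$ to be non-invertible. I would take $X = R_3$ together with the Mochizuki-type $3$-cocycle $\theta \in Z^3_Q(R_3;\mathbb Z_3)$ from Example~\ref{ex-coc102}, since $R_3$ is small enough to enumerate colorings by hand yet has a $3$-cocycle whose coefficients distinguish the two generators $u, u^{-1}=u^2$ of $\mathbb Z_3$.

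The computation is carried out via Theorem~\ref{thm-cocyclem}. First I would write down $G_n$ explicitly for $n=18k+3$ and $n=18k+15$, identify the arcs $A(G_n)$ and the system of defining equations over $R_3$ coming from Definition~\ref{defn-qc-1}, and enumerate ${\rm Col}_{R_3}(G_n)$. Next I would fix sequences of Reidemeister moves trivializing the positive and negative resolutions $(G_n)_\pm$, record the sets $I^3_\pm$ of type-$R_3$ moves, and for each $i \in I^3_+ \amalg I^3_-$ read off the triple $(x_1,x_2,x_3)$ and the signs $\epsilon_{tm}(i), \epsilon_b(i)$ from the diagrammatic data. Summing the resulting Boltzman weights gives $\Phi_\theta(\mathcal L) = a+bu+cu^2 \in \mathbb Z[\mathbb Z_3]$. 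Since reversing all component orientations substitutes $u \mapsto u^{-1}=u^2$ in every weight (the cocycle is evaluated on the same triples but with every occurrence of $\epsilon_{tm}\epsilon_b$ flipped in sign), one obtains $\Phi_\theta(-\mathcal L) = a+cu+bu^2$, and the proof reduces to verifying $b \neq c$.

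The key combinatorial fact I would establish is that $G_n$ has a repeating $n$-fold structure along its square strand, so that both the colorings and the resolution sequences can be organised into a transfer-matrix-type recursion in $n$. This recursion takes place in the group ring $\mathbb Z[\mathbb Z_3]$, which is finite-dimensional, so its iterates are eventually periodic; I would argue that the relevant period divides $18$, pinning down the dependence of $\Phi_\theta(\mathcal L)$ on $n \bmod 18$. Then it suffices to check the single representative $n=3$ (respectively $n=15$) and verify, by direct enumeration of the colorings and weights, that the coefficients of $u$ and $u^2$ in $\Phi_\theta(\mathcal L)$ differ.

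The main obstacle is twofold. First, one must set up the transfer argument so that adding a block of $18$ square-type pieces to $G_n$ yields the identity transformation on the relevant state-space; this requires a careful analysis of how an $18$-long block affects both the colour-propagation and the Reidemeister-sequence data. Second, one must produce an explicit resolution of $G_3$ (and $G_{15}$), track the $\epsilon_{tm}, \epsilon_b$ signs through the Reidemeister sequences, and verify asymmetry of the final state-sum; this is essentially a bookkeeping computation, but enough care is needed to ensure that the source arcs and source regions of Definition~\ref{def-weightm} are correctly identified at every crossing and marked vertex.
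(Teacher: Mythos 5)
Your overall strategy coincides with the paper's: the authors also take $X=R_3$ with the $3$-cocycle $\theta$ of Example~\ref{ex-coc102}, compute $\Phi_\theta(\mathcal L)$ and $\Phi_\theta(-\mathcal L)$ via Theorem~\ref{thm-cocyclem}, obtain $3+6u$ versus $3+6u^2$ (with the roles swapped between $n=18k+3$ and $n=18k+15$), and conclude non-invertibility. So the choice of quandle, cocycle, and non-invertibility criterion is exactly right.

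There is, however, a genuine gap in your passage from $\Phi_\theta(\mathcal L)$ to $\Phi_\theta(-\mathcal L)$. You assert that reversing all orientations leaves every Boltzman weight evaluated ``on the same triples'' and merely flips the exponent, so that $\Phi_\theta(-\mathcal L)$ is obtained from $\Phi_\theta(\mathcal L)$ by $u\mapsto u^{-1}$. This is false as a pointwise statement: reversing the orientation reverses every co-orientation, so the source region of a triple point (respectively, the source region $R$ of the crossing $c$ in Definition~\ref{def-weightm}) moves to the antipodal octant (respectively, quadrant), and the colors read there are $\bigl((x_1\ast x_2)\ast x_3,\ x_2\ast x_3,\ x_3\bigr)$ rather than $(x_1,x_2,x_3)$. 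For the cocycle of Example~\ref{ex-coc102} this genuinely changes the value: e.g.\ $\theta(0,1,0)=u^{-1}$ while $\theta\bigl((0\ast1)\ast0,\,1\ast0,\,0\bigr)=\theta(1,2,0)=1$. (You are right that $\epsilon_{tm}$ is preserved and $\epsilon_b$ flips, so the exponent does negate; and for $R_3$ the coloring sets do coincide since $\ast=\bar\ast$. Only the triples are wrong.) The fix is simply to compute $\Phi_\theta(-\mathcal L)$ directly from the orientation-reversed diagram, as the paper does; the answer happens to be the coefficient-conjugate in this example, but that is an output of the computation, not an input. Separately, your periodicity-mod-$18$ transfer argument is only a plan, not a proof, but the paper likewise asserts the values for all $k$ without exhibiting the recursion, so I only flag that this step still requires the block analysis you describe.
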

\begin{proof}  Let $X$ be the dihedral quandle of order 3 and $\theta\in Z^3_Q(X;A)$ the 3-cocycle in Example \ref{ex-coc102}. 
Let $\mathcal L$ be the oriented surface-link presented by the oriented marked graph diagram $G_{n}$ in Figure \ref{fig-table-sl}.
Then $\Phi_\theta(\mathcal L)=3+6u$ and $\Phi_\theta(-\mathcal L)=3+6u^2$ if $n=18k+3$, and $\Phi_\theta(\mathcal L)=3+6u^2$ and $\Phi_\theta(-\mathcal L)=3+6u$ if $n=18k+15$ for any  integer $k\geq0$. This shows that $\mathcal L$ and $-\mathcal L$ are not equivalent for any $k\geq0$ and completes the proof.
\end{proof}  
 
On the other hand, it is shown that for every integer $m\geq1,$ the oriented surface-links presented by marked graph diagrams $F_{2m+1}$ and $H_{2m+1}$ in Figure \ref{fig-table-sl} are all non-invertible \cite[Theorem 7.4]{KJL}.


\section{Shadow quandle cocycle invariants of oriented surface-links}\label{sect-sqcoc}

In this section, we recall shadow  quandle cocycle invariants of oriented surface-links (cf. \cite{CKS}).

   Let $X$ be a quandle and let $\mathcal B$ be a broken surface diagram of an oriented surface-link $\mathcal L$. 
Let $S(\mathcal B)$ be the set of sheets of $\mathcal B$ and $R(\mathcal B)$ be the set of 
the complementary regions of $\mathcal B$ in $\mathbb R^3$.  
   Let $\mathcal{C}: S(\mathcal B)\rightarrow X$ be a coloring of $\mathcal B$.  A {\it shadow coloring} of $\mathcal B$ (extending a given coloring $\mathcal C$) is a map $\tilde{\mathcal{C}}: S(\mathcal B) \cup  R(\mathcal B) \rightarrow X$ satisfying the conditions: 
\begin{itemize}   
\item The restriction of $\tilde{\mathcal C}$ to $S(\mathcal B)$ is a given coloring $\mathcal C$.   

\item   If two adjacent regions $f_1$ and $f_2$  are separated by a sheet $e$ and  the co-orientation of $e$ points from $f_1$ to $f_2$, then $\tilde{\mathcal{C}}(f_1) \ast \tilde{\mathcal{C}}(e) = \tilde{\mathcal{C}}(f_2)$. 

\end{itemize}

Let ${\rm Col}_X^S(\mathcal B)$ be the set of all shadow colorings of $\mathcal B$ by $X$.

Let $\tilde{\mathcal C}$ be a shadow coloring of $\mathcal B$. Let $\tau$ be a triple point  and let $\mathcal R$ be the source region of $\tau$. Let $\theta\in Z^4_Q(X;A).$ Define the {\it shadow (Boltzman) weight} at $\tau$ by $$B_\theta^S(\tau,\tilde{\mathcal C})=\theta(y,x_1,x_2,x_3)^{\epsilon(\tau)},$$ 
where 
 $\epsilon(\tau)$ is the sign of $\tau$, 
$y$ is the color of $\mathcal R$ and $x_1,x_2 $ and $x_3$ are the colors of the bottom, the middle and the top sheets facing $\mathcal R$, respectively.   See Figure \ref{fig-swei} for $\epsilon(\tau)=1$.  
The {\it shadow partition function}  of  $\mathcal B$ (associated to $\theta$) is defined by 
$$\Phi_\theta^s(\mathcal B)=\sum_{\tilde{\mathcal C}\in{\rm Col}_X^S(\mathcal B)}\prod_{\tau\in T(\mathcal B)} B_\theta^S(\tau,\tilde{\mathcal C})\in \mathbb Z[A].$$

 \begin{figure}[ht]
\begin{center}
\resizebox{0.4\textwidth}{!}{%
  \includegraphics{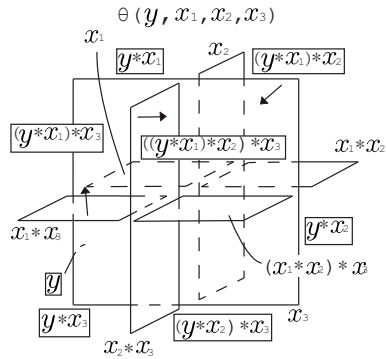} }
\caption{Shadow (Boltzman) weight at $\tau$ with $\epsilon(\tau)=1$}\label{fig-swei}
\end{center}
\end{figure}

\begin{theorem}[\cite{CKS}]\label{thm-scocycleb}
Let $\mathcal B$ be a broken surface diagram of an oriented surface-link $\mathcal L$. The shadow partition function  $\Phi_\theta^s(\mathcal B)$ does not depend on the choice of a broken surface diagram. Thus it is an invariant of $\mathcal L$. 
\end{theorem}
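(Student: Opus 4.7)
My plan is to prove invariance of $\Phi_\theta^s(\mathcal B)$ under each of the seven Roseman moves, and then appeal to the fact that two broken surface diagrams present equivalent oriented surface-links if and only if they are related by a finite sequence of such moves. The argument parallels the proof of Theorem~\ref{thm-cocycleb} for the ordinary cocycle invariant, but upgraded to track region colors and to use a $4$-cocycle identity in place of a $3$-cocycle identity.

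\textbf{Step 1 (bijection of shadow colorings).} For each Roseman move $\mathcal B \to \mathcal B'$, I construct a natural bijection between ${\rm Col}_X^S(\mathcal B)$ and ${\rm Col}_X^S(\mathcal B')$. Outside the support of the move, the sheet decompositions and the region decompositions of $\mathcal B$ and $\mathcal B'$ are in canonical correspondence, so colors transport verbatim. Inside the support, the quandle axioms together with the defining rule $\tilde{\mathcal C}(f_1)\ast\tilde{\mathcal C}(e)=\tilde{\mathcal C}(f_2)$ for adjacent regions force a unique extension of any boundary coloring on both sides.

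\textbf{Step 2 (weight preservation).} For each pair of corresponding shadow colorings I compare the products of shadow Boltzmann weights over the triple points inside the support. The seven Roseman moves split into three groups. (a) Moves whose supports contain no triple point (bubble moves, cusp moves, branch-point moves, etc.): both weight products equal $1$ trivially. (b) Moves that create or destroy a pair of triple points of opposite signs (such as a sheet passing through a double-point curve): the two triple points share identical source region color $y$ and identical bottom/middle/top sheet colors $(x_1,x_2,x_3)$, so their weights $\theta(y,x_1,x_2,x_3)^{+1}$ and $\theta(y,x_1,x_2,x_3)^{-1}$ cancel. (c) The tetrahedral Roseman move, which redistributes four triple points produced by five locally transversal sheets against one distinguished region: here invariance is delivered by the $4$-cocycle identity $\delta\theta(y,x_1,x_2,x_3,x_4)=1$.

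\textbf{Step 3 (dictionary for the tetrahedral move).} For case (c) I identify the five face-terms of $\delta\theta(y,x_1,x_2,x_3,x_4)$ with the four triple points on each side of the tetrahedral move together with the unaffected "degenerate" face. For each triple point in the local configuration, I must read off (i) the source region color, (ii) the ordered colors of its bottom, middle and top sheets, and (iii) its sign. The shift $x_j\mapsto x_j\ast x_i$ appearing in the coboundary formula corresponds geometrically to crossing the $i$-th over-sheet and applying the shadow-coloring transfer rule, so the abstract cocycle identity reads back, term by term, as the ratio of the two weight products.

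The main obstacle will be Step 3: the combinatorial bookkeeping matching the abstract $4$-simplex identity to the geometric tetrahedral move, including careful tracking of source regions, signs, and orderings of bottom/middle/top sheets at each of the four triple points on both sides. Once this dictionary is established, the remaining verifications in groups (a) and (b) are routine case-checks parallel to the unshadowed case of Theorem~\ref{thm-cocycleb}.
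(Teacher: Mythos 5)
The paper itself gives no proof of this theorem: it is quoted from \cite{CKS}, so there is nothing in-paper to compare against. Your strategy --- invariance under the seven Roseman moves, via a bijection of shadow colorings plus a comparison of weight products, with the cocycle condition absorbing the tetrahedral move --- is exactly the standard argument from the cited literature, so the overall route is the right one.

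Two concrete problems remain. First, your case (a) misclassifies the Roseman move in which a branch point passes through a transverse sheet: the support of that move \emph{does} contain a triple point on one side, so the weight product is not ``trivially $1$.'' At that triple point two of the three sheets are the two local sheets of the branch point and therefore carry the same color, so the weight has the form $\theta(y,x,x,z)^{\pm 1}$ or $\theta(y,x,z,z)^{\pm 1}$; its triviality is exactly the degeneracy condition built into quandle (as opposed to rack) cohomology, i.e., the reason $\theta$ must lie in $Z^4_Q(X;A)$ rather than in the rack cocycle group. Omitting this is a genuine gap, since without the degeneracy condition the statement is false. Second, you explicitly defer Step 3 --- the term-by-term identification of $\delta\theta(y,x_1,x_2,x_3,x_4)=1$ with the triple points on the two sides of the tetrahedral move, including source regions, signs, and the transfer rules --- and that bookkeeping is the entire substance of the proof; note in particular that the coboundary in the shadow setting shifts the region color $y\mapsto y\ast x_i$ as well as the sheet colors $x_j\mapsto x_j\ast x_i$, which your sketch of the dictionary does not mention. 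As written, the proposal is a correct plan rather than a proof.
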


We denote $\Phi_\theta^s(\mathcal B)$ by $\Phi_\theta^s(\mathcal L)$ and call it a {\it shadow quandle cocycle invariant} of $\mathcal L$ associated to $\theta\in Z^4_Q(X;A)$.

There is a generalized version of the shadow quandle cocycle invariant.  

Let $X$ be a quandle.  
The {\it associated group}, $G_X,$ of  $X$  is 
$ \langle x\in X \/ ;  \/ x\ast y=y^{-1}xy \quad (x,y\in X) \rangle$.  
An {\it $X$-set} is a set $Y$ equipped with a right action of the associated group $G_X$.   We denote by $y\ast g$ the image of an element $y\in Y$ by the action $g\in G_X$.  

Let $X$ be a quandle and $Y$ an $X$-set. For each positive integer $n$, let $C_n^R(X)_Y$ be the free abelian group generated by the elements $(y,x_1,\ldots,x_n)$ where $y\in Y$ and $x_1,\ldots,x_n\in X$.  Let $C_0(X)_Y=\mathbb{Z}(Y),$ the free abelian group on $Y$, and let $C_n^R(X)_Y$ be $\{0\}$ for $n<0.$
Define a homomorphism $\partial_n:C_n^R(X)_Y\rightarrow C_{n-1}^R(X)_Y$ by \begin{align*}
\partial_n(y,x_1,x_2,\ldots,x_n)&=\sum^{n}_{i=1} (-1)^i  [(y,x_1,x_2,\ldots,x_{i-1},x_{i+1},\ldots,x_n)\\&-(y\ast x_i,x_1\ast x_i,x_2\ast x_i,\ldots,x_{i-1}\ast x_i,x_{i+1},\ldots,x_n)]\end{align*}
for $n\geq 2$ and $\partial_n=0$ for $n\leq1.$  Then $C_*^R(X)_Y= \{C_n^R(X)_Y,\partial_n\}$ is a chain complex.  
This chain complex is due to R. Fenn, C. Rourke and B. Sanderson (\cite{FeRoSa1, FeRoSa2}).
    Let $D_n^Q(X)_Y$ be the subgroup of $C_n^R(X)_Y$ generated by $(y,x_1,\ldots,x_n)$ with $x_i=x_{i+1}$ for some $i\in\{1,\ldots,n-1\}$ if $n\geq2;$ otherwise let $D_n^Q(X)_Y= \{0\}$.  Then  $C_*^D(X)_Y= \{D_n^Q(X)_Y,\partial_n\}$ is a sub-complex of $C_*^R(X)_Y$. 
Put  $C_n^{Q}(X)_Y =C_n^R(X)_Y/D_n^Q(X)_Y$, and consider the quotient chain complex  
$C_*^{Q}(X)_Y= \{C_n^Q(X)_Y,\partial_n\}$.  
For an abelian group $A$, we define chain and cochain complexes by $C_*^Q(X,A)_Y=C_*^Q(X)_Y\otimes A$ and $C^*_Q(X,A)_Y={\rm Hom}(C_*^Q(X)_Y, A)$.   The homology and cohomology groups are denoted by $H_*^Q(X,A)_Y$ and $H^*_Q(X,A)_Y,$ respectively. For more details, see \cite{Ka2, KO}.

\bigskip

Let $X$ be a quandle, $Y$ an $X$-set and let $\mathcal B$ be a broken surface diagram of an oriented surface-link $\mathcal L$. 
Let $S(\mathcal B)$ be the set of sheets of $\mathcal B$ and $R(\mathcal B)$  the set of 
the complementary regions of $\mathcal B$ in $\mathbb R^3$.  
   Let $\mathcal{C}: S(\mathcal B)\rightarrow X$ be a coloring of $\mathcal B$.  A {\it shadow coloring} of $\mathcal B$ (extending a given coloring $\mathcal C$) by $(X,Y)$ 
   is a map $\tilde{\mathcal{C}}: S(\mathcal B) \cup  R(\mathcal B) \rightarrow X \cup Y $ satisfying the conditions:  
\begin{itemize}   
\item 
$\tilde{\mathcal{C}} (S(\mathcal B)) \subset X$ and $\tilde{\mathcal{C}} (R(\mathcal B)) \subset Y$. 

\item The restriction of $\tilde{\mathcal C}$ to $S(\mathcal B)$ is a given coloring $\mathcal C$.   

\item   If two adjacent regions $f_1$ and $f_2$ are separated by a sheet $e$ 
and the co-orientation of $e$ points from $f_1$ to $f_2$, then $\tilde{\mathcal{C}}(f_1)\ast \tilde{\mathcal{C}}(e) =\tilde{\mathcal{C}}(f_2)$.  

\end{itemize}

We denote by ${\rm Col}_{(X,Y)}^S(\mathcal B)$  the set of all shadow colorings of $\mathcal B$ by $(X,Y)$. 

\begin{proposition}[cf. \cite{CKS}]\label{prop-scol-a}
If $\mathcal B$ and $\mathcal B'$ present equivalent oriented surface-links, then there is a bijection between 
${\rm Col}_X^S(\mathcal B)$  
and ${\rm Col}_X^S(\mathcal B')$, and there is a bijection between 
${\rm Col}_{(X,Y)}^S(\mathcal B)$ 
and ${\rm Col}_{(X,Y)}^S(\mathcal B')$.  
\end{proposition}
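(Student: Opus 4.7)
The plan is to appeal to Roseman's theorem~\cite{Ro}: two broken surface diagrams $\mathcal B$ and $\mathcal B'$ present equivalent oriented surface-links if and only if they are related by a finite sequence of Roseman moves. Thus it suffices to construct, for each Roseman move $\mathcal B \leftrightarrow \mathcal B'$ supported in a $3$-ball $N \subset \mathbb R^3$, a natural bijection between ${\rm Col}_X^S(\mathcal B)$ and ${\rm Col}_X^S(\mathcal B')$, and separately between ${\rm Col}_{(X,Y)}^S(\mathcal B)$ and ${\rm Col}_{(X,Y)}^S(\mathcal B')$.

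First I would fix such an $N$ and observe that $\mathcal B$ and $\mathcal B'$ coincide outside $N$. Hence any shadow coloring $\tilde{\mathcal C}$ of $\mathcal B$ restricts to a partial shadow coloring outside $N$, and analogously for $\mathcal B'$. The candidate bijection sends $\tilde{\mathcal C}$ to the shadow coloring of $\mathcal B'$ that agrees with $\tilde{\mathcal C}$ on $\mathbb R^3 \setminus N$. To show this is well defined and a bijection, one must verify that for each choice of boundary data on $\partial N$ (sheet colors on sheets meeting $\partial N$ and region colors on regions meeting $\partial N$), the number of shadow-coloring extensions inside $N$ is the same for $\mathcal B$ and $\mathcal B'$. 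By the invertibility axiom (ii) of a quandle, crossing a sheet in either direction propagates the region color uniquely along a path in the complement, so the Roseman moves not involving triple points reduce to a direct check that every extension exists and is unique on both sides; this parallels the verification of invariance of the ordinary quandle coloring number.

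The main obstacle will be the Roseman moves involving triple points, especially the tetrahedral move. For these the verification amounts to showing that the shadow-coloring rules around a triple point are self-consistent, which is precisely where the right self-distributivity axiom (iii) is needed; axiom (i) handles the moves that create or cancel a double curve tangency. Diagrammatically, this is the same local computation used in \cite{CKS} to prove well-definedness of the shadow quandle cocycle invariant, now read as a bijection of coloring sets rather than an equality of Boltzmann weights. For the $(X,Y)$-variant the only difference is that region colors lie in $Y$ and are propagated by the right action of the associated group $G_X$ on $Y$; since this action factors through the defining relations $x \ast y = y^{-1} x y$, the same local analysis at each Roseman move yields the required bijection ${\rm Col}_{(X,Y)}^S(\mathcal B) \to {\rm Col}_{(X,Y)}^S(\mathcal B')$.
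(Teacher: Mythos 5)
The paper states this proposition without proof, deferring to \cite{CKS}, and your Roseman-move argument is exactly the standard proof given in that reference: fix the shadow coloring outside the ball supporting each move and verify that the extensions inside correspond bijectively, with ambient isotopies of $\mathbb{R}^3$ handled trivially. Your outline is correct; the one small imprecision is that right self-distributivity is already what makes the region-color propagation consistent around an ordinary double point curve (a loop around it crosses four semi-sheets and closes up precisely because $(f\ast x)\ast y=(f\ast y)\ast(x\ast y)$), so that axiom enters before the triple-point moves, not only at the tetrahedral move.
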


Let $\tilde{\mathcal C}$ be a shadow coloring of a broken surface diagram $\mathcal B$ by $(X,Y)$. Let $\tau$ be a triple point  and let $\mathcal R$ be the source region of $\tau$. Let $\theta\in Z^3_Q(X;A)_Y.$ Define the {\it shadow (Boltzman) weight} at $\tau$ by 
$$B_\theta^S(\tau,\tilde{\mathcal C})=\theta(y,x_1,x_2,x_3)^{\epsilon(\tau)},$$ 
where 
$\epsilon(\tau)$ is the sign of $\tau$, 
$y$ is the color of $\mathcal R$ and $x_1,x_2 $ and $x_3$ are the colors of the bottom, the middle and the top sheets facing $\mathcal R$, respectively.   See Figure \ref{fig-swei}.  The {\it shadow partition function} of $\mathcal B$ (associated to $\theta$) is defined by 
$$\Phi_\theta^s(\mathcal B)=\sum_{\tilde{\mathcal C}\in{\rm Col}_{(X,Y)}^S(\mathcal B)}\prod_{\tau\in T(\mathcal B)} B_\theta^S(\tau,\tilde{\mathcal C})\in \mathbb Z[A].$$

\begin{theorem}[cf. \cite{CKS}]\label{thm-scocyclebB}
Let $\mathcal B$ be a broken surface diagram of an oriented surface-link $\mathcal L$. The shadow partition function  $\Phi_\theta^s(\mathcal B)$ does not depend on the choice of a broken surface diagram. Thus it is an invariant of $\mathcal L$. 
\end{theorem}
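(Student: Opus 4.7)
The plan is to verify invariance of $\Phi_\theta^s(\mathcal B)$ under each of the seven Roseman moves on broken surface diagrams. Since two broken surface diagrams present equivalent oriented surface-links if and only if they are related by a finite sequence of Roseman moves, this is sufficient. By Proposition \ref{prop-scol-a}, for any two broken surface diagrams $\mathcal B$ and $\mathcal B'$ related by a single Roseman move there is a natural bijection $\Psi: {\rm Col}_{(X,Y)}^S(\mathcal B) \to {\rm Col}_{(X,Y)}^S(\mathcal B')$ given by keeping the coloring unchanged outside the region where the move is applied and extending uniquely inside. The task thus reduces to showing, for each corresponding pair $(\tilde{\mathcal C}, \Psi(\tilde{\mathcal C}))$, the equality
\[
\prod_{\tau \in T(\mathcal B)} B^S_\theta(\tau, \tilde{\mathcal C}) \;=\; \prod_{\tau' \in T(\mathcal B')} B^S_\theta(\tau', \Psi(\tilde{\mathcal C})).
\]

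Next I would classify the Roseman moves according to how they alter the set of triple points. Moves supported on a neighborhood containing no triple point (for example, the moves affecting only double curves or branch points) give the equality trivially, since the triple points outside this neighborhood match with identical source regions and colors. Moves that create or destroy a pair of triple points with opposite signs and identical assigned quadruples $(y, x_1, x_2, x_3)$ contribute $\theta(y,x_1,x_2,x_3) \cdot \theta(y,x_1,x_2,x_3)^{-1} = 1$, so these are also clear. The Roseman moves in which two adjacent sheets come into tangency produce triple points whose associated $4$-tuples lie in $D^Q_n(X)_Y$ (two consecutive entries are equal); these evaluate to $1$ because $\theta \in Z^3_Q(X;A)_Y$ descends to the quotient $C^Q_n(X)_Y$.

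The main obstacle is the Roseman move analogous to the tetrahedral move, which relates two configurations of four sheets in general position and exchanges one set of four triple points for another. Reading off the source regions and the colors of the bottom, middle, and top sheets at each of these triple points and comparing the two sides produces an identity that is precisely the $3$-cocycle condition for $\theta$ in $C^*_Q(X, A)_Y$. The key point is that the source region is colored by an element $y \in Y$ and that $\partial_n$ as defined on $C_n^R(X)_Y$ involves the action $y \ast x_i$ of $G_X$ on $Y$, so the coboundary $\delta\theta = 0$ naturally absorbs the way the source region is transported across sheets as one moves between adjacent regions. With this observation, the argument for the single-point $X$-set case in \cite{CKS} transfers verbatim, and we conclude that $\Phi_\theta^s(\mathcal B) = \Phi_\theta^s(\mathcal B')$. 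Thus $\Phi_\theta^s(\mathcal B)$ is an invariant of $\mathcal L$.
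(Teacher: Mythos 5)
The paper gives no proof of this theorem---it is stated with the citation ``cf.\ \cite{CKS}'' and left unproved, so there is nothing internal to compare against; your sketch is the standard Roseman-move argument from that reference, carried out on top of the coloring bijection of Proposition~\ref{prop-scol-a}. The outline is correct: moves supported away from triple points are trivial, moves creating or destroying a cancelling pair of triple points contribute inverse weights, the branch-point moves are absorbed by the degeneracy condition built into $Z^3_Q(X;A)_Y$, and the tetrahedral move reduces to the $3$-cocycle condition in $C^*_Q(X,A)_Y$, which, as you correctly observe, already encodes the transport of the region color $y$ under the $G_X$-action.
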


We denote $\Phi_\theta^s(\mathcal B)$ by $\Phi_\theta^s(\mathcal L)$ and  call it a {\it shadow quandle cocycle invariant} of $\mathcal L$ associated to $\theta\in Z^3_Q(X;A)_Y$. 


\section{How to compute shadow quandle cocycle invariants from marked graph diagrams}\label{sect-sqcocm}

In this section we give a method of computing shadow quandle cocycle invariants from marked graph diagrams.

Let $\Gamma$ be a marked graph diagram of an oriented surface-link $\mathcal L$. 
Let $A(\Gamma)$ be the set of arcs of $\Gamma$ and $R(\Gamma)$ 
the set of complementary regions  of $\Gamma$ in $\mathbb R^2$.  Let $X$ be a quandle and let $Y$ be an $X$-set.  
Let $\mathcal C: A(\Gamma)\rightarrow X$ be a coloring of $\Gamma$ by a quandle $X$.  
A {\it shadow coloring} of $\Gamma$ (extending a given coloring $\mathcal C$) by $X$ (or by $(X,Y)$, resp.) 
is a map $\tilde{\mathcal C}: A(\Gamma) \cup R(\Gamma) \rightarrow X$ 
(or a map $\tilde{\mathcal C}: A(\Gamma) \cup R(\Gamma) \rightarrow X \cup Y$, resp.)  
satisfying the conditions (2) and (3) (or the conditions (1)--(3), resp.):

\begin{itemize}
\item[(1)] $\tilde{\mathcal C} (A(\Gamma)) \subset X$ and $\tilde{\mathcal C} (R(\Gamma)) \subset Y$. 
\item[(2)] The restriction of $\tilde{\mathcal C}$ to $A(\Gamma)$ is a given coloring $\mathcal C$.  
\item[(3)] If two adjacent regions $f_1$ and $f_2$  are separated by an arc $e\in A(\Gamma)$  and  the co-orientation of $e$ points from $f_1$ to $f_2$, then $\tilde{\mathcal{C}}(f_1) \ast \tilde{\mathcal{C}}(e) = \tilde{\mathcal{C}}(f_2)$. 

\end{itemize}

Let ${\rm Col}_X^S(\Gamma)$ (or ${\rm Col}_{(X,Y)}^S(\Gamma)$, resp.) 
denote the set of all shadow colorings of $\Gamma$ by $X$ (or by $(X,Y)$, resp.).

\begin{theorem}\label{thm-scol}
Let $\Gamma$ be a marked graph diagram of an oriented surface-link $\mathcal L$ and $\mathcal B=\mathcal B(\Gamma)$ an associated broken surface diagram of $\Gamma.$  There is a bijection from ${\rm Col}_X^S(\Gamma)$ to ${\rm Col}_X^S(\mathcal B)$, and  a bijection  from ${\rm Col}_{(X,Y)}^S(\Gamma)$ to ${\rm Col}_{(X,Y)}^S(\mathcal B)$.  
\end{theorem}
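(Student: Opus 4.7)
The plan is to extend the fundamental quandle argument used in the proof of Theorem~\ref{thm-col} to the shadow setting by introducing shadow fundamental structures. Associated to the marked graph diagram $\Gamma$, define the pair $(Q(\Gamma), \widetilde{R}(\Gamma))$, where $Q(\Gamma)$ is the fundamental quandle used in the proof of Theorem~\ref{thm-col} and $\widetilde{R}(\Gamma)$ is the right $Q(\Gamma)$-set generated by $R(\Gamma)$ subject to the relations $f_1 \ast e = f_2$ whenever $f_1, f_2 \in R(\Gamma)$ are adjacent regions separated by an arc $e \in A(\Gamma)$ whose co-orientation points from $f_1$ to $f_2$. Define the analogous pair $(Q(\mathcal B), \widetilde{R}(\mathcal B))$ for the broken surface diagram $\mathcal B$ using sheets and complementary $3$-dimensional regions of $\mathcal B$ in $\mathbb R^3$, with relations imposed across double point curves.

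Under the natural identifications
\[
{\rm Col}_X^S(\Gamma) \;\cong\; \{(\varphi, \widetilde{\varphi}) \mid \varphi \in {\rm Hom}(Q(\Gamma), X), \ \widetilde{\varphi} \colon \widetilde{R}(\Gamma) \to X \text{ is } \varphi\text{-equivariant}\},
\]
and analogously with target $Y$ for $\widetilde{\varphi}$ in the case of ${\rm Col}_{(X,Y)}^S(\Gamma)$, together with the corresponding identifications for $\mathcal B$, the theorem reduces to extending the quandle isomorphism $Q(\Gamma) \to Q(\mathcal B)$ already established in Theorem~\ref{thm-col} to an isomorphism of shadow fundamental pairs, that is, to a bijection $\widetilde{R}(\Gamma) \to \widetilde{R}(\mathcal B)$ that intertwines the quandle actions.

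To produce this bijection I would follow the cylindrical construction of $\mathcal B = \mathcal B(\Gamma)$ recalled in Section~\ref{sect-mgd}, tracking regions in the same way Ashihara tracks arcs and sheets. The slice $\mathbb R^2 \times \{0\}$ meets $\mathcal B$ in $\phi(\Gamma_0)$, so the regions of $\Gamma$ correspond exactly to the components of the complement of $\Gamma_0$ in that slice, each extending locally to a $3$-region of $\mathcal B$. As $t$ varies through $(0,2)$ or $(-2,0)$, each Reidemeister move $D_i \to D_{i+1}$ (or $D_j' \to D_{j+1}'$) creates double point curves across which the adjacency relation on the nearby $3$-regions is exactly of the form $f_1 \ast e = f_2$; these relations coincide with (or are consequences of) those defining $\widetilde{R}(\Gamma)$. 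At the capping levels $t = \pm 2$, each trivial component is filled by a $2$-disk, creating new bounded $3$-regions, each of which is $Q(\mathcal B)$-related, via the action of the quandle element carried by the capping disk's sheet, to a region meeting the slice $\mathbb R^2 \times \{0\}$. This exhibits $\widetilde{R}(\mathcal B)$ as generated, as a $Q(\mathcal B)$-set, by the images of $R(\Gamma)$ subject to precisely the relations defining $\widetilde{R}(\Gamma)$, yielding the desired isomorphism of shadow fundamental pairs, and the bijections on shadow colorings follow by restriction of the induced bijections of Hom-sets.

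The main obstacle is verifying compatibility at the capping levels: one must check that the new bounded $3$-regions arising from the disks $B_j^\pm$ are all accounted for by the $Q(\mathcal B)$-orbit of regions that already meet the slice $\mathbb R^2 \times \{0\}$, and that the cappings introduce no independent new relation on $\widetilde{R}(\mathcal B)$ beyond those coming from $\widetilde{R}(\Gamma)$. A careful local analysis of how the disks $B_1^\pm, \dots, B_\mu^\pm$ and $B_1'^\pm, \dots, B_\lambda'^\pm$ partition neighborhoods of $\mathbb R^2 \times \{\pm 2\}$ into $3$-regions, parallel to the treatment of sheets and arcs in Ashihara's argument, should suffice to close this gap.
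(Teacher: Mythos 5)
Your proposal is correct and takes essentially the same route as the paper: the paper's proof simply restricts a shadow coloring of $\mathcal B$ to the $0$-level cross-section to get a shadow coloring of $\Gamma$ and invokes the same movie-tracking argument as in \cite{As} (Reidemeister strata and capping disks create no independent region data) to see this is a bijection. You package that identical geometric content in the Hom-set formalism of a fundamental $X$-set, mirroring the paper's proof of Theorem~\ref{thm-col}, which is a harmless difference of presentation rather than of substance.
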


\begin{proof}
Consider a shadow coloring of $\mathcal B$. The $0$-level cross-section with the colors induced by  the shadow coloring of $\mathcal B$ is a shadow coloring of $\Gamma.$ By the same argument as in \cite{As}, we see that this gives a bijection from ${\rm Col}_X^S(\Gamma)$ to ${\rm Col}_X^S(\mathcal B)$ and a bijection ${\rm Col}_{(X,Y)}^S(\Gamma)$ to ${\rm Col}_{(X,Y)}^S(\mathcal B)$.  
\end{proof}

\begin{corollary}
If $\Gamma$ and $\Gamma'$ present equivalent oriented surface-links, then there is a bijection from ${\rm Col}_X^S(\Gamma)$ to ${\rm Col}_X^S(\Gamma')$, and there is a bijection from ${\rm Col}_{(X,Y)}^S(\Gamma)$ to ${\rm Col}_{(X,Y)}^S(\Gamma')$.  
\end{corollary}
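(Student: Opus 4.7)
The plan is to derive the corollary by composing two bijections that are already available: the bijection of Theorem \ref{thm-scol} relating shadow colorings on a marked graph diagram with shadow colorings on its associated broken surface diagram, and the bijection of Proposition \ref{prop-scol-a} relating shadow colorings on broken surface diagrams of equivalent surface-links.

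More precisely, I would first pick associated broken surface diagrams $\mathcal B = \mathcal B(\Gamma)$ and $\mathcal B' = \mathcal B(\Gamma')$ of $\Gamma$ and $\Gamma'$, respectively. By the construction in Section \ref{sect-mgd}, $\mathcal B$ presents the surface-link $F(\Gamma)$ and $\mathcal B'$ presents $F(\Gamma')$, and these are equivalent to the surface-links presented by $\Gamma$ and $\Gamma'$. Since by hypothesis $\Gamma$ and $\Gamma'$ present equivalent oriented surface-links, $\mathcal B$ and $\mathcal B'$ present equivalent oriented surface-links as well.

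Next, by Theorem \ref{thm-scol} there are bijections
\[
\Phi : {\rm Col}_X^S(\Gamma) \longrightarrow {\rm Col}_X^S(\mathcal B), \qquad
\Phi' : {\rm Col}_X^S(\Gamma') \longrightarrow {\rm Col}_X^S(\mathcal B'),
\]
and by Proposition \ref{prop-scol-a} there is a bijection
\[
\Psi : {\rm Col}_X^S(\mathcal B) \longrightarrow {\rm Col}_X^S(\mathcal B').
\]
The composition $(\Phi')^{-1} \circ \Psi \circ \Phi$ is then the desired bijection from ${\rm Col}_X^S(\Gamma)$ to ${\rm Col}_X^S(\Gamma')$. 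The same three-step composition, applied with the $(X,Y)$-versions of Theorem \ref{thm-scol} and Proposition \ref{prop-scol-a}, produces the bijection from ${\rm Col}_{(X,Y)}^S(\Gamma)$ to ${\rm Col}_{(X,Y)}^S(\Gamma')$.

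Since each of the three ingredient maps is already established, there is no substantial obstacle; the only mild point to be careful about is making sure one cites the correct version of Proposition \ref{prop-scol-a}, namely the one for the $X$-set $Y$ for the second bijection. The proof is therefore essentially a formal diagram chase and can be written in a few lines.
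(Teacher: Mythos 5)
Your proposal is correct and follows exactly the paper's own argument: the paper likewise passes to the associated broken surface diagrams $\mathcal B(\Gamma)$ and $\mathcal B(\Gamma')$ and composes the bijections from Theorem~\ref{thm-scol} with the one from Proposition~\ref{prop-scol-a}. Your write-up merely makes the composition explicit.
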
 

\begin{proof} 
Let $\mathcal B(\Gamma)$ and $\mathcal B(\Gamma')$ be broken surface diagrams associated to $\Gamma$ and $\Gamma'$, respectively.  
By Proposition~\ref{prop-scol-a} 
and Theorem~\ref{thm-scol}, we see the result. 
\end{proof} 

\bigskip 

Let $\Gamma$ be a marked graph diagram of an oriented surface-link $\mathcal L$ 
and let 
$\Gamma_+=D_1 \rightarrow D_2 \rightarrow\cdots\rightarrow D_r=O$, 
$\Gamma_-=D_1' \rightarrow D_2' \rightarrow\cdots\rightarrow D_s'=O'$, 
$\epsilon_{tm}$ and $\epsilon_{b}$ be as in Section~\ref{sect-qcocm}. 
Let $\tilde{\mathcal{C}}: A(\Gamma) \cup R(\Gamma) \rightarrow X$ or $\tilde{\mathcal{C}}: A(\Gamma) \cup R(\Gamma) \rightarrow X \cup Y$ 
be a shadow coloring of  $\Gamma$. Let $i\in I^3_+$ (or $j\in {I^3_-}$).  Let $R$ be the source region of the crossing $c$ between the top arc and the middle arc in  $D_{i}\cap B_{(i)}$ (or $D_j'\cap B_{(j)}'$).  Let $R'$ be the opposite region of $R$ with respect to the top arc.

Let $x_1$, $x_2$ and $x_3$ be as in Section~\ref{sect-qcocm}. 
There are two cases, the bottom arc intersects with the source region $R$ or not. If not, we consider two cases, $\epsilon_{b}(i)=1$ or $\epsilon_{b}(i)=-1$ (See Figure \ref{fig-strip}).  In the case where the bottom arc hits the source region $R$, the region $R$ is divided by the bottom arc. Let $y$ be the color of the divided region of $R$ such that the co-orientation of the bottom arc points from that region. In the case where the bottom arc does not intersect with the source region $R$ and $\epsilon_{b}(i)=1$, let  $y$ be the element $s=\tilde{\mathcal{C}}(R)$. In the case where the bottom arc does not meet the source region $R$ and $\epsilon_{b}(i)=-1$, let  $y$ be the element $s\ast\overline{x_1}$, where $s=\tilde{\mathcal{C}}(R)$.  For $j\in I^3_-$, let $y$ be the element defined in the same way with $i\in I^3_+.$

\begin{definition}\label{def-sweim}
Let $\theta\in Z^4_Q(X;A)$ be a $4$-cocycle or let $\theta\in Z^3_Q(X;A)_Y$ be a $3$-cocycle. The {\it shadow (Boltzman) weight}  for $i\in I^3_+\amalg I^3_-$ is defined by $$B^S_\theta(i,\tilde{\mathcal{C}})=\theta(y,x_1,x_2,x_3)^{\epsilon_{tm}(i)\epsilon_{b}(i)}.$$
\end{definition}

\newpage
 \begin{figure}[ht]
\begin{center}
\resizebox{0.97\textwidth}{!}{%
  \includegraphics{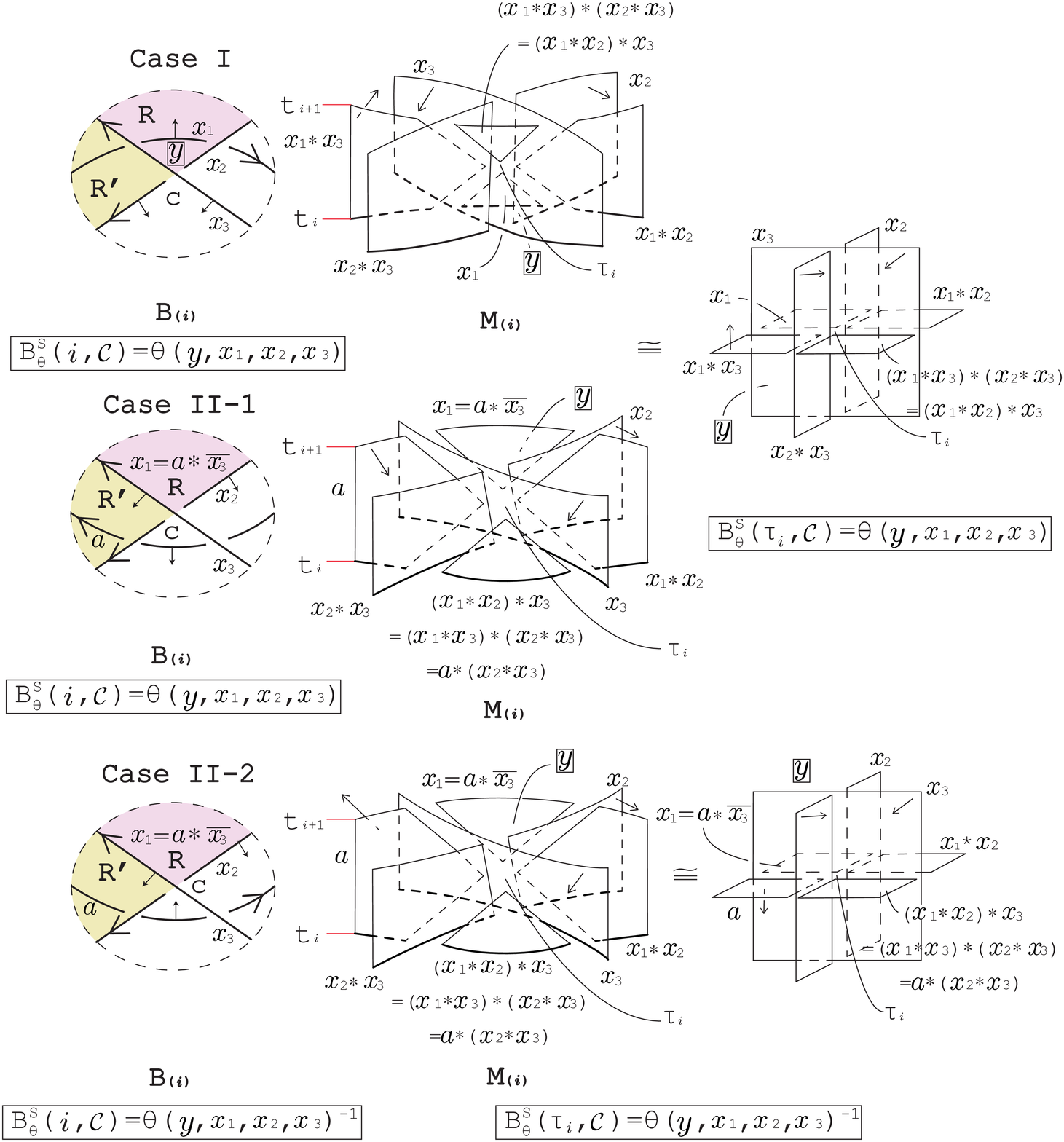} }
\caption{The triple point $\tau_i$}\label{fig-strip}
\end{center}
\end{figure}

\begin{definition}\label{def-scocyclem}
Let $\Gamma$ be a marked graph diagram of an oriented surface-link $\mathcal L.$  The {\it shadow partition function} of $\Gamma$ (associated to  $\theta$)  is defined by  
\begin{align*}
\Phi_\theta^s(\Gamma)= \sum_{\tilde{\mathcal{C}}} \biggl(\prod_{i\in I^3_+ }B^S_\theta(i,\tilde{\mathcal{C}})\prod_{j\in I^3_- }B^S_\theta(j,\tilde{\mathcal{C}})^{-1}\biggr),\end{align*}
where $\tilde{\mathcal{C}}$ runs all shadow colorings of $\Gamma$ by $X$ when $\theta\in Z^4_Q(X;A)$ or  
all shadow colorings of $\Gamma$ by $(X,Y)$ when $\theta\in Z^3_Q(X;A)_Y$.  
\end{definition}

\begin{theorem}\label{thm-scocyclem} 

Let $\mathcal L$ be an oriented surface-link and let $\Gamma$ be a marked graph diagram presenting $\mathcal L$.  For any  $\theta\in Z^4_Q(X;A)$ or  $\theta\in Z^3_Q(X;A)_Y$,  $\Phi_\theta^s(\mathcal L)=\Phi_\theta^s(\Gamma)$.   
\end{theorem}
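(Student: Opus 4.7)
The plan is to adapt the proof of Theorem~\ref{thm-cocyclem} to the shadow setting, adding a geometric analysis of region colors. First, I would pass to the broken surface diagram $\mathcal B = \mathcal B(\Gamma)$ associated to $\Gamma$; by Theorem~\ref{thm-scocyclebB}, it suffices to prove $\Phi^s_\theta(\Gamma) = \Phi^s_\theta(\mathcal B)$. Theorem~\ref{thm-scol} provides a bijection between shadow colorings of $\Gamma$ and of $\mathcal B$, treating the cases $\theta \in Z^4_Q(X;A)$ and $\theta \in Z^3_Q(X;A)_Y$ in parallel, so the problem reduces to establishing, for each shadow coloring $\tilde{\mathcal C}$, the per-coloring identity
$$\prod_{i\in I^3_+} B^S_\theta(i,\tilde{\mathcal C}) \prod_{j\in I^3_-} B^S_\theta(j,\tilde{\mathcal C})^{-1} = \prod_{\tau\in T(\mathcal B)} B^S_\theta(\tau,\tilde{\mathcal C}).$$

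Next, I would recycle everything from the proof of Theorem~\ref{thm-cocyclem} that does not involve region colors. In particular, the identification (\ref{eq-tr}) of $T(\mathcal B)$ with $\{\tau_i \mid i \in I^3_+\} \cup \{\tau_j' \mid j \in I^3_-\}$, the sign relations $\epsilon(\tau_i) = \epsilon_{tm}(i)\epsilon_b(i)$ and $\epsilon(\tau_j') = -\epsilon_{tm}(j)\epsilon_b(j)$, and the matching of the colors $x_1, x_2, x_3$ of the bottom, middle, and top sheets facing the 3D source region $\mathcal R$ of $\tau_i$ or $\tau_j'$ all carry over verbatim. This already makes the last three arguments of $\theta$ and the exponent signs agree between the two sides, so only the fourth argument $y$ remains to be treated.

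The genuinely new step is to verify that the element $y$ appearing in Definition~\ref{def-sweim} equals $\tilde{\mathcal C}(\mathcal R)$. For a fixed $i \in I^3_+$, the 3D source region $\mathcal R$ of $\tau_i$ lies in one of the two pieces into which the (extension of the) bottom sheet cuts either $R \times [t_i, t_{i+1}]$ or $R' \times [t_i, t_{i+1}]$, where $R$ is the 2D source region of the crossing $c$ between top and middle arcs and $R'$ is its opposite across the top arc. I would carry out a four-way case analysis according to whether the bottom arc meets $R$ and to the sign $\epsilon_b(i) = \pm 1$, reading off from Figure~\ref{fig-strip} in each case which piece is $\mathcal R$. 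Applying the shadow rule $\tilde{\mathcal C}(f_1)\ast \tilde{\mathcal C}(e) = \tilde{\mathcal C}(f_2)$ across the bottom sheet will then show that $\tilde{\mathcal C}(\mathcal R)$ is precisely the element $y$ prescribed in Definition~\ref{def-sweim}, namely a color of a sub-piece of $R$ (when the bottom arc meets $R$), the color $s = \tilde{\mathcal C}(R)$ (when the bottom arc misses $R$ and $\epsilon_b(i)=1$), or $s\ast\overline{x_1}$ (when the bottom arc misses $R$ and $\epsilon_b(i)=-1$). The analysis for $j \in I^3_-$ is parallel, with the roles of the two $t$-pieces exchanged.

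The hard part will be this four-way geometric case analysis: one must carefully track how the bottom sheet, which in general does not lie directly over the 2D source region of the crossing, nevertheless partitions the 3D neighborhood of $\tau_i$, and how its co-orientation combined with $\epsilon_b(i)$ selects the correct piece as $\mathcal R$. Once this identification of $y$ is verified in all sub-cases, the equalities $B^S_\theta(i,\tilde{\mathcal C}) = B^S_\theta(\tau_i,\tilde{\mathcal C})$ for $i\in I^3_+$ and $B^S_\theta(j,\tilde{\mathcal C}) = B^S_\theta(\tau_j',\tilde{\mathcal C})^{-1}$ for $j\in I^3_-$ follow immediately, yielding the product identity and hence Theorem~\ref{thm-scocyclem}.
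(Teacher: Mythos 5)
Your proposal is correct and follows essentially the same route as the paper's proof: reduce to the associated broken surface diagram via Theorems~\ref{thm-scocyclebB} and \ref{thm-scol}, reuse the triple-point identification and sign relations from the proof of Theorem~\ref{thm-cocyclem}, and then verify that the first coordinate $y$ of the shadow weight equals $\tilde{\mathcal C}(\mathcal R)$ by the case analysis on whether the bottom arc meets the source region $R$ and on $\epsilon_b(i)$. The paper organizes this as two cases (with the $\epsilon_b$ split only needed when the bottom arc misses $R$), but the content is the same.
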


\begin{proof}
Let $\mathcal B=\mathcal B(\Gamma)$ be a broken surface diagram associated to $\Gamma$. It suffices to show that $\Phi_\theta^s(\mathcal B)=\Phi_\theta^s(\Gamma)$.   

We define  $M_{(i)}$ and $M_{(j)}'$  for any $i\in I^3_+$ and $j\in I^3_-$ as in the proof of Theorem \ref{thm-cocyclem}.  
We have that $T(\mathcal B)=  \{ \tau_i \mid i \in I^3_+ \}  \cup \{ \tau_j' \mid  j \in I^3_- \}$,  
where $\tau_i$ is the triple point in $M_{(i)}$ for $i\in I^3_+$  and $\tau_j'$ is the  triple point in $M_{(j)}'$ for  $j\in I^3_-$.  

Let $\tilde{\mathcal C}$ be a shadow coloring of $\mathcal B$ by $X$ when $\theta\in Z^4_Q(X;A)$ or 
a shadow coloring of $\mathcal B$ by $(X,Y)$ when $\theta\in Z^3_Q(X;A)_Y$, 
and let  $\mathcal C\in{\rm Col}_X(\mathcal B)$ be the restriction of $\tilde{\mathcal C}$ to the set $S(\mathcal B)$. 

We show that 
$B^S_\theta(i,\tilde{\mathcal C})=B^S_\theta(\tau_i,\tilde{\mathcal C})$ for each $i\in I^3_+$. 

The exponents appearing in  $B^S_\theta(i,\tilde{\mathcal C})$ and $B^S_\theta(\tau_i,\tilde{\mathcal C})$ are identical.  The second, the third and the fourth coordinates of $B^S_\theta(\tau,\tilde{\mathcal C})$ are the same as $B_\theta(\tau,\mathcal C)$ for every triple point $\tau.$ Also, the second, third and fourth coordinates of $B^S_\theta(i,\tilde{\mathcal C})$  are the same as $B_\theta(i,\mathcal C)$ for each $i\in I^3_+$. Combining these facts, the second, the third and the fourth coordinates of $B^S_\theta(i,\tilde{\mathcal C})$ are the same as those of $B^S_\theta(\tau_i,\tilde{\mathcal C})$ for any $i\in I^3_+$.

It remains to show that the first coordinate of 
$B^S_\theta(\tau_i,\tilde{\mathcal C})$ is identical with that of 
$B^S_\theta(i,\tilde{\mathcal C})$ for any $i\in I^3_+$. The first coordinate of 
$B^S_\theta(\tau_i,\tilde{\mathcal C})$ is the color of the source region $\mathcal R$ of the triple point $\tau_i.$


{\bf Case I} : The bottom arc intersects with the source region $R$ of the crossing between the top and middle arc  in $B_{(i)}.$

The top (or the middle, resp.) sheet corresponds to the top (or the middle, resp.) arc times $[t_i,t_{i+1}].$ Also, the quadrant between the top and middle sheets with the co-orientations outward is divided into two ($3$-dimensional) regions 
by a bottom sheet whose color is same as that of the bottom arc in $R$. Therefore, the first coordinate of $B^S_\theta(\tau_i,\tilde{\mathcal C})$ is $\tilde{\mathcal C}(R),$ where $R$ is the source region of the crossing $c$ between the top and middle arcs in $B_{(i)}.$ Therefore $B^S_\theta(i,\tilde{\mathcal C})=B^S_\theta(\tau_i,\tilde{\mathcal C})$ for all $i\in I^3_+.$
 

{\bf Case II} : 
The bottom arc does not hit the source region $R$ of the crossing between the top and middle arc in $B_{(i)}.$

Let $\tilde{\mathcal C}(R)=s.$ The quadrant corresponding to $R\times[t_i,t_{i+1}]$ is divided into two ($3$-dimensional) regions 
by a bottom sheet whose color is $x_1=a\ast \overline{x_3}.$ If $\epsilon_b(i)=1$, then the co-orientation of the bottom sheet in that quadrant is from the region which has a color $s$ (see the case II-1 in Figure \ref{fig-strip}). Therefore the color $y$ of the source region $\mathcal R$ of the triple point $\tau_i$ is $s.$ Otherwise, the co-orientation of the bottom sheet in that quadrant points to the region whose color is $s$ (see the case II-2 in Figure \ref{fig-strip}). In addition, the color of the bottom sheet in that quadrant is $x_1.$ Thus the color $y$ of $\mathcal R$ is $s\ast\overline{x_1}.$ Therefore $B^S_\theta(i,\tilde{\mathcal C})=B^S_\theta(\tau_i,\tilde{\mathcal C})$ for all $i\in I^3_+.$

\smallskip
For $j\in I^3_-,$ it is similarly seen that $B^S_\theta(j,\tilde{\mathcal C})=B^S_\theta(\tau_j',\tilde{\mathcal C})^{-1}$.  

Hence  we have  $\Phi_\theta^s(\mathcal B)=\Phi_\theta^s(\Gamma)$ for all $j\in I^3_-.$     
\end{proof}


\section{Symmetric quandle cocycle invariants of unoriented surface-links}\label{sect-syqcoc}

This section is devoted to recalling symmetric quandle cocycle invariants of unoriented surface-links (cf.  \cite{Ka2, KO}). 

Let $X$ be a quandle. A map $\rho:X\rightarrow X$ is a {\it good involution} if it is an involution (i.e., $\rho\circ\rho={\rm id}$) such that $\rho(x\ast y)=\rho(x)\ast y$ and $x\ast\rho(y)=x\ast\bar y$ for any $x,y\in X.$ Such a pair $(X,\rho)$ is called a {\it quandle with a good involution} or a {\it symmetric quandle.}

\begin{example}(\cite{Ka2, KO})
Let $G$ be a group. The inversion, inv$(G):G\rightarrow G;g\mapsto g^{-1}$, is a good involution of conj$(G)$. We call $({\rm conj}(G),{\rm inv}(G))$ the {\it conjugation symmetric quandle.}
\end{example}

The {\it associated group}, $G_{(X,\rho)},$ of a symmetric quandle $(X,\rho)$ is  
$G_{(X,\rho)} = \langle x\in X \/ ; \/ x\ast y=y^{-1}xy~(x,y\in X),~\rho(x)=x^{-1}~ (x\in X) \rangle$.  
An {\it $(X,\rho)$-set} is a set $Y$ equipped with a right action of the associated group $G_{(X,\rho)}.$ We denote by $y\ast g$ the image of an element $y\in Y$ by the action $g\in G_{(X,\rho)}.$

Let $(X,\rho)$ be a symmetric quandle and $Y$ an $(X,\rho)$-set. 
Let $C_*^R(X)_Y=\{C_n^R(X)_Y,\partial_n\}$ be the chain complex of $X$ with $Y$, and 
$C_*^D(X)_Y=\{D_n^Q(X)_Y,\partial_n\}$ be the sub-complex of  $C_*^R(X)_Y$ 
as in Section~\ref{sect-sqcoc}. 

Let $D_n^\rho(X)_Y$ be the subgroup of $C_n^R(X)_Y$ generated by $$(y,x_1,\ldots,x_n)+(y\ast x_j,x_1\ast x_j,\ldots, x_{j-1}\ast x_j,\rho(x_j),x_{j+1}, \cdots,x_n)$$ for $j\in{1,\ldots,n}$ if $n\geq2;$ otherwise let $D_n^\rho(X)_Y= \{0\}$.   

Define $C_n^{Q,\rho}(X)_Y$ to be $C_n^R(X)_Y/(D_n^Q(X)_Y+D_n^\rho(X)_Y)$, and we have the 
quotient complex $C_*^{Q,\rho}(X)_Y = \{ C_n^{Q,\rho}(X)_Y, \partial_n  \}$. 
For an abelian group $A$, we define  chain and cochain complexes by $C_*^{Q,\rho}(X,A)_Y=C_*^{Q,\rho}(X)_Y\otimes A$ and $C^*_{Q,\rho}(X,A)_Y={\rm Hom}(C_*^{Q,\rho}(X)_Y, A)$, respectively. The homology and cohomology groups are denoted by 
$H_*^{Q,\rho}(X,A)_Y$ and $H^*_{Q,\rho}(X,A)_Y,$ respectively. For  details, see \cite{Ka2, KO}.

Let $\mathcal B$ be an unoriented broken surface diagram.   
When we divide over-sheets at the double curves, we call the sheets of the result {\it semi-sheets} of $\mathcal B$.   
Each semi-sheet is a compact orientable surface in $\mathbb R^3$ (cf. \cite{Ka1}).

Consider an assignment of normal orientation and an element of $X$ to each semi-sheet of $\mathcal B$. 
A {\it basic inversion} is an operation which reverses the normal orientation of a semi-sheet and changes the element $x$ assigned to the semi-sheet by $\rho(x).$ See Figure \ref{fig-bi}. 

 \begin{figure}[ht]
\begin{center}
\resizebox{0.7\textwidth}{!}{%
  \includegraphics{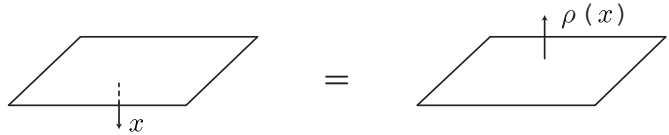} }
\caption{A basic inversion}\label{fig-bi}
\end{center}
\end{figure}

We would rather use the terminology `normal orientation'  than `co-orientation' when $\mathcal B$ is an unoriented broken surface diagram.

An {\it $(X,\rho)$-coloring} of $\mathcal B$ is the equivalence class of an assignment of a normal orientation and an element of $X$ to each semi-sheet of $\mathcal B$ satisfying the coloring condition below. Here the equivalence relation is generated by basic inversions. 
\begin{itemize}
\item[$\bullet$] 
By basic inversions, assume the normal orientations of semi-sheets around a double point curve to be  
as in Figure \ref{fig-colub}.  Then $x_1 \ast x_3 = x_2$ and $x_3 = x_4$.  
\end{itemize}

 \begin{figure}[ht]
\begin{center}
\resizebox{0.27\textwidth}{!}{%
  \includegraphics{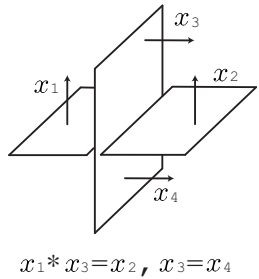} }
\caption{Coloring conditions}\label{fig-colub}
\end{center}
\end{figure}

Let $Y$ be an $(X,\rho)$-set. An {\it $(X,\rho)_Y$-coloring} of $\mathcal B$ is an $(X,\rho)$-coloring of $\mathcal B$ with an assignment of an element of $Y$ to each complementary region of $\mathcal B$ satisfying the following condition.  
\begin{itemize}
\item[$\bullet$] Suppose that adjacent regions $f_1$ and $f_2$ separated by a semi-sheet  $e$ are labeled by $y_1$ and $y_2$. If the semi-sheet $e$ is labeled by $x$ and the normal orientation of $e$ points from $f_1$ to $f_2$, then $y_1\ast x=y_2.$ 
\end{itemize}

%

\begin{proposition}[\cite{Ka2, KO}] 
Let $(X,\rho)$ be a symmetric quandle and $Y$ an $(X,\rho)$-set. If two broken surface diagrams present equivalent unoriented surface-links, then there is a bijection between the sets of $(X,\rho)$-colorings of the broken surface diagrams, and there is a bijection between the sets of $(X,\rho)_Y$-colorings of them.
\end{proposition}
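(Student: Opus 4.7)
The plan is to reduce the proposition to an invariance check under the Roseman moves, exploiting the cited theorem that any two broken surface diagrams presenting equivalent unoriented surface-links are related by a finite sequence of such moves (together with ambient isotopies of $\mathbb R^3$). It therefore suffices to exhibit, for each Roseman move $\mathcal B \leftrightarrow \mathcal B'$, a bijection $\Psi_{\mathcal B, \mathcal B'} : \mathrm{Col}_{(X,\rho)}(\mathcal B) \to \mathrm{Col}_{(X,\rho)}(\mathcal B')$ (and, in the shadow case, a corresponding $\mathrm{Col}_{(X,\rho)_Y}(\mathcal B) \to \mathrm{Col}_{(X,\rho)_Y}(\mathcal B')$), and then compose these over the sequence of moves relating any two diagrams of the given surface-link.

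To construct $\Psi_{\mathcal B, \mathcal B'}$ for a given Roseman move, I would first fix a disk $B \subset \mathbb R^3$ supporting the move, so that $\mathcal B$ and $\mathcal B'$ agree outside $B$. Given an $(X,\rho)$-coloring of $\mathcal B$, I choose a representative that assigns normal orientations and quandle elements to every semi-sheet; the coloring outside $B$ is then transferred to $\mathcal B'$ verbatim, and inside $B$ the assignment on the new semi-sheets is forced (up to basic inversions) by the coloring conditions $x_1 \ast x_3 = x_2$ and $x_3 = x_4$ along each double point curve. The verification that such an extension exists and is unique modulo basic inversions is a local calculation at each Roseman move: one checks, move by move, that the relations forced on the inside of $B$ are consistent with those on $\partial B$. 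Independence of the chosen representative follows because basic inversions on $\mathcal B$ extend to basic inversions on $\mathcal B'$ (and vice versa), using the two defining identities $\rho(x \ast y) = \rho(x) \ast y$ and $x \ast \rho(y) = x \ast \bar y$ of a good involution; these ensure that reversing the normal orientation of any semi-sheet and replacing $x$ with $\rho(x)$ preserves the coloring conditions. Applying the same construction to $\mathcal B' \to \mathcal B$ gives the inverse, so $\Psi_{\mathcal B, \mathcal B'}$ is a bijection.

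For the $(X,\rho)_Y$ case, on top of the semi-sheet data I also need to transport the region labels in $Y$. The key observation is that each Roseman move only alters a disk $B$, and the complementary regions of $\mathcal B$ and $\mathcal B'$ outside $B$ are in canonical bijection; inside $B$ the labels on the new regions are determined by the boundary labels via the rule $y_1 \ast x = y_2$. Here one needs to use that the action of $G_{(X,\rho)}$ on $Y$ respects both $x \ast y = y^{-1} x y$ and $\rho(x) = x^{-1}$, so that changing a representative by a basic inversion (which replaces $x$ with $\rho(x)$ and reverses the normal orientation) induces a consistent transformation on the region labels. Combined with the semi-sheet bijection above, this produces the required bijection on $(X,\rho)_Y$-colorings.

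The main obstacle is the move-by-move local analysis: there are seven Roseman moves, and for each one must verify that, regardless of the choices of normal orientations made by the chosen representative, the system of coloring equations inside $B$ has a unique solution modulo basic inversions, and that this solution is compatible with the region labels in the shadow case. I expect this to reduce, after a careful bookkeeping of signs and basic inversions, to applications of the quandle axioms $(x \ast y) \ast z = (x \ast z) \ast (y \ast z)$ and the two good-involution axioms, entirely parallel to the oriented case treated in \cite{CJKLS,CKS} but with the added flexibility that any chosen normal orientation is allowed.
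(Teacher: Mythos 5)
The paper does not prove this proposition---it is imported verbatim from \cite{Ka2, KO}---and your Roseman-move argument is exactly the standard proof given in those references: transfer the coloring outside the supporting disk, check move by move that the semi-sheet labels (and, in the shadow case, the region labels) extend uniquely inside modulo basic inversions, and compose over the sequence of moves. Your outline is correct and matches that approach, so there is nothing to flag beyond the deferred local verifications for the seven moves, which go through as you expect using the quandle and good-involution axioms.
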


Let $\mathcal B$ be an unoriented broken surface diagram. Fix an $(X,\rho)_Y$-coloring of $\mathcal B$, say $\tilde{\mathcal C}$. For a triple point $\tau$ of $\mathcal B$, there are eight complementary regions of $\mathcal B$ around $\tau$ (Some of them may be the same). Choose one of them, say $f$, which we call a {\it specified region} for $\tau$, and let $y$ be the label of $f$.

Let $e_1,$ $e_2$ and $e_3$ be the bottom semi-sheet, the middle semi-sheet and the top semi-sheet at $\tau$, respectively, which face the region $f$. By basic inversions, we assume that the normal orientations $n_1$, $n_2$ and $n_3$ of them point from $f$ to the opposite regions. Let $x_1,$ $x_2$ and $x_3$ be the labels of them, respectively. The {\it sign} of $\tau$ with respect to the region $f$ is $+1$ (or $-1$) if the triple of normal orientations $(n_3,n_2,n_1)$ does (or does not) match the orientation of $\mathbb R^3$. Let $\theta\in Z^3_{Q,\rho}(X,A)_Y.$ The {\it symmetric (Boltzman) weight} $B_\theta^{Sym}(\tau,\tilde{\mathcal C})$ of $\tau$ is defined to be $$B_\theta^{Sym}(\tau,\tilde{\mathcal C})=\theta(y,x_1,x_2,x_3)^{\epsilon(\tau)},$$ where $\epsilon(\tau)$ is the sign of $\tau$. See Figure \ref{fig-wei}.

 \begin{figure}[ht]
\begin{center}
\resizebox{0.65\textwidth}{!}{%
  \includegraphics{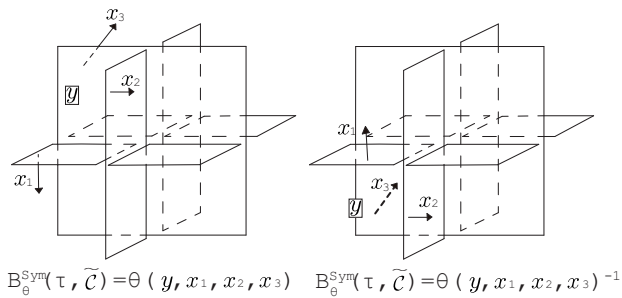} }
\caption{Symmetric Boltzman weights}\label{fig-wei}
\end{center}
\end{figure}

The {\it symmetric partition function} of $\mathcal B$ (associated to  $\theta$) is defined   by 
$$\Phi_\theta^{Sym}(\mathcal B)=\sum_{\tilde{\mathcal C}}\prod_{\tau\in T(\mathcal B)} B_\theta^{Sym}(\tau,\tilde{\mathcal C})\in \mathbb Z[A],$$
where the sum is taken over all possible $(X,\rho)_Y$-colorings $\tilde{\mathcal C}$ of $\mathcal B$. (The value of $B_\theta^{Sym}(\tau,\tilde{\mathcal C})$ is in the coefficient group $A$ written multiplicatively).

\begin{theorem}[\cite{Ka2, KO}]\label{thm-sycocycle}
Let $\mathcal B$ be a broken surface diagram of an unoriented surface-link $\mathcal L$. The symmetric partition function $\Phi_\theta^{Sym}(\mathcal B)$ is an invariant of the unoriented surface-link $\mathcal L$. 
\end{theorem}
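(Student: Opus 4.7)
The plan is to establish Theorem~\ref{thm-sycocycle} by two independent checks. First, that the state-sum $\Phi_\theta^{Sym}(\mathcal B)$ is well defined for a given unoriented broken surface diagram $\mathcal B$, independent of the representative chosen inside each $(X,\rho)_Y$-coloring equivalence class and of the specified region $f$ selected at each triple point $\tau$. Second, that $\Phi_\theta^{Sym}(\mathcal B)$ is invariant under each of the seven Roseman moves, which suffices because two broken surface diagrams of equivalent unoriented surface-links are related by a finite sequence of such moves.

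For well-definedness, I would fix a particular assignment of normal orientations and labels to the semi-sheets and show that the weight $B_\theta^{Sym}(\tau,\tilde{\mathcal C})$ is unchanged when we either (a) perform a basic inversion on one of the three semi-sheets facing $f$, or (b) change $f$ to an adjacent region by crossing one of those three semi-sheets. In each such modification the $4$-tuple $(y,x_1,x_2,x_3)$ changes to one of the form $(y\ast x_j, x_1\ast x_j, \ldots, \rho(x_j),\ldots, x_3)$ for some $j\in\{1,2,3\}$, and simultaneously $\epsilon(\tau)$ flips; the desired equality is then precisely the relation defining $D_n^\rho(X)_Y$ at $n=3$. Since the eight complementary regions at $\tau$ are all connected through sequences of such elementary moves, and since every two representatives of the same $(X,\rho)_Y$-coloring differ by a sequence of basic inversions, $B_\theta^{Sym}(\tau,\tilde{\mathcal C})$ descends to a function on coloring classes.

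For Roseman move invariance I would proceed move by move. The bijection between $(X,\rho)_Y$-coloring sets on the two sides of each move is furnished by the proposition quoted from \cite{Ka2, KO} immediately preceding the theorem. Moves that introduce no new triple points give trivial invariance of the weight product. Moves that create or destroy a pair of triple points produce two weights whose product is the identity of $A$: the two triple points appear with opposite signs, and their $4$-tuples either coincide (in which case the degeneracy relation $D_n^Q$ makes each factor trivial) or agree up to a single basic inversion (absorbed by the $\rho$-relation $D_n^\rho$). The essential case is the tetrahedral Roseman move, which replaces four triple points by four others in a local ball; the equality of the two corresponding products of Boltzman weights is precisely the $3$-cocycle identity $\delta\theta(y,x_1,x_2,x_3,x_4)=1$ in $C^*_{Q,\rho}(X,A)_Y$, read off from the formula for $\partial_4$.

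The principal obstacle will be the orientation bookkeeping at each Roseman move. For an unoriented diagram one may freely place any normal orientation on each semi-sheet, and different choices yield different-looking $4$-tuples at a given triple point; in addition the eight complementary regions around a triple point provide eight possible choices of specified region. The tactic is to use basic inversions to reduce to a canonical normal orientation on the semi-sheets involved and to select matching specified regions across the two sides of the move, so that the coboundary identity $\delta\theta=0$ can be applied directly. The $D_n^\rho$ relations then absorb the discrepancies introduced by the preparatory inversions, while the $D_n^Q$ relations handle any degenerate $4$-tuples in which two consecutive arguments coincide, completing the verification.
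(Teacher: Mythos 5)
The paper does not prove this theorem at all: it is quoted from the references \cite{Ka2, KO}, so there is no internal proof to compare against. Your outline is a correct sketch of the standard argument given in those references --- well-definedness of the weight under change of specified region via the $D_3^\rho$ relations (with the sign flip), triviality of degenerate triple points via $D_3^Q$, cancellation of triple-point pairs, and the $3$-cocycle condition for the tetrahedral move --- the only quibble being that your check (a) is subsumed by the definition itself, since the weight already normalizes the normal orientations to point away from the chosen region.
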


We denote $\Phi_\theta^{Sym}(\mathcal B)$  by $\Phi_\theta^{Sym}(\mathcal L)$ and call it the {\it symmetric quandle cocycle invariant} of $\mathcal L$ associated to $\theta$. 


\section{How to compute symmetric quandle cocycle invariants from marked graph diagrams}\label{sect-syqcocm}

Let $\Gamma$ be a marked graph diagram of an unoriented surface-link $\mathcal L$ and let $(X, \rho)$ be a symmetric quandle.

A {\it semi-arc} of $\Gamma$ is a connected component of $\Gamma \setminus (C(\Gamma) \cup V(\Gamma))$, where $C(\Gamma)$ is the set of crossings and $V(\Gamma)$ is the set of marked vertices of $\Gamma$.  

A {\it basic inversion} is an operation which reverses the normal orientation of a semi-arc and changes the element $x$ assigned to the semi-arc by $\rho(x).$ See Figure \ref{fig-bimgd}. 

 \begin{figure}[ht]
\begin{center}
\resizebox{0.5\textwidth}{!}{%
  \includegraphics{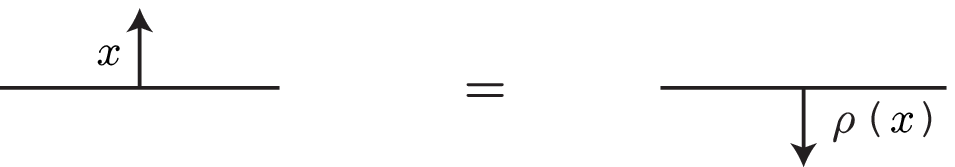} }
\caption{A basic inversion}\label{fig-bimgd}
\end{center}
\end{figure}

We say that an assignment of a normal orientation and an element of $X$ to each semi-arc of $\Gamma$ satisfies the {\it coloring conditions} if it satisfies the following conditions. 
\begin{itemize}

\item For each marked vertex, 
using basic inversions, we assume that normal orientations of semi-arcs are as in Figure~\ref{fig-colmgd}.  
Then $x_1=x_2$.

\item For each crossing, using basic inversions, we assume that normal orientations of semi-arcs are as in Figure~\ref{fig-colmgd}.  Then $x_1\ast x_3=x_2$ and $x_3 =x_4$.  
\end{itemize}

 \begin{figure}[ht]
\begin{center}
\resizebox{0.5\textwidth}{!}{%
  \includegraphics{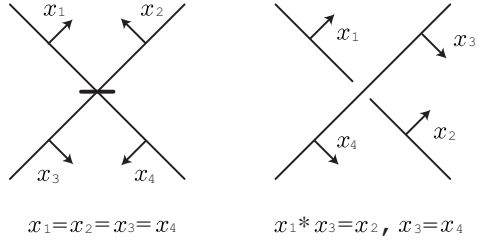} }
\caption{Coloring conditions}\label{fig-colmgd}
\end{center}
\end{figure}


An {\it $(X,\rho)$-coloring} of $\Gamma$ is the equivalence class of an assignment of a normal orientation and an element of $X$ to each semi-arc of $\Gamma$ satisfying the coloring conditions. Here the equivalence relation is generated by basic inversions.

Let $Y$ be an $(X,\rho)$-set. An $(X,\rho)_Y$-coloring of $\Gamma$ is an $(X,\rho)$-coloring with an assignment of an element of $Y$ to each complementary region of $\Gamma$ satisfying the following condition. 

\begin{itemize}
\item[$\bullet$] Suppose that two adjacent regions $f_1$ and $f_2$  separated by a semi-arc $e$ are labeled by $y_1$ and $y_2$. If  the semi-arc $e$ is labeled by $x$ and  the normal orientation of $e$ points from $f_1$ to $f_2$, then $y_1\ast x=y_2.$
\end{itemize}


\begin{theorem}
Let $(X,\rho)$ be a symmetric quandle and let $Y$ be an $(X,\rho)$-set. Let $\Gamma$ be an admissible marked graph diagram, and let $\mathcal B = 
\mathcal B(\Gamma)$ be a broken surface diagram associated with $\Gamma$.  There is a bijection from the set of $(X, \rho)_Y$-colorings of $\mathcal B$ to that of $\Gamma$.  
\end{theorem}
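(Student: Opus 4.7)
The plan is to mimic the proofs of Theorems~\ref{thm-col} and \ref{thm-scol}, adapted to the symmetric setting. I will define a restriction map from the set of $(X,\rho)_Y$-colorings of $\mathcal B = \mathcal B(\Gamma)$ to the set of $(X,\rho)_Y$-colorings of $\Gamma$ by taking the cross-section at level $t=0$, and then show that this map is a bijection by using the explicit layered construction of $\mathcal B(\Gamma)$ from Section~\ref{sect-mgd}.

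First, given an $(X,\rho)_Y$-coloring $\tilde{\mathcal C}$ of $\mathcal B$, I restrict the assignment of normal orientation and $X$-label on each semi-sheet to the level $\mathbb R^3\times\{0\}$. Under the natural embedding $\phi$ pictured in Figure~\ref{fig-ne}, the semi-sheets of $\mathcal B$ meeting this level correspond to semi-arcs of $\Gamma$, so the labels and normal orientations transfer (modulo basic inversions). The $Y$-labels on complementary regions of $\mathcal B$ in $\mathbb R^3$ restrict to $Y$-labels on complementary regions of $\Gamma$ in $\mathbb R^2$, since each planar region is the projection of a thin product slab of a spatial region. The crossing coloring condition on $\Gamma$ is inherited directly from the double-curve condition on $\mathcal B$. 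At each marked vertex $v$, the four incident semi-arcs extend upward (or downward) into the saddle surface that sits just above $t=0$ inside $\mathcal B$; this saddle surface is a single connected semi-sheet carrying one element of $X$ up to basic inversion, so the four semi-arcs receive the same $X$-label after basic inversions, yielding the condition $x_1=x_2$ of Figure~\ref{fig-colmgd}. The region condition at each marked vertex and crossing is immediate from the corresponding condition in $\mathcal B$.

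For bijectivity I exploit the three-tier structure of $\mathcal B(\Gamma)$: (i) the central slab $\mathbb R^3\times[t_1',t_1]$, in which $\mathcal B$ is the product-like piece built from $\Gamma_0$ and the markers and contains no triple points; (ii) the Reidemeister-move slabs above and below, built by the homeomorphisms $F^{(i)}$ and $G^{(j)}$; and (iii) the capping 2-disks bounding the trivial link diagrams $O$ and $O'$ at $t=\pm 2$. A coloring of $\Gamma$ extends uniquely across (i) by the product structure; it propagates uniquely across each Reidemeister-move layer of (ii) by the standard invariance of symmetric quandle colorings under Reidemeister moves (proved in the works of Kamada and Oshiro cited in Section~\ref{sect-syqcoc}); and it extends uniquely across (iii) because each component of $L_+$ (resp.\ $L_-$) is an unknot carrying a single $X$-label that determines the unique label on the bounding disk. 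Analogously, the $Y$-labels on the new complementary regions created inside each slab are forced by the $Y$-action of the symmetric quandle across each separating semi-sheet, so they are determined by the $Y$-labels on $\Gamma$. This yields a well-defined inverse to the restriction map.

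The main obstacle will be verifying the marked vertex condition carefully — tracking basic inversions consistently, I must check that the saddle surface above the marker is indeed a single semi-sheet, so that the four adjacent semi-arcs of $\Gamma$ really do receive equal $X$-labels (and not a label and its $\rho$-image) once their normal orientations are aligned. A secondary technical point is to confirm that when a Reidemeister move creates new complementary regions, the $Y$-labels are uniquely determined by those of $\Gamma$; this follows because every new region is separated from an adjacent old region by exactly one semi-sheet, so the $Y$-action of the symmetric quandle forces its label. Once these two points are checked, the map and its inverse are evident, completing the bijection.
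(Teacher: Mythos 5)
Your proposal is correct and follows essentially the same route as the paper: the paper's proof is a one-line reference to the argument of Theorem~\ref{thm-scol}, which itself takes the $0$-level cross-section and appeals to Ashihara's layered construction of $\mathcal B(\Gamma)$ (\cite{As}) for bijectivity. You have simply written out in full the details (the product slab, the Reidemeister-move slabs, the capping disks, and the marked-vertex condition $x_1=x_2$ coming from the connectedness of the saddle semi-sheet) that the paper delegates to those references.
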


\begin{proof}
By the same argument as in the proof of Theorem~\ref{thm-scol}, we see the result. 
\end{proof}

Let $\Gamma$ be an admissible marked graph diagram. Fix an $(X,\rho)_Y$-coloring of $\Gamma$, say $\tilde{\mathcal C}$. Then both resolutions $\Gamma_+$ and $\Gamma_-$ have induced colorings. 

 \begin{figure}[ht]
\begin{center}
\resizebox{0.7\textwidth}{!}{%
  \includegraphics{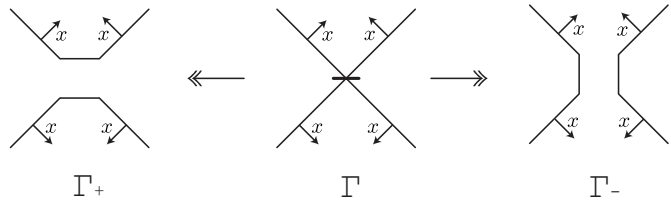} }
\caption{Induced colorings on $\Gamma_+$ and $\Gamma_-$}\label{fig-res}
\end{center}
\end{figure}

Let $\Gamma_+=D_1\rightarrow D_2\rightarrow\cdots\rightarrow D_r=O$ and $\Gamma_-=D_1' \rightarrow D_2'\rightarrow\cdots\rightarrow D_s'=O'$  be sequences of link diagrams as before. Let $i\in I^3_+\amalg I^3_-$ and $f$ the complementary region of $D_i$ in 
$B_{(i)}$ or $B_{(i)}'$ such that $f$ does not intersect with the boundary $\partial B_{(i)}$ or $\partial B_{(i)}'$, respectively. Let $e_1,$ $e_2$ and $e_3$ be the bottom, the middle and the top semi-arcs  facing the region $f$, respectively.  By basic inversions, we assume that the normal orientations $n_1$, $n_2$ and $n_3$ of them point outwards. Let $x_1,$ $x_2$ and $x_3$ be the labels of them, respectively.  Define $\epsilon_{tm}(i)=1$ if $( n_3,n_2 )$ matches with the given (right-handed) orientation of $\mathbb R^2$ and $-1$ otherwise. For a given 3-cocycle $\theta\in Z^3_{Q,\rho}(X,A)_Y,$ we define the {\it symmetric (Boltzman) weight}  at $i$  to be $$B_\theta^{Sym}(i,\tilde{\mathcal C})=\theta(y,x_1,x_2,x_3)^{\epsilon_{tm}(i)}.$$ 

 \begin{figure}[ht]
\begin{center}
\resizebox{0.6\textwidth}{!}{%
  \includegraphics{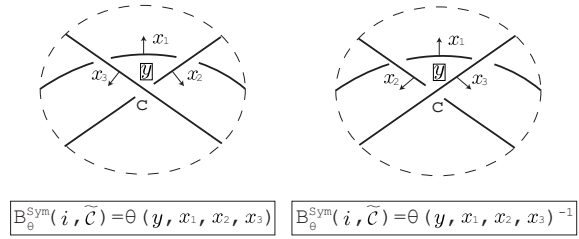} }
\caption{The symmetric (Boltzman) weight at $i\in I^3_+\amalg I^3_-$}\label{fig-weimgd1}
\end{center}
\end{figure}



For a marked graph diagram $\Gamma$ and an $(X,\rho)_Y$-coloring $\tilde{\mathcal C}$, we define the {\it  symmetric partition function}  by $$\Phi_\theta^{Sym}(\Gamma)=\sum_{\tilde{\mathcal C}}\biggl(\prod_{i\in I^3_+ }B^{Sym}_\theta(i,\tilde{\mathcal C})\prod_{j\in I^3_- }B^{Sym}_\theta(j,\tilde{\mathcal C})^{-1}\biggr),$$
where $\tilde{\mathcal C}$ runs over all $(X,\rho)_Y$-colorings of $\Gamma$.

\begin{theorem} \label{thm-cocun}
Let $\mathcal L$ be an unoriented surface-link and let $\Gamma$ be a marked graph diagram presenting  $\mathcal L$. For any  3-cocycle $\theta\in Z^3_{Q,\rho}(X,A)_Y,$ the symmetric partition functions $\Phi_\theta^{Sym}(\Gamma)$ is equal to $\Phi_\theta^{Sym}(\mathcal L)$.
\end{theorem}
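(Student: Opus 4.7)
The plan is to follow the same template as the proofs of Theorem~\ref{thm-cocyclem} and Theorem~\ref{thm-scocyclem}. Setting $\mathcal B = \mathcal B(\Gamma)$, by Theorem~\ref{thm-sycocycle} it is enough to prove that $\Phi_\theta^{Sym}(\Gamma) = \Phi_\theta^{Sym}(\mathcal B)$. The preceding theorem supplies a bijection between the $(X,\rho)_Y$-colorings of $\Gamma$ and those of $\mathcal B$, so this reduces to proving the pointwise identity
\[
\prod_{i\in I^3_+} B^{Sym}_\theta(i,\tilde{\mathcal C}) \prod_{j\in I^3_-} B^{Sym}_\theta(j,\tilde{\mathcal C})^{-1} = \prod_{\tau \in T(\mathcal B)} B^{Sym}_\theta(\tau,\tilde{\mathcal C})
\]
for every $(X,\rho)_Y$-coloring $\tilde{\mathcal C}$. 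As in the proof of Theorem~\ref{thm-cocyclem}, the triple points of $\mathcal B$ decompose as $T(\mathcal B) = \{\tau_i : i\in I^3_+\}\cup\{\tau_j' : j\in I^3_-\}$, one triple point for each type-$R_3$ move, so the task is further reduced to the two local identities $B^{Sym}_\theta(i,\tilde{\mathcal C}) = B^{Sym}_\theta(\tau_i,\tilde{\mathcal C})$ for $i\in I^3_+$ and $B^{Sym}_\theta(j,\tilde{\mathcal C}) = B^{Sym}_\theta(\tau_j',\tilde{\mathcal C})^{-1}$ for $j\in I^3_-$.

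For each $i \in I^3_+$, let $f$ denote the inner complementary region of $D_i$ in $B_{(i)}$ used in the definition of $B^{Sym}_\theta(i,\tilde{\mathcal C})$. Following the description of $M_{(i)}$ in Section~\ref{sect-mgd}, $f$ lifts to a three-dimensional complementary region $\tilde f$ of $\mathcal B$ at levels $t_i < t < t_{i+1}$, and I would use $\tilde f$ as the specified region for the triple point $\tau_i$ in the sense of Section~\ref{sect-syqcoc}. By the compatibility of basic inversions on $\Gamma$ with those on $\mathcal B(\Gamma)$ (a basic inversion of a semi-arc of $\Gamma$ extends in a level-preserving way to a basic inversion of the corresponding semi-sheet of $\mathcal B$), after normalizing so that the normal orientations of the bottom, middle and top semi-arcs facing $f$ all point outward from $f$, the matching normal orientations on the semi-sheets at $\tau_i$ facing $\tilde f$ also point outward from $\tilde f$. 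Consequently the quadruple $(y, x_1, x_2, x_3)$ that enters $B^{Sym}_\theta(i,\tilde{\mathcal C})$ agrees with the one that enters $B^{Sym}_\theta(\tau_i,\tilde{\mathcal C})$.

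It remains to match the signs. In the normalized picture the bottom semi-sheet at $\tau_i$ is the horizontal sheet inherited from the saddle construction (cf.\ Figure~\ref{fig-m3}), and because $\tilde f$ lies above the saddle level $t=0$, its outward normal with respect to $\tilde f$ points in the $+z$ direction of $\mathbb R^3$. Thus the spatial sign $\epsilon(\tau_i)$, determined by whether $(n_3, n_2, n_1)$ matches the right-handed orientation of $\mathbb R^3$, coincides with the planar sign determined by whether $(n_3, n_2)$ matches the orientation of $\mathbb R^2$, which is exactly $\epsilon_{tm}(i)$. Hence $B^{Sym}_\theta(i,\tilde{\mathcal C}) = B^{Sym}_\theta(\tau_i,\tilde{\mathcal C})$. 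For $j \in I^3_-$ the same argument applies, except that $\tilde f$ now lies below the saddle level, so the outward normal of the bottom semi-sheet points in the $-z$ direction, producing the extra sign inversion $B^{Sym}_\theta(j,\tilde{\mathcal C}) = B^{Sym}_\theta(\tau_j',\tilde{\mathcal C})^{-1}$. The principal technical obstacle I expect is the careful tracking of how the basic-inversion equivalence on $\Gamma$ lifts to the one on $\mathcal B(\Gamma)$ through the level-by-level construction of $\mathcal B(\Gamma)$ from $\Gamma$; this is essentially a bookkeeping task, and the cocycle condition $\theta \in Z^3_{Q,\rho}(X,A)_Y$ (in particular, the relations coming from $D^\rho_n$) guarantees that the weights $B^{Sym}_\theta$ are well defined on equivalence classes. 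A secondary point to verify is that the choice of the lift $\tilde f$ as specified region is admissible, which is exactly the geometric input that allows the $\epsilon_b$ factor present in the oriented formula of Theorem~\ref{thm-cocyclem} to be absorbed here into the basic-inversion normalization.
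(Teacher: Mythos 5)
Your proposal is correct and follows essentially the same route as the paper: reduce to $\Phi_\theta^{Sym}(\Gamma)=\Phi_\theta^{Sym}(\mathcal B(\Gamma))$, use the coloring bijection and the decomposition $T(\mathcal B)=\{\tau_i\}\cup\{\tau_j'\}$, and match weights locally by taking the lift of the inner region $f$ as the specified region, which forces the bottom sheet's outward normal into the $\pm t$ direction and yields $\epsilon(\tau_i)=\epsilon_{tm}(i)$ (equivalently, the paper's observation that the outward normalization makes $\epsilon_b(i)=1$ in the oriented-case formula). Your 3D phrasing of the sign argument is just an unpacking of the paper's one-line appeal to the computation in Theorem~\ref{thm-cocyclem}, so there is nothing substantively different to flag.
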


\begin{proof} The proof of this theorem is similar to that of Theorem~\ref{thm-scocyclem}. Let $\mathcal B=\mathcal{B}(\Gamma)$ be the broken surface diagram associated to $\Gamma$, and let $\tilde{\mathcal C}\in{\rm Col}_\theta^{Sym}(\mathcal B)$ be an $(X,\rho)_Y$-coloring of $\mathcal B$.  We denote by the same symbol $\tilde{\mathcal C}$ for the corresponding $(X,\rho)_Y$-coloring of $\Gamma$.  
As the oriented case, the set of triple points is   $T(\mathcal B)=  \{ \tau_i \mid 
i \in I^3_+ \}  \cup \{ \tau_j' \mid j \in {I^3_-} \}$,  
where $\tau_i$ is the  triple point in $M_{(i)}$ for $ i\in I^3_+$ and 
$\tau_j'$ is the  triple point in $M_{(j)}'$  for $j\in{I^3_-}.$
     Let $i\in I^3_+$. Since we choose the normal orientation of the bottom arc such that $\epsilon_b(i)=1$, we have $\epsilon(\tau_i)=\epsilon_{tm}(i)$.   Thus $B_\theta^{Sym}(i,\tilde{\mathcal C})=B_\theta^{Sym}(\tau_i,\tilde{\mathcal C})$.   
     Similarly, for $j\in I^3_-,$  we have $B_\theta^{Sym}(j,\tilde{\mathcal C})=B_\theta^{Sym}(\tau_j',\tilde{\mathcal C})^{-1}$.  
Hence  we have 
$\Phi_\theta^{Sym}(\Gamma) =\Phi_\theta^{Sym}(\mathcal B)$.

\begin{figure}[ht]
\begin{center}
\resizebox{1\textwidth}{!}{%
  \includegraphics{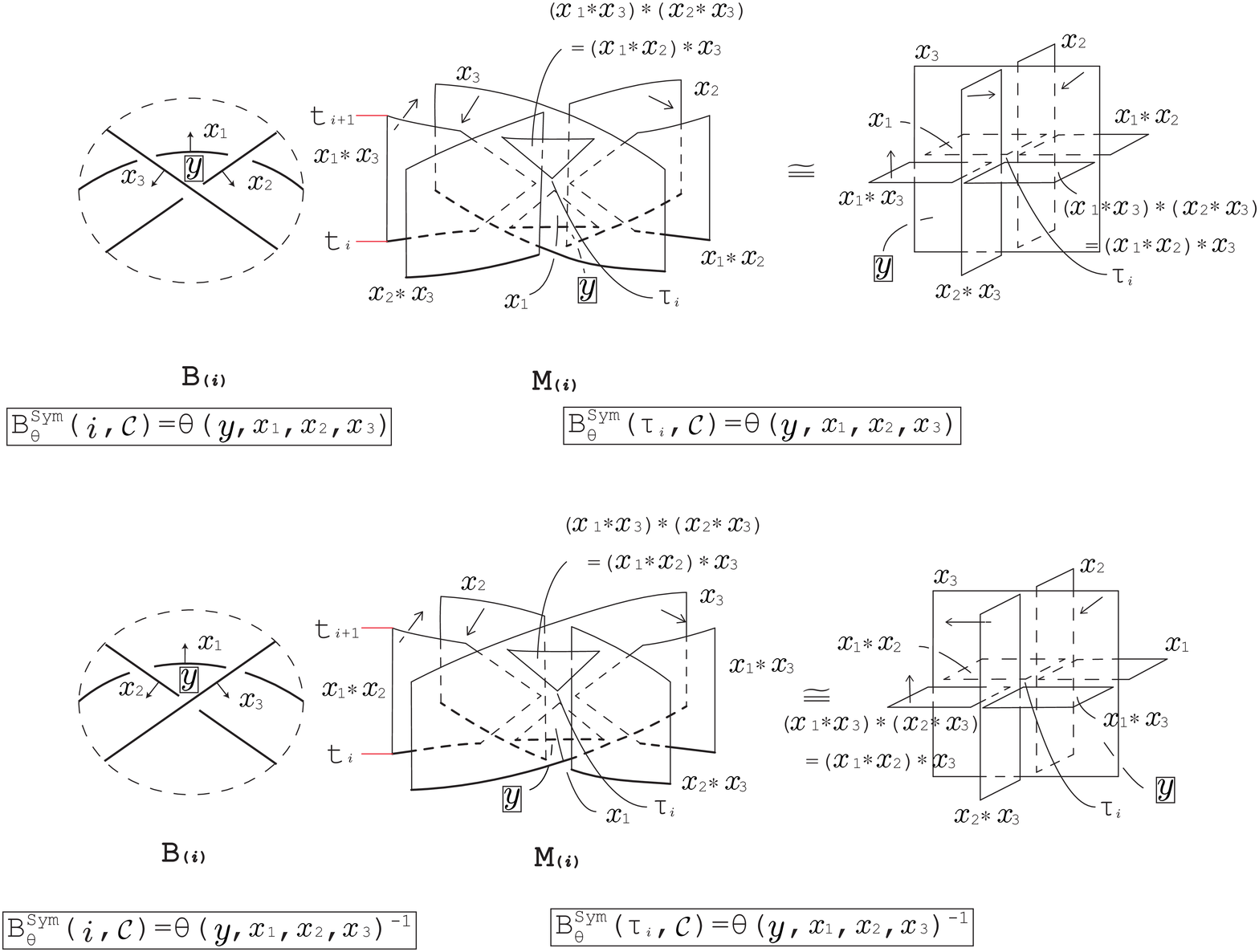} }
\caption{A triple point}\label{fig-tripmgd}
\end{center}
\end{figure}

\end{proof}

\begin{example}\label{ex-81coc}

Let $\Gamma$ be the unorientable marked graph diagram  in Figure \ref{fig-81} representing two component $\mathbb R {\rm P}^2$-link $\mathcal L$ ($\Gamma$ is a marked graph diagram $8_1^{-1,-1}$ in Yoshikawa table \cite{Yo}).

        \begin{figure}[h]
\begin{center}
\resizebox{0.28\textwidth}{!}{%
  \includegraphics{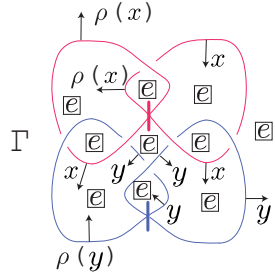} }
\caption{A symmetric coloring for $\Gamma$}\label{fig-81}
\end{center}
\end{figure}
 
   Let $X$ be the dihedral quandle of order 4, in which we rename the elements $0,1,2,3$ by $e_1,e_2,e_1',e_2',$ respectively. Let $\rho:X\rightarrow X$ be the antipodal map, i.e., $\rho(e_i)=e_i'~(i=1,2).$ Let $Y=\{e\},$ which is an $(X,\rho)$-set. Let  \begin{align*}\theta=&{\chi_{e,e_1,e_2,e_1}}\chi_{e,e_1',e_2',e_1}{\chi_{e,e_1',e_2,e_1'}}\chi_{e,e_1,e_2',e_1'}\\
 &{\chi_{e,e_1',e_2,e_1}}^{-1}{\chi_{e,e_1,e_2',e_1}}^{-1}{\chi_{e,e_1,e_2,e_1'}}^{-1}{\chi_{e,e_1',e_2',e_1'}}^{-1}\\
 &{\chi_{e,e_2,e_1,e_2}}^{-1}{\chi_{e,e_2',e_1',e_2}}^{-1}{\chi_{e,e_2',e_1,e_2'}}^{-1}{\chi_{e,e_2,e_1',e_2'}}^{-1}\\
 &\chi_{e,e_2',e_1,e_2}\chi_{e,e_2,e_1',e_2}
\chi_{e,e_2,e_1,e_2'}\chi_{e,e_2',e_1',e_2'}\in Z^3_Q(X;\mathbb Z),
\end{align*}
      where $\chi_{x,y,z,w}(a,b,c,d)=t$ if $(x,y,z,w)=(a,b,c,d),$ $\chi_{x,y,z,w}(a,b,c,d)=1$ otherwise, and $\mathbb Z= \langle t \rangle$ is the infinite cyclic group (cf. \cite[Example 9.3]{KO}).

          Consider sequences of link  diagrams from the positive and negative resolutions to trivial link diagrams as in Figures~\ref{fig-81+} and \ref{fig-81-}, respectively. From those figures, we get $I^3_+=\{2,3,4,6\}$ and ${I^3_-}=\phi.$ The symmetric (Boltzman) weights are 
$B^{Sym}_\theta(2,\mathcal{C})=\theta(e,y,\rho(y),\rho(x))=\theta(e,y,y,\rho(x))^{-1}=1,$  
$B^{Sym}_\theta(3,\mathcal{C})=\theta(e,y,\rho(x),\rho(x))=1,$  
$B^{Sym}_\theta(4,\mathcal{C})=\theta(e,x,y,\rho(x)),$ 
$B^{Sym}_\theta(6,\mathcal{C})\\= \theta(e,\rho(y),x,y)^{-1}$ for $(x,y)\in E,$ where $E=\{(e_1,e_2),(e_1,e_2'),(e_1',e_2),(e_1',e_2'),\\(e_2,e_1),(e_2,e_1'),(e_2',e_1),(e_2',e_1')\}.$
Therefore 
\begin{align*}\Phi_\theta(\mathcal L)&=\sum_{\tilde{\mathcal C}}\biggl(\prod_{i\in I^3_+ }B^{Sym}_\theta(i,\tilde{\mathcal C})\prod_{j\in I^3_- }B^{Sym}_\theta(j,\tilde{\mathcal C})^{-1}\biggr)\\
&=\theta(e,x,y,\rho(x))\theta(e,\rho(y),x,y)^{-1}\\
&=4+2t^2+2t^{-2}.
\end{align*}

 \begin{figure}
\begin{center}
\resizebox{0.88\textwidth}{!}{%
  \includegraphics{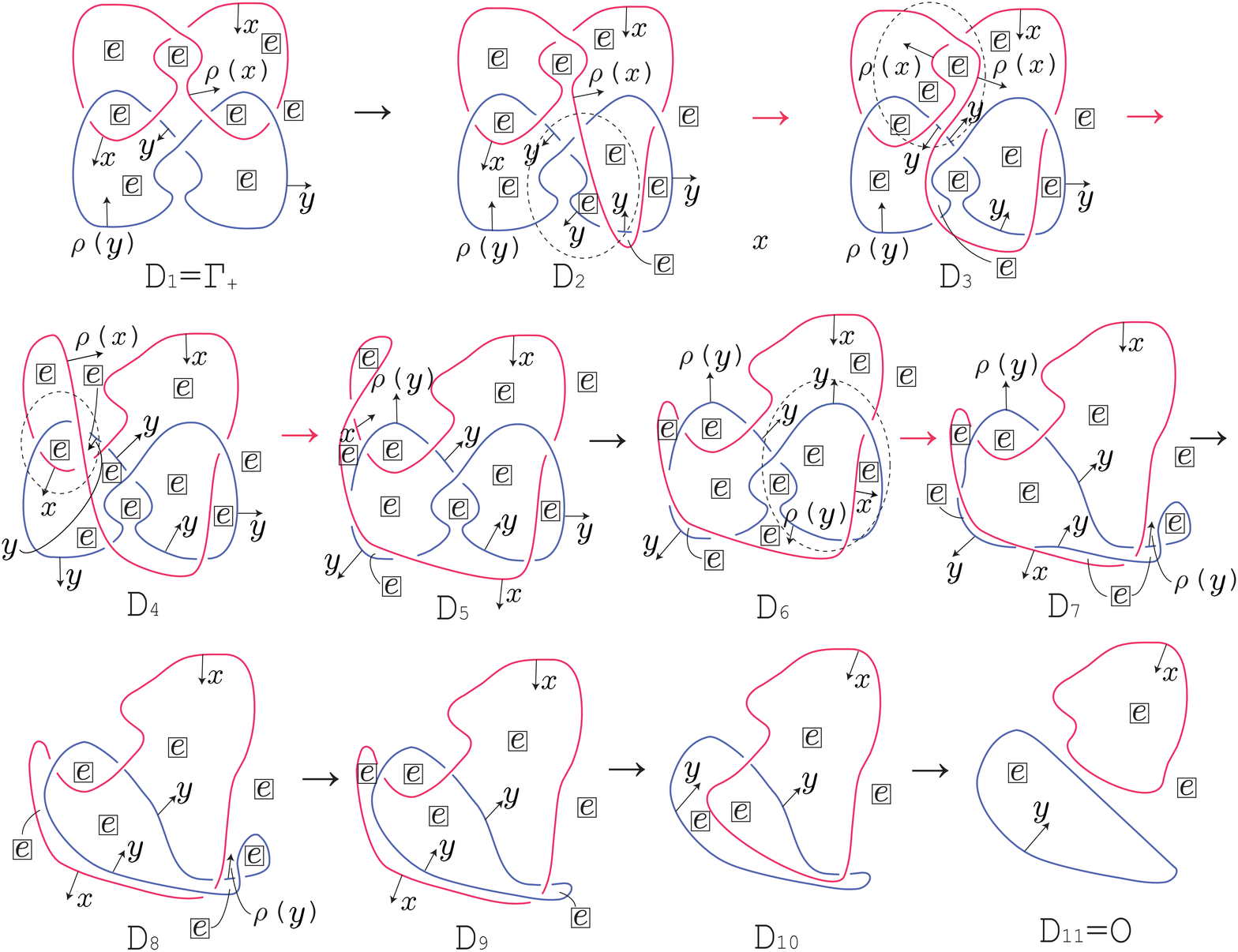} }
\caption{A sequence of link diagrams for $\Gamma_+$}\label{fig-81+}
\end{center}
\begin{center}
\resizebox{0.79\textwidth}{!}{%
  \includegraphics{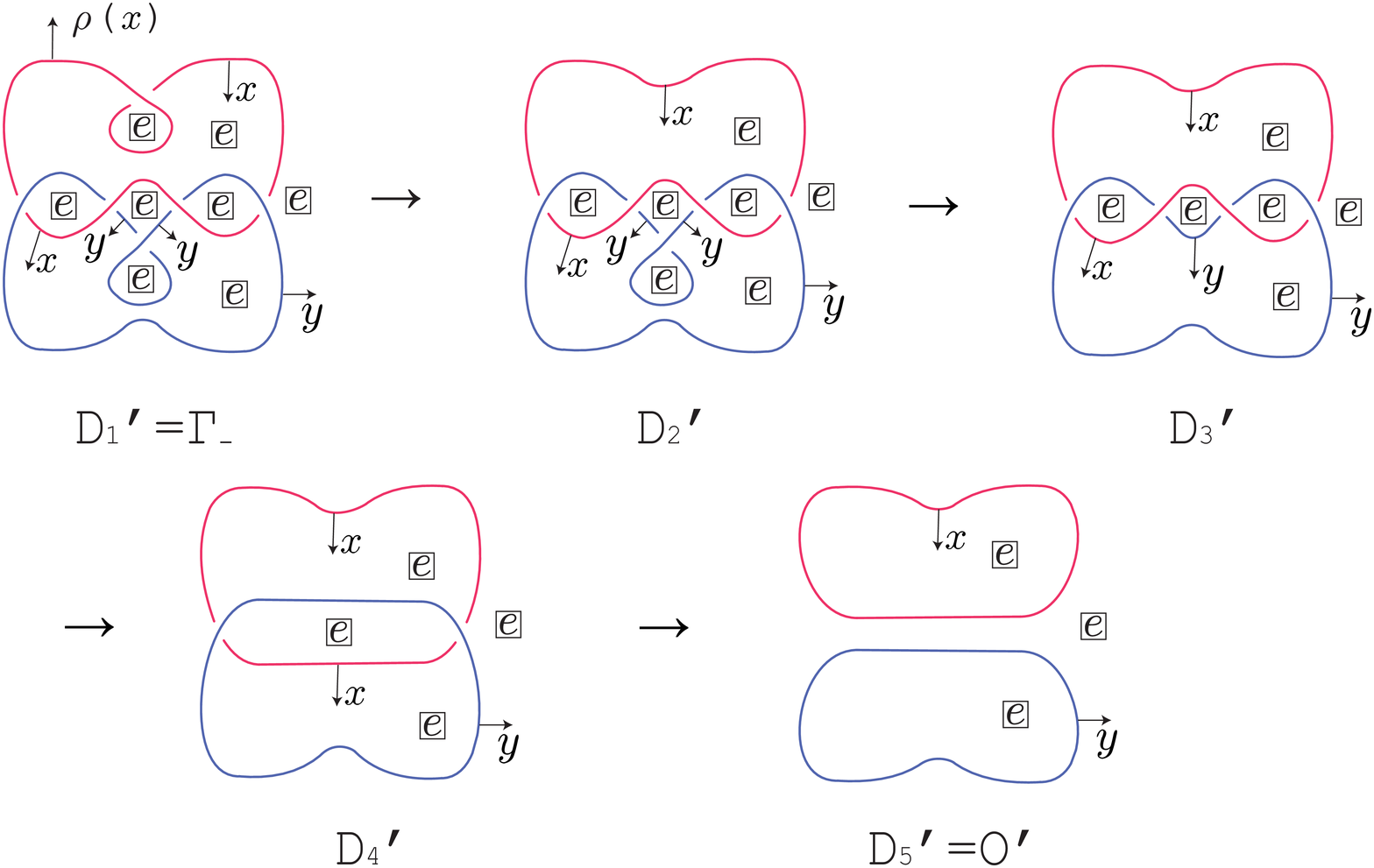} }
\caption{A sequence of link diagrams for $\Gamma_-$}\label{fig-81-}
\end{center}
\end{figure}

\end{example}

{\bf Acknowledgements.}
The first author was supported by JSPS KAKENHI Grant Number 26287013.
The third author was supported by Basic Science Research Program through the National Research Foundation of Korea(NRF) funded by the Ministry of Education, Science and Technology (2013R1A1A2012446).


\end{document}